\titleformat{\section}{\vskip10pt\large\bfseries}{\thesection.}{0.5em}{\centering\vspace{5pt}}
\titleformat{\subsection}{\vskip10pt\normalsize\bfseries}{\thesubsection.}{0.5em}{}
\newtheorem{theorem}{Theorem}
\newtheorem{lemma}[theorem]{Lemma}
\newtheorem{proposition}{Proposition}
\newtheorem{remark}{Remark}
\theoremstyle{definition}
\newtheorem{assumption}[theorem]{Assumption}
\def\R{\mathbb{R}}
\def\E{\mathbb{E}}
\def\T{\mathbb{T}}
\def\I{\mathbb{I}}
\def\D{\mathbb{D}}
\def\C{\mathbb{C}}
\def\P{\mathbb{P}}
\def\B{\mathcal{B}}
\def\F{\mathcal{F}}
\def\L{\mathcal{L}}
\def\DD{\mathcal{D}}
\def\DH{\dot{H}}
\def\d{\mathrm{d}}
\def\n{{\rm n}}
\definecolor{mygreen}{RGB}{34,139,34}
\def\endproof{\qed}
\numberwithin{equation}{section}
\numberwithin{theorem}{section}
\numberwithin{remark}{section}
\numberwithin{proposition}{section}
\numberwithin{exercise}{section}
\begin{document}

\title[]{Optimal analysis of finite element methods \\
for the stochastic Stokes equations}

\author[]{\,\,Buyang Li}
\address{\hspace*{-12pt}Buyang Li and Shu Ma: 
Department of Applied Mathematics, The Hong Kong Polytechnic University,
Hong Kong. {\it E-mail address}: {\tt buyang.li@polyu.edu.hk} and {\tt maisie.ma@connect.polyu.hk}}

\author[]{\,\,Shu Ma}

\author[]{\,\,Weiwei Sun}
\address{\hspace*{-12pt}Weiwei Sun: Division of Science and Technology, BNU-HKBU United International College; 
Advanced Institute of Natural Science, Beijing Normal University at Zhuhai; Zhuhai, P.R. China. {\it E-mail address}: {\tt maweiw@uic.edu.cn }}

\subjclass[2010]{}


\keywords{stochastic Stokes equation, multiplicative noise, Wiener process, semi-implicit Euler scheme, mixed FEM, analytic semigroup, error estimate}

\maketitle

\vspace{-20pt}

\begin{abstract}\noindent 
Numerical analysis for the stochastic Stokes equations is still challenging even though it has been well done for the corresponding deterministic equations. In particular, the pre-existing error estimates of finite element methods for the stochastic Stokes equations { in the $L^\infty(0, T; L^2(\Omega; L^2))$ norm} all suffer from the order reduction with respect to the spatial discretizations. The best convergence result obtained for these fully discrete schemes is only half-order in time and first-order in space, which is not optimal in space in the traditional sense. The objective of this article is to establish strong convergence of $O(\tau^{1/2}+ h^2)$ in the $L^\infty(0, T; L^2(\Omega; L^2))$ norm for approximating the velocity, and strong convergence of $O(\tau^{1/2}+ h)$  in the $L^{\infty}(0, T;L^2(\Omega;L^2))$ norm for approximating the time integral of pressure, where $\tau$ and $h$ denote the temporal step size and spatial mesh size, respectively. The error estimates are of optimal order for the spatial discretization considered in this article (with MINI element), and consistent with the numerical experiments. The analysis is based on the fully discrete Stokes semigroup technique and the corresponding new estimates. 
\end{abstract}

\maketitle

\setlength\abovedisplayskip{4pt}
\setlength\belowdisplayskip{4pt}


\section{\bf Introduction}\label{sec:intr}
We consider the time-dependent stochastic Stokes equations in a 
domain $D \subset \R^d$, $d\in\{2,3\}$, under the stress boundary condition, i.e., 
\begin{equation}
\label{spde}
\left \{
\begin{aligned} 
\d u &= [\nabla \cdot \T (u, p)+ f] \, \d t + B(u) \, \d W(t) && \mbox{in}\,\,\,  D\times (0,T] ,\\
\nabla\cdot u&=0&&\mbox{in}\,\,\,  D\times (0,T] ,\\
\T (u, p) \n&=0 && \mbox{on}\,\,\, \partial D\times (0,T] ,\\
u&=u^0 && \mbox{at}\,\,\, D\times \{0\} , 
\end{aligned}
\right .
\end{equation}
where $u$ and $p$ denote the velocity and the pressure of the fluid, respectively, $f$ is a given source field and {$\n$ denotes the outward unit normal vector on the boundary $\partial D$. }Moreover, 
the stress tensor $\T (u, p)$ is defined by 
\begin{align}
\T (u, p)=2\D(u)-p\I \qquad  \mbox{and}  \qquad \D(u)=\frac12 \big(\nabla u + (\nabla u)^T\big),
\end{align}
where $\I$ denotes
the identity tensor. 
The stochastic noise is determined by an $L^2(D)^d$-valued $Q$-Wiener process $\{ W(t); t  \ge 0 \}$ on a filtered probability space $(\Omega, \mathcal{F}, \mathbb{P}, \{\mathcal{F}_t\}_{t\ge 0})$
with respect to the normal filtration $\{\mathcal{F}_t\}_{t\ge 0}$, and a linear operator $B(u): L^2(D)^d\rightarrow L^2(D)^d$ which depends on the solution nonlinearly.

The numerical approximations of deterministic Navier--Stokes (NS) equations have been well-understood nowadays; see \cite{Heywood-Rannacher-1990, ingram-2013-new,layton-2014-numerical, marion-1998-navier, Nochetto-Pyo-2004, shen-1992-error-1, shen-1992-error-2}. {
For the stochastic NS equations driven by multiplicative non-solenoidal noises, Brze{\'z}niak, Carelli \& Prohl \cite{brzezniak2013finite} proposed practical time-stepping schemes based on the finite element methods (FEMs) and established the convergence for velocity approximation (as a function sequence) to weak martingale solutions in 3D and to strong solutions in 2D, using the compactness argument. 
To obtain convergence rates for space-time discretizations of the stochastic NS equations, a main tool is the localization of the nonlinear term over a probability space of large probability, leading to a convergence rate in probability,  as discussed in \cite{carelli2012rates, breit2021convergence, breitProhl2023error, bessaih2014splitting}.
\begin{enumerate}[label={\rm(\arabic*)},ref=\arabic*,topsep=2pt,itemsep=0pt,partopsep=1pt,parsep=1ex,leftmargin=20pt]

\item[$\bullet$] For the 2D stochastic NS equations with non-solenoidal noises under the periodic boundary condition, Carelli \& Prohl \cite{carelli2012rates} investigated implicit and semi-implicit time discretizations with FEMs, demonstrating a convergence in probability in the $L^\infty(0, T; L^2)$ norm with rate (almost) $1/4$ in time and linear convergence in space for the velocity. 

\item[$\bullet$] For the 2D stochastic NS equations with non-solenoidal noises under the periodic boundary condition, Bessaih, Brze{\'z}niak \& Millet \cite{bessaih2014splitting} studied the convergence of a time-splitting method based on the Lie-Trotter formula. They proved that the speed of the convergence in probability is almost $1/2$ for the velocity approximations, which is shown by means of an $L^2(\Omega,\P)$ convergence localized on a set of arbitrarily large probability.

\item[$\bullet$] For the 2D stochastic NS equations with non-solenoidal noises under the periodic boundary condition, Breit \& Dodgson \cite{breit2021convergence} recently established convergence in probability for the fully discrete implicit FEMs based on a stochastic pressure decomposition technique. They obtained a convergence in probability with rate (almost) $1/2$ in time and linear convergence in space,  measured in the norm of $L^\infty(0, T; L^2) \cap L^2(0, T; H^1)$. This improves the earlier results in \cite{carelli2012rates}, where the convergence rate in time was only (almost) $1/4$.   

\item[$\bullet$] For the 2D stochastic NS equations with solenoidal noises under the Dirichlet boundary condition, Breit \& Prohl \cite{breitProhl2023error} established convergence rates for the fully discrete semi-implicit FEMs using an approach based on discrete stopping times. They showed the convergence of velocity approximations in the $L^\infty(0, T; L^2) \cap L^2(0, T; H^1)$ norm with respect to convergence in probability, achieving the rate (almost) 1/2 in time and linear convergence in space.

\end{enumerate}
In addition to previously discussed convergence in probability, the strong rates of convergence (i.e., rates in $L^2(\Omega)$) for the stochastic NS equations have also been explored, see\cite{bessaih2019strongL2, bessaih2022strong, bessaih2021space}.
\begin{enumerate}[label={\rm(\arabic*)},ref=\arabic*,topsep=2pt,itemsep=0pt,partopsep=1pt,parsep=1ex,leftmargin=20pt]
\item[$\bullet$] The first study on the strong convergence for the 2D stochastic NS equations was conducted by Bessaih \& Millet \cite{bessaih2019strongL2} under periodic boundary conditions.  
They focused on the splitting scheme from Bessaih et al. \cite{bessaih2014splitting} and the implicit Euler schemes used in Carelli \& Prohl \cite{carelli2012rates}.

\item[$\bullet$] Further exploration of strong convergence for the fully discrete schemes of the 2D stochastic NS equations was carried out in \cite{bessaih2021space}. They focused on the implicit Euler scheme coupled with FEMs for non-solenoidal noises under periodic boundary conditions.  
This research refines previous results in the stochastic NS equations, which had only established the convergence in probability of these fully discrete numerical approximations.
\end{enumerate}

The stochastic Stokes system \eqref{spde}  is a simplified version of the stochastic NS equations with non-solenoidal noises. Most of the numerical analyses discussed above can be applied to the 2D stochastic Stokes equation. However, the convergence of pressure was not provided for the stochastic NS equations. Studies on the convergence of velocity approximations in 3D and pressure approximations for the stochastic Stokes equations have emerged only recently.
\begin{enumerate}[label={\rm(\arabic*)},ref=\arabic*,topsep=2pt,itemsep=0pt,partopsep=1pt,parsep=1ex,leftmargin=20pt]
\item[$\bullet$] For the 2D and 3D  stochastic Stokes equations with non-solenoidal noises under the periodic boundary condition, Feng \& Qiu \cite{fengQiu2021analysis} developed the fully discrete semi-implicit mixed FEMs and established strong convergence with rates for both velocity approximation in the norm of $L^\infty(0, T; L^2(\Omega; L^2))$
and pressure approximation in a time-averaged norm.  The error estimates provided in \cite{fengQiu2021analysis} were derived based on a $\tau$-dependent stability of the pressure approximations, as discussed in \cite[Lemma 2]{fengQiu2021analysis}, leading to a sub-optimal error estimate of order $O(\tau^{\frac12} + h\tau^{-\frac12})$.  This $\tau$-dependent stability of pressure approximations can be avoided in the case of solenoidal noises (i.e., $B(u)$ maps $L^2(D)^d$ into its divergence-free subspace, as considered in \cite{carelli2012time}) or pointwise divergence-free FEMs, as discussed in \cite{carelli2012rates}. 
The convergence order was improved to $O(\tau^{\frac12} + h + h^2\tau^{-\frac12})$ in Feng \& Vo \cite{fengVo2022analysis} based on the Chorin-type projection methods.
However, all spatial error constants presented in \cite{fengQiu2021analysis, fengVo2022analysis}  include a bad growth factor $\tau^{-\frac12}$.



\item[$\bullet$] Recently, for 2D and 3D non-solenoidal noises under the periodic boundary condition,  Feng,  Prohl \& Vo \cite{fengProhl2021optimally} proposed new fully discrete mixed FEMs for the stochastic Stokes equations by utilizing the Helmholtz decomposition to the noises. By removing a gradient part from the non-solenoidal noise, the modified noise becomes divergence-free. 
This modification results in an improved stability estimate of the new pressure approximations, which is not dependent on the temporal stepsize $\tau$.  
Consequently, the inf-sup stable mixed FEMs for the stochastic Stokes equation proposed in \cite{fengProhl2021optimally} achieve strong convergence with rate $O(\tau^{\frac12} + h)$ for velocity in the $L^\infty(0, T; L^2(\Omega; L^2))$ norm and pressure in a time-averaged norm. This improvement addresses the sub-optimal estimates in \cite{fengQiu2021analysis}.

\end{enumerate}
The numerical analysis in \cite{fengQiu2021analysis, fengProhl2021optimally, fengVo2022analysis} is based on the certain conditions of noise}, which can be viewed as the following Lipschitz continuity and growth conditions in the case that $B$ is an $\L_2^0$-valued function:
\begin{align}\label{ass-BdW-strong}
\|B(v) - B(w)\|_{{\L_2^0(\mathbb{L}^2, \, \mathbb{L}^2)}} \le  C\|v - w\|_{L^2} \quad \mbox{and} \quad \|B(v)\|_{{\L_2^0(\mathbb{L}^2, \, \mathbb{L}^2)}} \le C \big(1 + \|v\|_{L^2} \big) 
\end{align}
for $v, w \in L^2(D)^d$, where $\L_2^0(\mathbb{L}^2,\mathbb{L}^2)$ is the space of Hilbert--Schmidt operators on $\mathbb{L}^2=L^2(D)^d$. 
In the presence of non-solenoidal noises, the half-order temporal convergence shown in the above-mentioned analyses is optimal and consistent with the numerical experiments. However, the first-order spatial convergence in the $L^\infty(0, T; L^2(\Omega; L^2))$ norm is not optimal and inconsistent with the numerical experiments. 

As far as we know, the numerical analysis of the stochastic heat equation has been studied extensively \cite{Cao-Hong-Liu-2017, Mukam-Tambue-2020, Wang-2017, Yan2005} and the second-order convergence in space has been proved. However, for the stochastic Stokes equations, the existing approach applies only to a simple case with a solenoidal noise and a pointwise divergence-free finite element space, for which the error analysis actually reduces to the analysis for an abstract parabolic equation. 
The second-order convergence in space in the norm of $L^\infty(0, T; L^2(\Omega; L^2))$ for \eqref{spde}, under the common setting involving general non-solenoidal noises and frequently used inf-sup stable mixed FEMs,  has not been proved. 
The objective of this article is to address this question under the following noise condition for some $\beta > \frac{d}{2}$: 
\begin{align}\label{ass-BdW-weak}
\left \{
\begin{aligned}
& \|B(v) - B(w)\|_{\L_2^0(\mathbb{L}^2, \, \mathbb{H}^{-1/2})} \le C\|v - w\|_{L^2},\\[4pt]
& \|B(v) - B(w)\|_{\L_2^0(\mathbb{L}^2, \, \mathbb{H}^1)}  \le C\|v - w\|_{H^\beta} \,\,\, \mbox{and} \,\,\, 
\|B(v)\|_{\L_2^0(\mathbb{L}^2, \, \mathbb{H}^1)}  \le C \big(1 + \|v\|_{H^\beta}\big) , 
\end{aligned} \right .
\end{align} 
which also covers many noises that were considered in the literature (e.g., \cite{fengProhl2021optimally, fengVo2022analysis}); see the examples in Remark \ref{remark-ass} and the numerical experiments in Section \ref{sec:num}. The error analyses for the stochastic Stokes equations in the previous articles, based on energy approach, do not yield second-order convergence in space in the $L^\infty(0,T;L^2(\Omega;L^2))$ norm under noise condition \eqref{ass-BdW-weak}, for the same difficulty caused by the low regularity of pressure; see the discussions in \cite{fengQiu2021analysis, fengProhl2021optimally, fengVo2022analysis}. Therefore, a new approach of error analysis needs to be developed to address this difficulty.

In this article, we establish optimal convergence of fully discrete mixed methods with standard inf-sup stable finite element pairs for the stochastic Stokes equations driven by a {non-solenoidal} multiplicative noise under condition \eqref{ass-BdW-weak} and the regularity of the mild solution (see Proposition~\ref{thm-u-stability}). In particular, the strong convergence of $O(\tau^{1/2}+ h^2)$ in the $L^\infty(0, T; L^2(\Omega; L^2))$ norm   is proved for approximating the velocity, and the strong convergence of $O(\tau^{1/2}+ h)$  in the $L^{\infty}(0, T;L^2(\Omega;L^2))$ norm  is proved for approximating the time integral of pressure (see Theorem~\ref{THM:sfem-ferrs}), where $\tau$ and $h$ denote the temporal stepsize and spatial mesh size, respectively. The error estimates are of optimal order for MINI element in the traditional sense and consistent with the numerical experiments. 

The analysis presented in this article is based on the fully discrete Stokes semigroup technique and the corresponding new estimates (refer to Lemma \ref{lem:fem-ferrs} and Remark \ref{remark41}) for general non-solenoidal functions. The regularity of the pressure solution to the stochastic Stokes problem is generally low due to the influence of the non-solenoidal noise. This low regularity of the pressure is a main obstacle in proving second-order convergence in space, as discussed in \cite{fengQiu2021analysis, fengProhl2021optimally}.
We overcome this difficulty by using the fully discrete Stokes semigroup technique to avoid using the $\tau$-dependent estimates of the pressure approximations.
Specifically, we have developed technical estimates in Lemma \ref{lem:fem-ferrs}, which do not require $v$ to be divergence-free (where $v$ represents the noise term in the error estimates, as indicated by  $T_3$ in the error equation \eqref{error-expr}).
This was achieved by proving and utilizing the $H^1$-stability of the orthogonal projection onto the discrete divergence-free finite element subspace (see Subsection \ref{subsec:orth-dicomp-V_h}) and based on the error estimates of the fully-discrete FEMs for the deterministic Stokes problem.
In the previous papers, the semigroup estimates provided in Lemma \ref{lem:fem-ferrs} were only shown and utilized for the abstract parabolic equation, which requires $v$ to be divergence-free and requires the finite element space to be pointwise divergence-free when the results are applied to the Stokes equations. 
This distinction is crucial for our error analysis of the stochastic Stokes equations, and makes it possible to prove a better convergence rate for non-solenoidal noises using non-divergence-free finite elements.

The rest of this article is organized as follows. 
In Section~\ref{sec:pre}, we collect the assumptions and describe a fully discrete method with a standard inf-sup stable FE pair for the stochastic Stokes and then,  present our main theorem. 
In Section~\ref{section:abstract}, we present the abstract formulation of the stochastic Stokes equations under the stress boundary condition and define the mild solution based on the abstract formulation. 
In Section~\ref{sec:est-dis-semigroup}, we present some technical estimates for the discrete semigroup associated to the Stokes operator. The results are used in the error analysis of the fully discrete FEMs for the stochastic problem in Section~\ref{sec:spde-fully-fem}. 
In Section~\ref{sec:num}, we present numerical experiments to support our theoretical analysis by illustrating the convergence orders of the velocity and pressure approximations.


\section{\bf Main results}\label{sec:pre}


In this section, we present some notations and assumptions to be used in this article, as well as the numerical scheme for the stochastic Stokes equations. Then we present the main theoretical result on the convergence of the numerical scheme. 

\subsection{Basic notations}
{We assume that the domain $D$ exhibits elliptic $H^2$ regularity when considering the deterministic Stokes equations with the stress boundary condition. This assumption implies that solutions $(v, q)$ of the linear Stokes equations
\begin{equation}
\left \{
\begin{aligned}  
v - \nabla \cdot \T(v, q) &= g \qquad &&{\rm in}\, \, D,\\
\nabla \cdot v &= 0 \qquad &&{\rm in}\, \, D,\\
\T(v, q)\n &= 0 \qquad &&{\rm on}\, \, \partial D,
\end{aligned}
\right .
\end{equation}
satisfy the following estimates:
\begin{align} \label{stokes-sta-H2}
\|v\|_{H^2} + \|q\|_{H^1} \le  C\|g\|_{L^2}.
\end{align}
This $H^2$ elliptic regularity estimate holds for Stokes equations in two-dimensional convex polygons under both Dirichlet boundary condition \cite{Kellogg-Osborn-1976} and Neumann/Stress boundary condition \cite{li2020explicit,Orlt-Sandig-1995}, and three-dimensional convex polyhedron under the Dirichlet boundary condition (see \cite[Eq. (1.8)]{dauge1989stationary}). If the domain is smooth then the $H^2$ elliptic regularity estimate holds for both Dirichlet \cite[Eq. (1.5)]{dauge1989stationary} and Neumann/Stress boundary conditions \cite[Theorem 1.1]{Shibata-Shimizu}. In this paper, we focus on domains on which the  $H^2$ elliptic regularity estimate in \eqref{stokes-sta-H2} holds. 
%
}

Let $H^s(D)$, $s\ge0$, denote the conventional Sobolev space of functions defined on $D$, with $L^2(D)=H^0(D)$, {spaces with blackboard letters (e.g., $\mathbb{H}^s(D) = H^s(D)^d$) represent the spaces of vector valued functions. The dual space 
of $H^s(D)$ is denoted by ${H^{-s}(D)}$.} 
Let $(\Omega, \mathcal{F}, \mathbb{P}, \{\mathcal{F}_t\}_{t\ge 0})$ denote a filtered probability space with the probability measure $\mathbb{P}$, the $\sigma$-algebra $\mathcal{F}$ and the continuous filtration $\{\mathcal{F}_t\}_{t\ge 0}$. The expectation of a random variable $v$ defined on $(\Omega, \mathcal{F}, \mathbb{P}, \{\mathcal{F}_t\}_{t\ge 0})$ is denoted by $\E v$. 


{For a Hilbert space $\mathcal{K}$}, let $W(t)$ be a $\mathcal{K}$-valued $Q$-Wiener process on $(\Omega, \mathcal{F}, \mathbb{P}, \{\mathcal{F}_t\}_{t\ge 0})$, with expression
\begin{align}\label{ass-con-W}
W(t) 
=\,&\sum_\ell \sqrt{\mu_\ell} \phi_\ell W_\ell(t) \qquad \forall \, t \in [0, T],
\end{align}
where $\{W_\ell(t)\}_{j\ge 1}$ is a family of independent real-valued Wiener processes and the trace operator 
$Q : \mathcal{K} \rightarrow \mathcal{K}$
is bounded, self-adjoint, positive semi-definite, with eigenvalues $\{\mu_\ell\}_{\ell \ge 1}$ and eigenfunctions $\{\phi_\ell\}_{\ell \ge 1}$.

{
Let $\L_2(\mathcal{K}, \mathcal{H})$  and $\L_2^0(\mathcal{K}, \mathcal{H})$ be the spaces of Hilbert-Schmidt operators from $\mathcal{K}$ to $\mathcal{H}$ and from $Q^{1/2}(\mathcal{K})$ to $\mathcal{H}$, respectively,
satisfying
$$
\|\Phi\|_{\L_2^0(\mathcal{K},\mathcal{H})}: = \Big( \sum_\ell \mu_\ell\|\Phi \phi_\ell\|_{_\mathcal{H}}^2 \Big)^{\frac12} = \|\Phi Q^{\frac12}\|_{\L_2(\mathcal{K},\mathcal{H})} .
$$
For a progressively measurable process $\Phi: [0, T] \to \L_2^0(\mathcal{K},\mathcal{H})$ 
with }
$\int^T_0 \|\Phi(s)\|_{{\L_2^0(\mathcal{K}, \mathcal{H})}}^2 \, \d s  < \infty$ $\mathbb{P}$-a.s.,
the stochastic integral $\int^t_0 \Phi(s) \, \d W(s)$ is well defined and It\^o's isometry holds, i.e., 
\begin{align}\label{Ito_isometry}
\E \bigg\|\int^t_0 \Phi(s) \, \d W(s)\bigg\|_{{\mathcal{H}}}^2 =
\E \int^t_0 \|\Phi(s)\|_{{\L_2^0(\mathcal{K},\mathcal{H})}}^2 \, \d s.
\end{align}

For the simplicity of notations, {we denote by $\L_2^0 = \L_2^0(\mathbb{L}^2, \, \mathbb{L}^2)$}  and  denote by $x \lesssim y$ 
(or $y \gtrsim x$) the
statement ``$x \le Cy$ (or $x \ge Cy$) for some positive constant $C$ which is independent of the stepsize $\tau$ and the mesh size $h$ in the numerical approximation".

\subsection{Assumptions on the noise and nonlinearity}

For the existence and uniqueness of mild solutions to problem \eqref{spde}, as well as the numerical approximation to the mild solutions, we work with the following assumptions on the noise and nonlinearity. 

\begin{assumption}{(Stochastic noise)}\label{ass-W}
We assume that the $Q$-Wiener process $W(t)$ has the following property: 
\begin{align}\label{ass-con-AQ}
\|(-\Delta)^{\frac 12}\|_{\L_0^2} \lesssim 1,
\end{align}
where $\Delta: H^2_N(D)\rightarrow L^2(D)$ denotes the Neumann Laplacian operator with the domain 
$$
H^2_N(D)=\{v\in H^2(D):\partial_nv=0\,\,\mbox{on}\,\,\partial D\} ,
$$ 
and $(-\Delta)^{\frac12}:H^1(D)\rightarrow L^2(D)$ denotes the fractional power of $-\Delta$. 
\begin{remark}\upshape 
In the case that $Q$ and  $-\Delta$ have the same eigenfunctions, the condition \eqref{ass-con-AQ} is equivalent to 
\begin{align*}
\sum_\ell \mu_\ell\lambda_\ell \lesssim 1,
\end{align*}
where $\lambda_\ell$ is an eigenvalue of  $-\Delta$.
\end{remark}
\end{assumption}

\begin{assumption}{(Nonlinearity and source term)}\label{ass-B}
We assume that $B(v):L^2(D)^d\rightarrow L^2(D)^d$ is a bounded nonlinear operator for any $v\in L^2(D)^d$, 
{satisfying the following Lipschitz continuity and growth conditions for some $\beta\in(\frac{d}{2},2)$: 
\begin{align}\label{ass-con-nosie-1}
& \|B(v) - B(w)\|_{\L_2^0(\mathbb{L}^2, \, \mathbb{H}^{-1/2})} \lesssim\|v - w\|_{L^2},\\ 
\label{ass-con-nosie-2}
& \|B(v) - B(w)\|_{\L_2^0(\mathbb{L}^2, \, \mathbb{H}^1)} \lesssim\|v - w\|_{H^\beta} \,\,\, \mbox{and} \,\,\, 
\|B(v)\|_{\L_2^0(\mathbb{L}^2, \, \mathbb{H}^1)} \lesssim 1 + \|v\|_{H^\beta} .
\end{align}  
}
Moreover, we assume that the function $f: [0, T] \times L^2(D)^d \to L^2(D)^d$ satisfies 
\begin{align}\label{ass-con-f-sta}
\|f(t)\|_{L^2} &\lesssim  1 &&\forall\,\, 0\le t\le T,&\\
\label{ass-con-f-lip}
\|f(t_1) - f(t_2)\|_{L^2} &\lesssim  (t_1 - t_2)^{\frac12} &&\forall\,\, 0\le t_1 \le t_2 \le T.&
\end{align}    
\end{assumption}

\begin{remark}\label{remark-ass}
\upshape 
The conditions in \eqref{ass-con-nosie-1}-\eqref{ass-con-nosie-2} are satisfied if $B(v)$ satisfies the following estimates:
\begin{align}\label{ass-con-B1}
\|B(v) \phi_\ell \|_{H^1} &\lesssim {(1 + \|v\|_{H^\beta})} \|\phi_\ell\|_{H^1} &&\forall\, \, v \in H^\beta(D)^d, \\
\label{ass-con-B2}
\|(B(v_1) - B(v_2))\phi_\ell \|_{H^{-\frac12}}  &\lesssim \|v_1-v_2\|_{L^2} \|\phi_\ell\|_{H^1} 
&&\forall\, \, v_1,v_2 \in L^2(D)^d ,\\
\label{ass-con-B3}
\|(B(v_1) - B(v_2))\phi_\ell \|_{H^1}  &\lesssim \|v_1-v_2\|_{H^\beta} \|\phi_\ell\|_{H^1} 
&&\forall\, \, v_1,v_2 \in H^\beta(D)^d  . 
\end{align}
for some $\beta\in(\frac{d}{2},2)$. 
The proofs in this paper exclusively rely on the use of  \eqref{ass-con-nosie-1}--\eqref{ass-con-nosie-2}.
A 2D example of a suitable operator $B(u)$ and noise $W(t)$,  satisfying \eqref{ass-con-B1}--\eqref{ass-con-B3},  is given by
\begin{align}\label{test-BB}
\begin{aligned}
B(u) &= \left(\begin{array}{cc}\sqrt{u^2_1 +1} &  \sqrt{u^2_1 +1} \\[4pt]  \sqrt{u^2_2 + 1} & \sqrt{u^2_2 + 1} \end{array}\right) \quad \mbox{for} \quad u = (u_1, u_2), \\
W(t, \mathbf{x})
&=  \sum_{\ell_1 = 1}^{\infty}  \sum_{\ell_2 = 1}^{\infty}  \sqrt{\mu_{\ell_1 \ell_2}}
\left(\begin{array}{cc} 
\phi_{\ell_1 \ell_2}(\mathbf{x})  \\[4pt] 
\phi_{\ell_1 \ell_2}(\mathbf{x}) \end{array}\right) 
w_{\ell_1 \ell_2}(t) \qquad \forall \,\, t \in [0, T], 
\end{aligned}
\end{align} 
with 
\begin{align*}
\mu_{\ell_1 \ell_2} & = 
\left \{
\begin{aligned}
&0 \quad &&\mbox{for} \,\, \,(\ell_1, \ell_2) = (0, 0),\\
& (\ell_1^2 + \ell_2^2)^{-(2 + \varepsilon)}\quad &&\mbox{for} \,\, \, (\ell_1,\ell_2)\in \mathbb{Z}^2 /\{(0, 0)\},  \,\,\, \varepsilon = 0.1,
\end{aligned}
\right .\\[5pt]
\phi_{\ell_1 \ell_2}(\mathbf{x}) & = \sin(\ell_1 \pi x_1) \sin(\ell_2 \pi x_2),\quad \ell_1,\ell_2 = 0, 1,2,\dots, 
\end{align*}
which forms an orthonormal basis of $L^2(D)$. The noise term $B(u)\d W$ determined by \eqref{test-BB} and series $W(t)$ can be written as 
\begin{align*}
B(u)\d W(t)
&= \sum_{\ell_1 = 1}^{\infty}  \sum_{\ell_2 = 1}^{\infty}  \sqrt{\mu_{\ell_1 \ell_2}} \left(\begin{array}{cc}\sqrt{u^2_1 +1} \\ \sqrt{u^2_2 +1}  \end{array}\right) \phi_{\ell_1 \ell_2}(\mathbf{x}) \, w_{\ell_1 \ell_2}(t),
\end{align*}
which is non-solenoidal and was used in \cite{fengProhl2021optimally, fengVo2022analysis} to measure the effectiveness of numerical methods for the stochastic Stokes/NS equation. This example of noise satisfies both Assumption \ref{ass-W} and conditions \eqref{ass-con-nosie-1}--\eqref{ass-con-nosie-2} in Assumption \ref{ass-B}. 
\end{remark}

\begin{remark}\upshape 
In the case that $(B(v)\phi_\ell)(x) = b_\ell(v(x))\phi_\ell(x)$ for some functions $b_\ell:\R\rightarrow \R$, $\ell=1,2,\dots$, 
conditions \eqref{ass-con-B1}--\eqref{ass-con-B3} are satisfied if the functions $b_\ell$ are uniformly Lipschitz continuous with respect to $\ell$, i.e., 
\begin{align*}
\begin{aligned}
|b_\ell(\sigma)| &\lesssim 1+|\sigma| &&\forall\,\sigma\in\R, \\
|b_\ell(\sigma_1)-b_\ell(\sigma_2)| &\lesssim |\sigma_1-\sigma_2|  &&\forall\,\sigma_1,\sigma_2\in\R .
\end{aligned}
\end{align*}
\end{remark}

\begin{assumption}{(Initial value)}\label{ass-u0} 
We assume that 
the initial value $u_0 : \Omega \to L^2(D)^d$ is an $\F_0/\B(L^2(D)^d)$-measurable function with $u_0 \in L^2(\Omega,  \DD(A))$.
\end{assumption}







\subsection{The numerical method and its convergence}
\label{section:regularity}

Let {$V_h\times Q_h \subset H^1(D)^d \times L^2(D)$} be a pair of finite element spaces subject to a quasi-uniform triangulation of $D$ with the mesh size $h > 0$, satisfying the following properties (see \cite[Chapter II]{girault1986finite}):
\begin{enumerate}
\item
There exists a projection operator
$\Pi_h: H^1(D)^d \to V_h$, called the Fortin projection, satisfying
\begin{align}
\label{Fortin-Pih-def}
& \quad \big(\nabla \cdot (v -\Pi_h v), q_h\big) = 0  &&\forall \, \,v \in H^1(D)^d\,\,\, \mbox{and}\,\,\, q_h \in Q_h,  \quad\\
\label{Fortin-Pih-conv}
&  \quad \|v - \Pi_h v\|_{H^m}\lesssim h^{1-m}\|v\|_{H^1} &&\forall \, \,v \in H^1(D)^d\,\,\, \mbox{and}\,\,\,m = 0, 1,  \quad\\
\label{Fortin-Pih-H1-sta}
& \quad \|\Pi_h v\|_{H^1}\lesssim \|v \|_{H^1} &&\forall \,\, v \in H^1(D)^d,     \quad
\end{align}

\item
The following approximation properties hold: 
\begin{align} 
\inf_{q_h\in Q_h}\|q- q_h\|_{L^2}
&\lesssim h^m \|q\|_{H^m}\quad \forall\, q \in H^m(D) \,\,\, \mbox{and}\,\,\, m =1, 2 , \\
\label{app-pro-q-H10}
\inf_{q_h\in Q_h\cap H^1_0(D)} \|q- q_h\|_{L^2}
&\lesssim h^m \|q\|_{H^m}\quad \forall\, q \in H^m(D)\cap H^1_0(D) \,\,\, \mbox{and}\,\,\, m =1, 2 .
\end{align}
Therefore, the $L^2$ projection $P_{Q_h}:L^2(D)\rightarrow Q_h$ satisfies the following estimates:  
\begin{align} 
\label{app-pro-q}
&\|q- P_{Q_h}\|_{L^2}
\lesssim h^m \|q\|_{H^m}\quad \forall\, q \in H^m(D) \,\,\, \mbox{and}\,\,\, m =1, 2 ,\\
&\|q - P_{Q_h} q\|_{  H^{-1}} 
= \sup_{\substack{\eta \in H^1(D)\\ \hspace{-20pt}\eta \neq 0}  } \frac{(q - P_{Q_h} q, \eta)}{\|\eta\|_{H^1}} 
= \sup_{\eta \, \in H^1(D)} \frac{(q - P_{Q_h} q, \eta - P_{Q_h} \eta)}{\|\eta\|_{H^1}} \notag\\
&\hspace{189pt} \lesssim h^2\|q\|_{H^1}.
\end{align}

\item 
The following inverse inequality holds 
\begin{align}\label{app-inverse}
\|v_h\|_{H^1} \lesssim h^{-1} \|v_h\|_{L^2} \quad\forall\, v_h \in V_h .
\end{align}

\item
The inf-sup condition holds:  
\begin{align}\label{inf-sup}
\|q_h\|_{L^2}  \lesssim \sup_{\substack{v_h\in V_h\\v_h\neq 0}} \frac{ (q_h,\nabla\cdot v_h)}{\|v_h\|_{H^1}}
\quad\forall\, q_h\in Q_h . 
\end{align}

\end{enumerate}

Several finite element spaces $V_h\times Q_h$ are known to satisfy the properties above, such as the standard inf-sup finite element spaces including the mini element space in \cite{arnold1984stable} and the Taylor--Hood finite element space in \cite{taylor1973numerical}. We assume that the triangulation may contain curved triangles/tetrahedra which fits the boundary exactly in order to avoid making the problem more complicated with additional errors in approximating the boundary. 

The natural function spaces associated to incompressible flow is the divergence-free subspaces of $L^2(D)^d$ and $H^1(D)^d$, defined by 
\begin{align}
\begin{aligned}
X =\{ v\in L^2(D)^d: \nabla\cdot v=0\}
\quad  \mbox{and}  \quad  V=X\cap H^1(D)^d.
\end{aligned}
\end{align}
Let 
$
X_h:= \big\{v_h\in V_h: (\nabla\cdot v_h, q_h) =0,\,\, \forall\,\, q_h\in Q_h \big\} 
$
be a discrete divergence--free subspace of $V_h$, 
and denote by $P_{X_h}: L^2(D)^d \to X_h$ the $L^2$-orthogonal projection onto $X_h$. 
On a uniform partition $t_n=n\tau$, $n=0,1,\dots,N$, of the time interval $[0, T]$ with the stepsize $\tau = T/N$, we consider the following fully discrete 
semi-implicit Euler method for problem \eqref{spde}: 
For the given initial value $u^0_h=P_{X_h}u^0$, find a pair of processes $(u_h^n,p_h^n)\in V_h\times Q_h$, $n=1,\dots,N$, such that the weak formulation 
\begin{align}\label{fully-sFEM-weak}
\left\{\begin{aligned}
(u_h^n,v_h) 
+ 2\tau\big(\D(u_h^n),\D( v_h)\big)
=\,&(u_h^{n-1},v_h) + \tau(p_h^n,\nabla\cdot v_h) + \tau (f(t_n) ,v_h) &&\forall\, v_h\in V_h \\
&+ (B(u_h^{n-1})\Delta W_n ,v_h) ,\\
(\nabla\cdot u_h^n,q_h) =\,&0 &&\forall\, q_h\in Q_h 
\end{aligned}
\right.
\end{align}
holds $\P$-a.s. for all test functions $(v_h, q_h) \in V_h \times Q_h$, where $\Delta W_n: = W(t_{n}) - W(t_{n-1}) $ is a random variable with $N(0, \tau Q)$ distribution.

By choosing $v_h \in X_h$ in \eqref{fully-sFEM-weak}, the  fully discrete method in \eqref{fully-sFEM-weak} can be equivalently written as finding 
{a $X_h$-valued process $u_h^n$},
$n=1,\dots,N$, such that $\P$-a.s.
\begin{align}\label{fully-sFEM-weak-Xh}
\left\{\begin{aligned}
(u_h^n,v_h) 
+ 2\tau\big(\D(u_h^n),\D( v_h)\big)
= \, & (u_h^{n-1},v_h)  + \tau (f(t_n) ,v_h) + (B(u_h^{n-1})\Delta W_n ,v_h)\quad\forall\, v_h\in X_h ,\\
u_h^0 =\, &  P_{X_h}u^0.
\end{aligned}
\right.
\end{align}
If we denote by $A_h:X_h\rightarrow X_h$ the discrete Stokes operator  defined by 
\begin{align}
\label{def-Ah}
(A_hv_h,w_h)= 2\big(\D(v_h),\D (w_h)\big)\qquad\forall\,\, v_h,\, w_h\in X_h.
\end{align}
Then the fully discrete method in \eqref{fully-sFEM-weak-Xh} is equivalent to finding {a $X_h$-valued process $u_h^n$}, $n=1,\dots,N$ such that $\P$-a.s. 
\begin{equation}\label{fully-sFEM}
\left \{
\begin{aligned} 
u_h^n
&=  \bar E_{h, \tau} u_h^{n-1} +  \tau \bar E_{h, \tau} P_{X_h}f(t_n) + \bar E_{h, \tau}P_{X_h}[ B(u_h^{n-1})\Delta W_n], \\
u_h^0&=P_{X_h}u^0,
\end{aligned} \right.
\end{equation} 
where $\bar E_{h, \tau}$ denotes the discrete semigroup in the full discretization defined by 
\begin{align}
\label{def-E_h-tau}
\bar E_{h, \tau} v_h = (I + \tau A_h)^{-1}v_h .
\end{align}

The main result of this article is the following theorem, which provides the convergence of the numerical solution to the mild solution of the stochastic Stokes equations.
\begin{theorem}\label{THM:sfem-ferrs}
{
Let $\{ W(t); t  \ge 0 \}$ be a $Q$-Wiener process  on the filtered probability space $(\Omega, \mathcal{F}, \mathbb{P}, \{\mathcal{F}_t\}_{t\ge 0})$}.
Let Assumptions \ref{ass-W}--\ref{ass-u0} be fulfilled and assume that the finite element space $V_h\times Q_h$ has Properties (1)--(4). Then the numerical solution $(u_h^n,p_h^n)$, $n=1,\dots,N$, determined by \eqref{fully-sFEM-weak} has the following error bounds: 
\begin{align}\label{sfem-ferrs:u}
\max_{1\le n \le N}\Big(\E \|u(t_n)-u_h^n \|_{L^2}^2\Big)^{\frac12}
&\lesssim 
\tau^{\frac12} + h^2 , \\[5pt] 
\label{sfem-ferrs:p}
{\max_{1\le m \le N}} \Big(\E \Big\|\int_0^{t_m} p(s)  \,\d s- \tau\sum_{n = 1}^m  p_h^n \Big\|_{L^2}^2 \Big)^{\frac12}
&\lesssim \tau^{\frac 12} + h \, . 
\end{align}
\end{theorem}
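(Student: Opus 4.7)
The plan is to compare a discrete Duhamel representation of $u_h^n$ with the mild solution of \eqref{spde}. Iterating the one-step recursion \eqref{fully-sFEM} yields
\begin{align*}
u_h^n = \bar E_{h,\tau}^n P_{X_h} u^0 + \sum_{k=1}^n \tau \bar E_{h,\tau}^{n-k+1} P_{X_h} f(t_k) + \sum_{k=1}^n \bar E_{h,\tau}^{n-k+1} P_{X_h} B(u_h^{k-1}) \Delta W_k,
\end{align*}
whereas the mild solution introduced in Section \ref{section:abstract} reads
\begin{align*}
u(t_n) = E(t_n) u^0 + \int_0^{t_n} E(t_n-s)\mathcal{P} f(s) \, \d s + \int_0^{t_n} E(t_n-s)\mathcal{P} B(u(s)) \, \d W(s),
\end{align*}
with $E(t)$ the continuous analytic Stokes semigroup and $\mathcal{P}$ the associated Helmholtz projection. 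Subtracting, I would write $u(t_n) - u_h^n = T_1^n + T_2^n + T_3^n$, separating initial-data, source, and stochastic contributions.

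The bounds on $T_1^n$ and $T_2^n$ are deterministic. For $T_1^n$ the fully-discrete semigroup estimate from Lemma \ref{lem:fem-ferrs} applied with $u^0 \in L^2(\Omega,\DD(A))$ (Assumption \ref{ass-u0}) yields $O(h^2)$. For $T_2^n$ I would split into a time-quadrature piece contributing $O(\tau^{1/2})$ via the Hölder regularity \eqref{ass-con-f-lip} of $f$ and a semigroup-discretization piece contributing $O(h^2)$ via Lemma \ref{lem:fem-ferrs} applied to the divergence-free $\mathcal{P} f(t_k)$. The bulk of the work is for $T_3^n$, which I decompose as
\begin{align*}
T_3^n = \sum_{k=1}^n \int_{t_{k-1}}^{t_k} \! \bigl[E(t_n-s)\mathcal{P} - \bar E_{h,\tau}^{n-k+1} P_{X_h}\bigr] B(u(s)) \, \d W(s) + \sum_{k=1}^n \bar E_{h,\tau}^{n-k+1} P_{X_h} \!\! \int_{t_{k-1}}^{t_k} \!\! \bigl[B(u(s)) - B(u_h^{k-1})\bigr] \d W(s).
\end{align*}
On the first sum, It\^o's isometry \eqref{Ito_isometry} reduces the $L^2(\Omega;L^2)$ bound to summing squared Hilbert--Schmidt norms of $(E(t_n-s)\mathcal{P} - \bar E_{h,\tau}^{n-k+1} P_{X_h})B(u(s))$; applying the new semigroup error bound from Lemma \ref{lem:fem-ferrs}, the $H^1$ estimate \eqref{ass-con-nosie-2} on $B$, and the regularity $u \in L^\infty(0,T;L^2(\Omega;H^\beta))$ from Proposition \ref{thm-u-stability} produces $O(\tau + h^4)$. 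On the second sum, It\^o's isometry together with \eqref{ass-con-nosie-1} and the temporal regularity $\E\|u(s)-u(t_{k-1})\|_{L^2}^2 \lesssim \tau$ leads to a bound by $\tau + \sum_{k=1}^n \tau\, \E\|u(t_{k-1}) - u_h^{k-1}\|_{L^2}^2$. A discrete Gronwall argument then closes the velocity estimate \eqref{sfem-ferrs:u}.

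For the pressure estimate \eqref{sfem-ferrs:p}, I would sum the momentum equation in \eqref{fully-sFEM-weak} from $k=1$ to $m$, subtract the weak form of \eqref{spde} integrated on $[0,t_m]$ and tested against $v_h \in V_h$, and isolate the term $\bigl(\int_0^{t_m} p(s)\,\d s - \tau \sum_{k=1}^m p_h^k,\, \nabla\cdot v_h\bigr)$. The remaining pieces are $(u(t_m) - u_h^m, v_h)$, a viscous term controlled by the time-integral of the velocity error, a forcing quadrature error, and a stochastic martingale controlled by the Burkholder--Davis--Gundy inequality with \eqref{ass-con-nosie-1}. The inf-sup condition \eqref{inf-sup} applied to $\tau \sum_k p_h^k - P_{Q_h}\!\int_0^{t_m} p\,\d s$, together with the approximation property \eqref{app-pro-q} of $P_{Q_h}$, yields the target $O(\tau^{1/2}+h)$, the loss of one order in $h$ being inherent to testing against $\nabla\cdot v_h$ and the approximation of $\int_0^{t_m} p$ in $L^2$.

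The main obstacle is the Hilbert--Schmidt bound on the first sum of $T_3^n$. A standard abstract-parabolic argument would require the integrand $B(u(s))\phi_\ell$ to be divergence-free, which is precisely what the non-solenoidal noise setting forbids. The new Lemma \ref{lem:fem-ferrs}, built on the $H^1$-stability of the orthogonal projection onto $X_h$ from Subsection \ref{subsec:orth-dicomp-V_h} and on sharp error estimates for the deterministic fully-discrete Stokes problem with non-solenoidal data, is what permits controlling the gradient part of $B(u)$ without losing powers of $h$; it is the technical core of the argument, and the single ingredient on which the optimal $O(h^2)$ spatial rate in \eqref{sfem-ferrs:u} rests.
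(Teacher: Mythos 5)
Your proposal follows essentially the same route as the paper: iterate the scheme into a discrete Duhamel formula, split the error into initial-data, source and stochastic parts, invoke the non-solenoidal semigroup estimates of Lemma \ref{lem:fem-ferrs} as the key ingredient, close the velocity bound with It\^o's isometry and a generalized Gronwall argument, and obtain the pressure bound from the summed momentum equation plus the inf-sup condition. The only deviations are cosmetic regroupings and two slightly imprecise intermediate claims that do not change the route: because \eqref{ass-con-nosie-1} is a Lipschitz bound only into $\mathbb{H}^{-1/2}$, the Gronwall sum actually carries a singular weight $(t_n-t_i)^{-1/2}$ (handled by the generalized Gronwall lemma, and requiring the discrete smoothing of $\bar E_{h,\tau}^j$), and Lemma \ref{lem:fem-ferrs} applies to a fixed $v$, so $B(u(s))$ must first be frozen at grid points with the difference absorbed via the H\"older continuity of $u$ — exactly the further splitting the paper performs; likewise the pressure viscous term is controlled by the time-integrated $H^1$ semigroup estimate \eqref{lem:fem-ferrs-H1} rather than by the pointwise $L^2$ velocity error.
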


\begin{remark}
{\upshape
Since the numerical scheme in \eqref{fully-sFEM-weak} is linearly implicit, the existence and uniqueness of numerical solutions are standard. The convergence rates presented in the Theorem \ref{THM:sfem-ferrs} 
is optimal in space for the inf-sup stable MINI element space in \cite{arnold1984stable}. 
The half-order convergence in time is the same as the previous results and consistent with the numerical experiments.}
\end{remark}

\begin{remark}
{\upshape
The numerical scheme and analysis presented in this article can be extended to noises of the type $B(s, u)$, provided that certain H\"older continuity conditions of $B(s,u)$ with respect to $s$ are assumed, as stated in Assumption~\ref{ass-B}.
}
\end{remark}

The proof of Theorem \ref{THM:sfem-ferrs} will be presented in the next three sections based on the techniques of continuous and discrete analytic semigroups.



\section{The abstract formulation under the stress boundary condition}
\label{section:abstract}

In this section, we present the abstract formulation and functional setting of the stochastic Stokes equations under the stress boundary condition, and define the mild solution of the stochastic Stokes equations to be approximated by the numerical solutions. 

The natural function spaces associated to incompressible flow is the divergence-free subspaces of $L^2(D)^d$ and $H^1(D)^d$, defined by 
\begin{align}
\begin{aligned}
X =\{ v\in L^2(D)^d: \nabla\cdot v=0\}
\quad  \mbox{and}  \quad  V=X\cap H^1(D)^d.
\end{aligned}
\end{align}
{The divergence-free subspace  $X$ of $L^2(D)^d$ is endowed with the $L^2(D)^d$ norm.}
It is known that the following orthogonal decomposition holds: 
$$L^2(D)^d=X\oplus U\quad\mbox{with}\quad U=\{\nabla q:q\in H^1_0(D)\} . $$
In particular, any $v\in L^2(D)^d$ can be decomposed as $v=P_Xv + \nabla \eta$, where 
$$
P_Xv = v - \nabla \eta
$$
denotes the $L^2$-orthogonal projection from $L^2(D)^d$ onto $X$, with $\eta$ being the solution of the equation  
\begin{align}
\left\{\begin{aligned}
\Delta \eta  &= \nabla\cdot v && \mbox{in}\,\,\,  D,\\
\eta &= 0 && \mbox{on}\,\,\,  \partial D .
\end{aligned}
\right.
\end{align}
Since the solution $\eta$ of the Poisson equation satisfies 
$$
\|\eta\|_{H^{s+1}}\le C\|v\|_{H^{s}} \quad\mbox{for}\,\,\, s\in[0,2],
$$
it follows that the $L^2$ projection $P_Xv = v - \nabla \eta$ has the following properties:
\begin{align}\label{P_X-bound-s}
\|P_X v\|_{H^s} \lesssim \|v\|_{H^s} \quad \mbox{for}\,\,\, s\in[0,2] .
\end{align}
Since the $L^2$ projection operator $P_X:L^2(D)^d\rightarrow X$ is self-adjoint, it follows that 
(by a duality argument)  
\begin{align}\label{P_X-bound}
\|P_X v\|_{H^{-s}} \lesssim \|v\|_{H^{-s}}  \quad \mbox{for}\,\,\, s\in[0,2] .
\end{align}

Let $E(t):X\rightarrow X$ be the semigroup of bounded linear operators defined by $E(t)v^0:=v(t)$ as the solution of the linear Stokes equations 
\begin{align*}
\left\{\begin{aligned}
\frac{\partial v}{\partial t} - \nabla\cdot\T(v,q) &= 0 &&\mbox{in}\,\,\, D\times\R_+\\
\nabla\cdot v&=0 &&\mbox{in}\,\,\, D\times\R_+ \\
T(v,q) n&=0&&\mbox{on}\,\,\,\partial D\times\R_+ \\
v(0)&=v^0 &&\mbox{in}\,\,\, D .
\end{aligned}\right. 
\end{align*}
Let $-A$ be the generator of the semigroup $E(t)$ with domain 
$$
D(A)= \Big\{v^0\in X: \lim_{t\rightarrow 0} \frac{E(t)v^0-v^0}{t} \,\,\,\mbox{exists in $X$} \Big\} ,
$$
which is a dense subspace of $X$. 
Then $v^0\in D(A)$ {if and only if} $\partial_tv\in C([0,T];X)$, which {is equivalent to}
\begin{align} 
\mbox{$\nabla\cdot\T(v,q) \in C([0,T];X) $\, and\, $\nabla\cdot v=0 $}. \notag
\end{align}
The above condition holds {if and only if }
\begin{align} 
&\mbox{$-\Delta v + \nabla q \in C([0,T];X) $\, and\, $\nabla\cdot v=0 $}, \notag\\
&\mbox{$2\D(v)\n-q\n=0$ on $\partial D$}, \notag\\
&\mbox{$q$ is a harmonic function with boundary condition $q=2\D(v)\n\cdot \n$},\label{v0inD(A)1}
\end{align}
which {is equivalent to}
\begin{align} 
&v\in C([0,T];H^2(D)^d),\,\,\,\mbox{$\nabla\cdot v=0$\, and\, $\D(v)\n \times \n =0$\, on\, $\partial D$} , \label{v0inD(A)2}
\end{align}
where the last equivalence relation follows from the $H^2$ regularity of the stationary Stokes equations. 
Since $v^0\in D(A)$ {if and only if } $Av\in C([0,T];X)$,
it follows from \eqref{v0inD(A)2} that  
$$
D(A) = \DH^2(D)^d 
:=  \big\{\mbox{$v\in H^2(D)^d:  \nabla\cdot v=0$\, and\, $\D(v)\n \times \n =0$ on $\partial D$} \big\} .
$$
Moreover, from \eqref{v0inD(A)1} we see that the operator $A:D(A)\rightarrow X$ can be written as  
\begin{align}\label{A-operator}
Av= - \nabla \cdot \T (v, q_v)  
= - \Delta v + \nabla q_v ,
\end{align}
where $q_v$ is determined by $v$ through the following equation:   
\begin{align}\label{def-Domain-p}
\left\{\begin{aligned}
\Delta q_v &= 0 && \mbox{in}\,\,\,  D,\\
q_v &= 2\D(v)\n \cdot \n && \mbox{on}\,\,\,  \partial D .
\end{aligned}
\right.
\end{align}

For $v\in D(A)$, testing \eqref{A-operator} with $w$ and using integration by parts, we obtain by utilizing the boundary condition $q_v = 2\D(v)\n \cdot \n$ and $\D(v)\n \times \n =0$ (which imply $2\D(v)\n -q_v\n=0$ on $\partial D$) 
\begin{align}\label{Apz-operator}
\big(Av, w\big) = \big(2\D(v), \D(w)\big) \quad \forall\,\, w \in V. 
\end{align}
Therefore, the Stokes operator $A:D(A)\rightarrow X$ has an extension $A: V \to V'$ defined by \eqref{Apz-operator}. 

The boundary condition $\T (u, p) \n =0 $ in \eqref{spde} implies that $\D(u)\n\times\n=0$ and $p=2\D(u)\n\cdot \n$, and therefore 
$$
- P_X \T (u, p) =
- \Delta u + P_X\nabla p
= - \Delta u + \nabla (p-\eta) ,
$$
where $\eta$ is the solution of 
\begin{align}
\left\{\begin{aligned}
\Delta \eta  &= \Delta p && \mbox{in}\,\,\,  D\\
\eta &= 0 && \mbox{on}\,\,\,  \partial D .
\end{aligned}
\right.
\end{align}
This implies that 
\begin{align}\label{PX-Tup}
- P_X \T (u, p)  
= - \Delta u + \nabla q_u = Au ,
\end{align}
where $q_u=p-\eta$ is the harmonic function satisfying $q_u=2\D(u)\n\cdot \n$ on $\partial D$. 

As a result, applying $P_X$ to \eqref{spde} yields the following abstract formulation of the stochastic Stokes problem in \eqref{spde}:
\begin{equation}
\label{spde_abstact}
\left \{
\begin{aligned} 
\d u &=  [-Au + P_X f] \, \d t + P_X [B(u) \, \d W]  && \mbox{in}\,\,\,  D\times (0,T] ,\\
u&=u^0 && \mbox{at}\,\,\, D\times \{0\} .
\end{aligned}
\right .
\end{equation}
A predictable process $u\in C([0,T];L^2(\Omega; X))$ is called a mild solution of problem \eqref{spde_abstact} if 
\begin{align}\label{SPDE-mild}
u(t) 
= E(t)u^0 + \int_{0}^{t} E(t-s) P_X f(s)\d s
+ \int_{0}^{t}E(t-s) P_X B(u(s))\, \d W(s) \quad \P\mbox{-a.s.} 
\end{align}

The proof of the following proposition is presented in Appendix A. 

\begin{proposition}{{\rm (Well-posedness and regularity)}} \label{thm-u-stability}
Under Assumptions \ref{ass-W}--\ref{ass-u0}, problem \eqref{spde_abstact} has a unique mild solution in the sense of \eqref{SPDE-mild}. 
Moreover, the mild solution satisfies the following regularity estimates:
\begin{align}\label{SPDE-u-sta}
\sup_{t \in [0,T]}  \E \|u(t)\|_{H^2}^2 
&\lesssim \big(1+ \E\|u^0\|_{H^2}^2\big) , \\
\E \|u(t) - u(s)\|_{H^1}^2 &\lesssim \big(1+ \E\|u^0\|_{H^2}^2\big)(t - s)
&&\forall\, 0\le s\le t\le T
\label{SPDE-u-holder} \\[5pt]
\E \|u(t) - u(s)\|_{H^\beta}^2 &\lesssim  \big(1+ \E\|u^0\|_{H^2}^2\big)(t - s)^{2 - \beta} 
&&\forall\, 0\le s\le t\le T,\,\,\, \forall\, \mbox{$\beta\in(\frac{d}{2},2)$} .
\label{SPDE-u-holder-Hbeta} 
\end{align}
\end{proposition}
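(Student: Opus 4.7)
The plan is to prove Proposition~\ref{thm-u-stability} in three stages: (i) a Banach fixed-point argument in $C([0,T_*];L^2(\Omega;X))$ for the mild formulation \eqref{SPDE-mild}; (ii) derivation of the spatial $H^2$ bound \eqref{SPDE-u-sta}; and (iii) derivation of the temporal H\"older estimates \eqref{SPDE-u-holder}--\eqref{SPDE-u-holder-Hbeta} from the mild representation, using the $H^2$ bound from stage~(ii).

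For well-posedness, let $\Phi$ denote the map defined by the right-hand side of \eqref{SPDE-mild}. The analytic semigroup $\{E(t)\}$ is contractive on $X$, so the initial-data and deterministic terms are controlled trivially. For the stochastic convolution, It\^o's isometry combined with the semigroup smoothing estimate $\|E(t)v\|_{L^2}\lesssim t^{-1/4}\|v\|_{H^{-1/2}}$ (a consequence of the analyticity of $E$) and the Lipschitz condition \eqref{ass-con-nosie-1} gives
\begin{equation*}
\E\Big\|\int_0^t E(t-s)P_X(B(u(s))-B(v(s)))\,\d W(s)\Big\|_{L^2}^2
\lesssim \int_0^t (t-s)^{-1/2}\,\E\|u(s)-v(s)\|_{L^2}^2\,\d s,
\end{equation*}
so that $\Phi$ contracts on $C([0,T_*];L^2(\Omega;X))$ for $T_* \ll 1$, and iterating produces a unique mild solution on $[0,T]$.

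For the $H^2$ estimate, I would apply the Stokes operator $A$ formally to \eqref{SPDE-mild}. The term $\|AE(t)u^0\|_{L^2}\lesssim \|u^0\|_{H^2}$ is immediate from contractivity of $E(t)$ on $X$ and the fact that $A$ commutes with $E(t)$. For the deterministic integral I would use
\begin{equation*}
A\int_0^t E(t-s)P_Xf(s)\,\d s = \int_0^t AE(t-s)P_X(f(s)-f(t))\,\d s + (I-E(t))P_Xf(t),
\end{equation*}
combined with the H\"older bound \eqref{ass-con-f-lip} and $\|AE(r)\|_{\L(X)}\lesssim r^{-1}$, producing an integrable $r^{-1/2}$ singularity. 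For the stochastic integral, It\^o's isometry together with the factorization $AE(t-s) = A^{1/2}E(t-s) A^{1/2}$ and the growth bound \eqref{ass-con-nosie-2} leads to the borderline singularity $(t-s)^{-1}(1+\E\|u(s)\|_{H^\beta}^2)$. I expect this critical estimate to be the main obstacle: handling it requires a refined argument, either via the Da Prato--Kwapie\'n--Zabczyk factorization (splitting $(t-s)^{-1}=(t-s)^{-(1-\alpha)}(t-s)^{-\alpha}$ with $\alpha\in(0,\tfrac12)$ and applying H\"older's inequality in $\omega$), or by exploiting the extra covariance regularity from Assumption~\ref{ass-W}. Because $\beta<2$, the resulting bound on the right-hand side is in the weaker $H^\beta$-norm and is absorbed by the $H^2$-norm on the left via Gronwall's inequality.

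For the temporal H\"older estimates, I would split
\begin{equation*}
u(t)-u(s) = (E(t-s)-I)u(s) + \int_s^t E(t-r)P_Xf(r)\,\d r + \int_s^t E(t-r)P_XB(u(r))\,\d W(r).
\end{equation*}
Using the semigroup-difference identity $(E(\tau)-I)v=-\int_0^\tau AE(r)v\,\d r$ together with $\|A^{\gamma/2}E(r)\|_{\L(X)}\lesssim r^{-\gamma/2}$ yields $\|(E(\tau)-I)v\|_{H^\gamma}\lesssim \tau^{1-\gamma/2}\|v\|_{H^2}$ for $\gamma\in[0,2)$. Combining this with the $H^2$ bound from stage~(ii) handles the first term; the deterministic integral is controlled by boundedness of $E(t)$ on $X$ and $\|f\|_{L^\infty(0,T;L^2)}\lesssim 1$; and for the stochastic integral It\^o's isometry together with $A^{\gamma/2}E(t-r) = A^{1/2}E(t-r)A^{(\gamma-1)/2}$ and the growth bound $\|B(u)\|_{\L_2^0(\mathbb{L}^2,\mathbb{H}^1)}\lesssim 1+\|u\|_{H^\beta}$ produces an integrable $(t-r)^{-(\gamma-1)}$ singularity. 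Choosing $\gamma=1$ yields \eqref{SPDE-u-holder}, and choosing $\gamma=\beta\in(d/2,2)$ yields \eqref{SPDE-u-holder-Hbeta} with the sharp exponent $2-\beta$.
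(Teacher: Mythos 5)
Your overall skeleton (fixed point for the mild formulation, then regularity read off from the mild formula) is the same as the paper's, and your stage (i) and stage (iii) computations are essentially the paper's (the paper runs the contraction on all of $[0,T]$ with a weighted norm $e^{-\lambda t}$ rather than on a short interval, but that is cosmetic). The genuine problem is the order in which you propose to prove the regularity estimates, and it surfaces exactly at the step you yourself flag as "the main obstacle." In your stage (ii) the stochastic convolution produces, after It\^o's isometry and the factorization $AE(t-s)=(I+A)^{\frac12}E(t-s)(I+A)^{\frac12}$, the integrand $(t-s)^{-1}\big(1+\E\|u(s)\|_{H^\beta}^2\big)$, whose kernel is \emph{not} integrable; no Gronwall argument, generalized or otherwise, absorbs a $(t-s)^{-1}$ singularity. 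Neither of your proposed remedies repairs this: the Da Prato--Kwapie\'n--Zabczyk factorization (H\"older in $\omega$ after splitting the kernel) redistributes but does not remove the divergence of $\int_0^t(t-s)^{-1}\,\d s$ in a second-moment bound, and Assumption \ref{ass-W} is already fully spent in obtaining the $\L_2^0(\mathbb{L}^2,\mathbb{H}^1)$ bound \eqref{ass-con-nosie-2}, so there is no extra covariance regularity left to exploit. Moreover, your stage (iii) is built on the $H^2$ bound from stage (ii), so the plan as written is circular at the one point where it is incomplete.

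The paper resolves this by reversing the order. It first proves the self-contained uniform bound $\sup_t\E\|u(t)\|_{H^\beta}^2\lesssim 1+\E\|u^0\|_{H^\beta}^2$ (here the kernel is $(t-s)^{-(\beta-1)}$ with $\beta<2$, which is integrable, so the generalized Gronwall inequality closes). It then proves the temporal H\"older estimates \eqref{SPDE-u-holder}--\eqref{SPDE-u-holder-Hbeta}, which require only this $H^\beta$ bound of the solution together with $u^0\in L^2(\Omega,\DD(A))$ --- not the $H^2$ bound of the solution. Only then does it prove \eqref{SPDE-u-sta}, and the borderline singularity is defused by writing $B(u(s))=[B(u(s))-B(u(t))]+B(u(t))$: the difference term gains the factor $(t-s)^{2-\beta}$ from \eqref{SPDE-u-holder-Hbeta} via \eqref{ass-con-nosie-2}, turning the kernel into the integrable $(t-s)^{1-\beta}$, while the frozen term $B(u(t))$ is handled without any loss by the square-function estimate \eqref{A:sg-sta3} with $\rho=1$ (and the deterministic integral is treated the same way using \eqref{ass-con-f-lip} and \eqref{A:sg-sta4}). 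This reordering and the freeze-at-the-endpoint splitting are the missing ideas in your proposal; without them stage (ii) does not go through. (A secondary point: the paper also notes that these a priori estimates must be justified on a mollified problem before passing to the limit, which your proposal does not address, but that is a matter of rigor rather than of the core argument.)
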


The error estimates for the numerical approximations will be proved based on the regularity results in Proposition \ref{thm-u-stability}. 

Since the mild solution has the regularity $u\in C([0,T]; L^2(\Omega,H^1(D)^d))$, the inf-sup condition \cite[Theorem 4.1]{girault1979finite} implies that there exits $\int_0^t p(s)\d s \in C([0,T]; L^2(\Omega,L^2(D)))$ satisfying following relation: 
\begin{align}\label{spde-weak}
\Big(\int_0^t p(s)\,\d s,\nabla\cdot v\Big) 
=&\, (u(t)-u^0,v)
+   2\int_0^t\big(\D(u(s)), \D(v)\big)\,\d s  \notag \\[5pt] 
&\, - \int_0^t (f(s),v)\,\d s - \Big(\int_0^t B(u(s))\,\d W(s) , v\Big) \quad\forall \, v \in H^{1}(D)^d . 
\end{align}

\section{\bf Estimates for the discrete semigroups} \label{sec:est-dis-semigroup}

In this section, we establish some technical estimates of the discrete analytic semigroup associated to the Stokes operator. The results will be used to prove the optimal-order convergence of the numerical solution in Section \ref{sec:spde-fully-fem}. 

The main results of this section are the following three types of error estimates about approximating the  semigroup $E(t)P_X$ by the discrete semigroup $\bar E_{h, \tau}^nP_{X_h}$, $n=1,\dots,N$, defined in \eqref{def-E_h-tau}. These results are the key to the error analysis for the stochastic Stokes problem.

\begin{lemma}\label{lem:fem-ferrs}For any  $v \in H^1(D)^d$,  there holds 
\begin{align}\label{lem:fem-ferrs-L2-int}
\sum_{j = 1}^{n} \int_{t_{j-1}}^{t_j}\|[ E(s)P_X - \bar E_{h, \tau}^j P_{X_h}] \, v\|_{L^2}^2 \,\d s  \le C(\tau +  h^4) \|v\|_{H^1}^2.
\end{align} 
In addition, for all $v \in L^2(D)^d$,  there holds
\begin{align}\label{lem:fem-ferrs-L2}
\Big\|\sum_{j = 1}^{n} \int_{t_{j-1}}^{t_j} [E(s)P_X - \bar E_{h, \tau}^j P_{X_h}] \, v \,\d s \Big \|_{L^2}^2 \le C(\tau + h^4) \|v\|_{L^2}^2,
\end{align}
and
\begin{align}\label{lem:fem-ferrs-H1}
\Big\|\sum_{j = 1}^{n}  \int_{t_{j-1}}^{t_j} \nabla[E(s)P_X - \bar E_{h, \tau}^j P_{X_h}] \, v\,  \d s \Big \|_{L^2}^2 \le C(\tau + h^2) \|v\|_{L^2}^2.
\end{align} 
\end{lemma}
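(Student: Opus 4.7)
The plan is to reduce the three bounds to classical deterministic FEM estimates for the Stokes semigroup applied to divergence-free data, plus a new duality estimate that controls the non-solenoidal correction introduced by $P_{X_h}$. Using the Helmholtz decomposition I first write $v = P_X v + \nabla\phi$, where $\phi \in H^1_0(D)$ solves $\Delta\phi = \nabla\cdot v$ and hence $\|\phi\|_{H^{s+1}} \lesssim \|v\|_{H^s}$ for $s\in\{0,1\}$ by the $H^2$-regularity of the Poisson problem on $D$. Since $P_X\nabla\phi = 0$ while $P_{X_h}\nabla\phi$ is generally nonzero, the error splits as
\begin{equation*}
\bigl[E(s)P_X - \bar E_{h,\tau}^j P_{X_h}\bigr]v
= \bigl[E(s) - \bar E_{h,\tau}^j P_{X_h}\bigr]P_X v \;-\; \bar E_{h,\tau}^j P_{X_h}\nabla\phi,
\end{equation*}
and the two pieces are treated separately.

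For the first piece, since $\|P_X v\|_{H^s}\lesssim\|v\|_{H^s}$ by \eqref{P_X-bound-s}, I would use the further splitting
\begin{equation*}
E(s) - \bar E_{h,\tau}^j P_{X_h}
= \bigl(E(s) - E(t_j)\bigr) + \bigl(E(t_j) - \bar E_{h,\tau}^j P_{X_h}\bigr),
\end{equation*}
where the first summand is controlled by the analytic-semigroup smoothing $\|AE(\sigma)w\|_{L^2}\lesssim\sigma^{-1/2}\|w\|_{H^1}$ and the second by the standard $O(\tau/t_j + h^2/t_j)$ error of the semi-implicit mixed FEM for the deterministic Stokes problem with divergence-free data. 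After squaring, integrating on $(t_{j-1},t_j)$ and summing in $j$, these deliver the $(\tau + h^4)\|v\|_{H^1}^2$ contribution to \eqref{lem:fem-ferrs-L2-int}. For \eqref{lem:fem-ferrs-L2} and \eqref{lem:fem-ferrs-H1} I recognise the summed integral as $A^{-1}(I - E(t_n))P_X v - A_h^{-1}(I - \bar E_{h,\tau}^n)P_{X_h}P_X v$, which is smoother by one order of $A$; this extra smoothing allows the data to live only in $L^2$ for \eqref{lem:fem-ferrs-L2}, and using the inverse inequality \eqref{app-inverse} costs one power of $h$ on the way to the $H^1$ version \eqref{lem:fem-ferrs-H1}.

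The heart of the argument is the gradient correction $\bar E_{h,\tau}^j P_{X_h}\nabla\phi$. Using self-adjointness of $P_{X_h}$ and the fact that $P_{X_h}\chi\in X_h$ is discretely divergence-free, for every $\chi\in H^1(D)^d$ and every $q_h\in Q_h\cap H^1_0(D)$,
\begin{equation*}
(P_{X_h}\nabla\phi,\chi) = (\nabla\phi,P_{X_h}\chi) = -(\phi,\nabla\cdot P_{X_h}\chi) = -(\phi - q_h,\nabla\cdot P_{X_h}\chi).
\end{equation*}
Choosing $q_h$ as the best $L^2$-approximation of $\phi$ in $Q_h\cap H^1_0(D)$ and combining \eqref{app-pro-q-H10} with the $H^1$-stability of $P_{X_h}$ from Subsection \ref{subsec:orth-dicomp-V_h} yields $\|P_{X_h}\nabla\phi\|_{H^{-1}}\lesssim h^2\|v\|_{H^1}$; taking $\chi = P_{X_h}\nabla\phi$ together with the inverse inequality gives the companion bound $\|P_{X_h}\nabla\phi\|_{L^2}\lesssim h\|v\|_{H^1}$. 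The contribution to \eqref{lem:fem-ferrs-L2-int} and \eqref{lem:fem-ferrs-L2} is then consumed by the discrete maximal-regularity-type identity $\sum_{j=1}^n\tau\|\bar E_{h,\tau}^j w_h\|_{L^2}^2\lesssim\|w_h\|_{H^{-1}}^2$ (which follows from $\tau\sum_j\bar E_{h,\tau}^{2j} = A_h^{-1}(2I+\tau A_h)^{-1}$), while \eqref{lem:fem-ferrs-H1} uses the $H^1$ analogue obtained by inserting $A_h^{1/2}$.

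The main obstacle I expect is the quantitative handling of the divergence-free piece: the $O((\tau + h^2)/t_j)$ estimate for the fully discrete Stokes semigroup requires a careful combination of an Aubin--Nitsche-type duality argument (for the $h^2$ spatial order) and a rational-approximation analysis (for the $\tau$ temporal order) within the analytic-semigroup framework, with weights in $t_j$ that are just integrable once squared and summed. Once that estimate is in place, the gradient correction is handled by the clean duality identity above, and the discrete maximal-regularity calculation bundles everything together without spurious logarithmic factors, producing the asserted $(\tau+h^4)$ and $(\tau+h^2)$ rates.
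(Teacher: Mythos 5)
Your overall architecture (Helmholtz split $v=P_Xv+\nabla\phi$, duality for the gradient correction via the discrete divergence-free property of $X_h$, and a discrete maximal-regularity bound $\tau\sum_j\|\bar E_{h,\tau}^j w_h\|_{L^2}^2\lesssim\|(I+A_h)^{-1/2}w_h\|_{L^2}^2$) is genuinely close to the paper's: your identity $(P_{X_h}\nabla\phi,\chi)=-(\phi-q_h,\nabla\cdot P_{X_h}\chi)$ is essentially the paper's key representation $\Phi_h(t)\nabla q=-(I+A_h)^{\frac12}E_h(t)(I+A_h)^{-\frac12}P_{X_h}\nabla(q-q_h)$, and both hinge on the $H^1$-stability of $P_{X_h}$ from Lemma \ref{Lemma:H1PXh}. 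However, there is a genuine gap in your treatment of the divergence-free piece. You propose to control $(E(t_j)-\bar E_{h,\tau}^jP_{X_h})P_Xv$ by a pointwise-in-time estimate of the form $O((\tau+h^2)/t_j)$ (or, with $H^1$ data, $O((\tau+h^2)t_j^{-1/2})$) and then ``square, integrate and sum.'' This cannot produce the asserted $(\tau+h^4)\|v\|_{H^1}^2$: squaring the $t_j^{-1/2}$-weighted bound gives the weight $t_j^{-1}$, whose sum against $\tau$ diverges logarithmically, while squaring the $t_j^{-1}$-weighted nonsmooth-data bound gives $\sum_j\tau\, t_j^{-2}\sim\tau^{-1}$, which reproduces exactly the suboptimal $h^4\tau^{-1}$ factor this lemma is designed to eliminate. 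Your closing claim that the weights are ``just integrable once squared and summed'' is therefore false. The paper circumvents this by proving the time-integrated spatial bound $\int_0^t\|\Phi_h(s)g\|_{L^2}^2\,\d s\lesssim h^4\|g\|_{H^1}^2$ for $g\in X\cap H^1(D)^d$ \emph{directly} through an energy argument (Lemma \ref{lem:FEM-errs-u0}, testing the error equation with $(I+A_h)^{-1}\widetilde e_h$ and handling the pressure remainders $R_q,R_{q_u}$ by a resolvent duality), and by quoting a genuine discrete maximal-regularity result for the temporal error $\tau\sum_j\|\bar\Phi_{h,\tau}^jv\|_{L^2}^2\lesssim\tau\|v\|_{L^2}^2$; neither follows from pointwise-in-time decay.

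Two further points. First, for \eqref{lem:fem-ferrs-H1} you say the inverse inequality ``costs one power of $h$,'' but $h^{-2}(\tau+h^4)=\tau h^{-2}+h^2\neq\tau+h^2$, and the inverse inequality does not apply to the continuous part $\nabla E(s)P_Xv$ at all; the paper instead derives the $H^1$ bound from the resolvent error estimate \eqref{pde-w-err}, which carries the factor $h\|\nabla(w-w_h)\|_{L^2}\lesssim h^2(\|w\|_{H^2}+\|p\|_{H^1})$, together with the gradient version of the discrete maximal-regularity bound. Second, under the stress boundary condition $A$ and $A_h$ have nontrivial kernels ($\lambda_{h,1}=0$), so $A^{-1}$, $A_h^{-1}$ and your identity $\tau\sum_j\bar E_{h,\tau}^{2j}=A_h^{-1}(2I+\tau A_h)^{-1}$ must be replaced by their $(I+A)$-regularized versions, as the paper does throughout; this is a repairable but real defect in the statements as you wrote them.
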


\begin{remark}\upshape\label{remark41}
Note that the error estimates of $E(t)P_X - \bar E_{h, \tau}^n P_{X_h}$ in \cite{thomee2007galerkin} holds for an abstract parabolic equation, including the Stokes equations, but requires the condition $v \in X$. Since we only require $B(u): L^2(D)^d\rightarrow L^2(D)^d$ rather than $B(u): L^2(D)^d\rightarrow X$, we cannot apply the results in Lemma \ref{lem:fem-ferrs-L2-int} to the stochastic Stokes equations under the condition $v\in X$. By dropping this condition in Lemma \ref{lem:fem-ferrs-L2-int}, we manage to prove second-order convergence in space for the numerical solution of the stochastic Stokes equations.  
\end{remark}

In order to prove Lemma \ref{lem:fem-ferrs}, we need to extend the $H^1$-stability of the $L^2$ projection $P_{X_h}$ from $v\in X\cap H^1(D)^d$ to $v\in H^1(D)^d$. This is obtained by characeterizing the orthogonal complement of $X_h$ in $V_h$, as discussed in subsection \ref{subsec:orth-dicomp-V_h}. The proof of Lemma \ref{lem:fem-ferrs} is presented at the end of this section after introducing the orthogonal decomposition and some properties of the discrete semigroup.

\subsection{Orthogonal complement of $X_h$ in $V_h$}\label{subsec:orth-dicomp-V_h}

Let $X_h^{\perp}$ be the orthogonal complement of $X_h$ in $V_h$, i.e., $V_h = X_h \oplus X_h^{\perp}$, namely, any $v_h \in V_h$ has an orthogonal decomposition 
\begin{align}\label{orth-decom-Vh}
v_h = w_h + z_h \quad \mbox{with} \quad w_h \in X_h\,\,\,\mbox{and}\,\,\, z_h \in X_h^{\perp}\,\,\, \mbox{satisfying}\,\,\,  (w_h, z_h) = 0.
\end{align}
This decomposition  {(see \cite{breit2021convergence} for a slightly different presentation)}  is stable in the $H^1$ norm, as shown in the following lemma. 
\begin{lemma}\label{Lemma:H1PXh}
The orthogonal decomposition in \eqref{orth-decom-Vh} is stable in the $H^1$ norm, i.e., 
\begin{align}\label{orth-decom-H1-sta}
\|w_h\|_{H^1} + \|z_h\|_{H^1}\lesssim \|v_h\|_{H^1} \quad \forall \, \,v_h \in V_h. 
\end{align}
\end{lemma}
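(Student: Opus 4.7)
The plan is to prove $\|z_h\|_{H^1} \lesssim \|v_h\|_{H^1}$, from which $\|w_h\|_{H^1} \le \|v_h\|_{H^1} + \|z_h\|_{H^1} \lesssim \|v_h\|_{H^1}$ follows at once. The first step is to introduce the continuous Helmholtz decomposition of $v_h$: let $\eta \in H^1_0(D)$ be the solution of $\Delta \eta = \nabla \cdot v_h$, so that $v_h = P_X v_h + \nabla \eta$ and, by the $H^2$-regularity of the Poisson equation on $D$, $\|\nabla \eta\|_{H^1} \lesssim \|\eta\|_{H^2} \lesssim \|\nabla \cdot v_h\|_{L^2} \lesssim \|v_h\|_{H^1}$. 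The key observation is that $v_h - \Pi_h \nabla \eta$ belongs to $X_h$: for any $q_h \in Q_h$, the Fortin property \eqref{Fortin-Pih-def} together with the identity $\Delta \eta = \nabla \cdot v_h$ yields $(\nabla \cdot (v_h - \Pi_h \nabla \eta), q_h) = (\nabla \cdot v_h, q_h) - (\Delta \eta, q_h) = 0$.

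Consequently $v_h = (v_h - \Pi_h \nabla \eta) + \Pi_h \nabla \eta$ is a decomposition whose first summand already lies in $X_h$. Applying the $L^2$-orthogonal projection $P_{X_h}$ to both sides and using that $P_{X_h}$ is the identity on $X_h$, I obtain $w_h = (v_h - \Pi_h \nabla \eta) + P_{X_h}\Pi_h \nabla \eta$, and therefore
\begin{align*}
z_h = v_h - w_h = \Pi_h \nabla \eta - P_{X_h}\Pi_h \nabla \eta.
\end{align*}
The $H^1$-stability of $\Pi_h$ in \eqref{Fortin-Pih-H1-sta} handles the first piece: $\|\Pi_h \nabla \eta\|_{H^1} \lesssim \|\nabla \eta\|_{H^1} \lesssim \|v_h\|_{H^1}$. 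For the second piece, $P_{X_h}\Pi_h\nabla \eta \in V_h$, so the inverse inequality \eqref{app-inverse} combined with the Fortin approximation $\|\nabla \eta - \Pi_h \nabla \eta\|_{L^2} \lesssim h\|\nabla \eta\|_{H^1}$ from \eqref{Fortin-Pih-conv} reduces matters to the sharp $L^2$-estimate $\|P_{X_h}\nabla \eta\|_{L^2} \lesssim h\|v_h\|_{H^1}$.

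Proving this last estimate is the main obstacle, since the naive bound $\|P_{X_h}\nabla \eta\|_{L^2} \le \|\nabla \eta\|_{L^2} \lesssim \|v_h\|_{L^2}$ is short by exactly one factor of $h$. The extra power of $h$ will be gained by an Aubin--Nitsche style duality argument. Setting $\phi_h := P_{X_h}\nabla \eta \in X_h$, self-adjointness of $P_{X_h}$ and integration by parts---with no boundary contribution, thanks to $\eta|_{\partial D} = 0$---give $\|\phi_h\|_{L^2}^2 = (\phi_h, \nabla \eta) = -(\nabla \cdot \phi_h, \eta)$. Since $\phi_h \in X_h$ is discretely divergence-free against $Q_h$, I am free to subtract any $q_h \in Q_h$ from $\eta$; choosing $q_h = P_{Q_h}\eta$ and invoking the second-order approximation \eqref{app-pro-q} yields $\|\phi_h\|_{L^2}^2 \lesssim \|\phi_h\|_{H^1} \cdot h^2 \|\eta\|_{H^2}$. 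Absorbing one factor of $h^{-1}$ via the inverse inequality \eqref{app-inverse} and using $\|\eta\|_{H^2} \lesssim \|v_h\|_{H^1}$ then delivers $\|\phi_h\|_{L^2} \lesssim h\|v_h\|_{H^1}$, which completes the chain of estimates. The crux is that the Dirichlet boundary condition on $\eta$ eliminates the boundary term, while the discrete divergence-free property of $X_h$ enables the invocation of the sharper second-order approximability of $Q_h$.
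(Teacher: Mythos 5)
Your proof is correct, and it takes a genuinely different route from the paper's. The paper characterizes $w_h = P_{X_h}v_h$ as the velocity component of an auxiliary discrete mixed (Darcy-type) saddle-point problem, introduces its continuous counterpart $w = v_h - \nabla q$ with $q$ solving the same Dirichlet Poisson problem as your $\eta$, and then runs a standard mixed-FEM error analysis between $(w_h,q_h)$ and $(w,q)$ — comparing $w_h$ with $\Pi_h w$ and $q_h$ with $P_{Q_h}q$, gaining a power of $h$ in $L^2$, and converting back with the inverse inequality. You bypass the auxiliary saddle-point problem entirely: the Fortin property \eqref{Fortin-Pih-def} lets you exhibit $v_h - \Pi_h\nabla\eta$ as an explicit element of $X_h$, which collapses the whole lemma to the single superconvergence estimate $\|P_{X_h}\nabla\eta\|_{L^2}\lesssim h\|\eta\|_{H^2}$, and your duality computation for that estimate is clean — the Dirichlet condition $\eta|_{\partial D}=0$ kills the boundary term and the discrete divergence-free property of $X_h$ admits the insertion of $P_{Q_h}\eta$, so the second-order approximation \eqref{app-pro-q} applies. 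Both arguments ultimately rest on the same three ingredients ($H^2$ regularity of the Dirichlet Laplacian, second-order $L^2$-approximation of $Q_h$, and the inverse inequality, each traded for exactly one power of $h$), but your organization is shorter and avoids invoking well-posedness of the discrete mixed problem via the inf-sup condition; the paper's version is the more mechanical, off-the-shelf mixed-FEM error analysis.
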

\begin{proof}
Let $P_{Q_h}: L^2(D) \to Q_h$ be the $L^2$-orthogonal projection onto $Q_h$.
For any given $v_h \in V_h$, the inf-sup condition \eqref{inf-sup} implies that there exists a unique solution $(w_h, q_h) \in V_h \times Q_h$ of the following equations:
\begin{align*}
(w_h, a_h) - (q_h, \nabla \cdot a_h) &= (v_h, a_h) &&\forall\, a_h \in V_h,&\\
(\nabla \cdot w_h, \eta_h) &= 0 &&\forall\, \eta_h \in Q_h.&
\end{align*}
Then $w_h \in X_h$  and
$$(w_h, a_h) = (v_h, a_h) \qquad \forall \,\,a_h \in X_h.$$
Let $z_h = v_h - w_h$, then $w_h$ and $z_h$ are the functions in the orthogonal decomposition \eqref{orth-decom-Vh}.
Before estimating $(w_h, q_h)$ directly, we first introduce $(w, q) \in  H^1(D)^d \times H^1(D)$ to be the solution of the continuous problem
\begin{align*}
(w, a) - (q, \nabla \cdot a) &= (v_h, a)  &&\forall\, a \in H^1(D)^d,\\
(\nabla \cdot w,  \eta) &= 0 &&\forall\, \eta \in L^2(D).
\end{align*}
Via integration by parts in the equation of $w$, one can obtain that $w = v_h - \nabla q$, with $q \in H^1(D)$
being the weak solution of
\begin{equation}
\left \{
\begin{aligned} 
\Delta q &= \nabla \cdot v_h \qquad &&{\rm in}\, \, D,  \\
q &= 0 \qquad &&{\rm on}\, \, \partial D.
\end{aligned}
\right .
\end{equation} 
The equation above has the standard $H^2$ elliptic regularity, i.e.,
\begin{align}\label{orth-H1-sta-q}
\|q\|_{H^2}\lesssim \|\nabla \cdot v_h\|_{L^2}
\lesssim \|v_h\|_{H^1},
\end{align}
which implies that 
\begin{align}\label{orth-H1-sta-w}
\|w\|_{H^1}= \|v_h - \nabla q\|_{H^1}
\lesssim \|v_h\|_{H^1}
\end{align}

Denote $\theta_h = w_h - \Pi_h w$ and $\phi_h = q_h - P_{Q_h} q$, and
consider the difference between the equations of $w_h$ and $w$, i.e.,
\begin{align*}
(\theta_h, a_h) - ( \phi_h, \nabla \cdot a_h) &=(w - \Pi_h w, a_h) + (P_{Q_h} q - q, \nabla \cdot a_h) &&\forall \, a_h \in V_h,&\\
(\nabla \cdot \theta_h, \eta_h) &= 0 &&\forall \, \eta_h \in Q_h.&
\end{align*}
Substituting $a_h = \theta_h$ and $\eta_h = \phi_h$ into the equations above and using the inverse inequality \eqref{app-inverse} we obtain
\begin{align*}
\|\theta_h\|_{L^2}^2 =\, & (w - \Pi_h w, \theta_h) + (P_{Q_h} q - q, \nabla \cdot \theta_h)\\
\le\, & \|w - \Pi_h w\|_{L^2}\|\theta_h\|_{L^2}
+ \|P_{Q_h} q - q\|_{L^2} \|\nabla \cdot \theta_h\|_{L^2}\\
\lesssim\, & h\|w\|_{H^1} \|\theta_h\|_{L^2} + h^2\|q\|_{H^2}\|\theta_h\|_{H^1}\\
\lesssim\, & h\big(\|w\|_{H^1} + \|q\|_{H^2}\big)\|\theta_h\|_{L^2},
\end{align*}
which together with \eqref{orth-H1-sta-q} and \eqref{orth-H1-sta-w} implies
\begin{align*}
\|\theta_h\|_{H^1} \lesssim h^{-1} \|\theta_h\|_{L^2}
\lesssim \|w\|_{H^1} + \|q\|_{H^2} \le \|v_h\|_{H^1}.
\end{align*}
Therefore, the $H^1$-stability \eqref{Fortin-Pih-H1-sta} of the Fortin projection operator gives
\begin{align*}
\|w_h\|_{H^1} = \|\theta_h + \Pi_h w\|_{H^1} \lesssim \|v_h\|_{H^1},
\end{align*}
which yields
\begin{align*}
\|z_h\|_{H^1} = \|v_h - w_h\|_{H^1} \lesssim  \|v_h\|_{H^1}.
\end{align*}
This proves the desired $H^1$-stability result in \eqref{orth-decom-H1-sta}.
\end{proof}
\begin{remark}\upshape 
The $H^1$ stability in Lemma \ref{Lemma:H1PXh} implies the following properties: 
\begin{align}\label{H1-stability-PXh}
\|P_{X_h}v\|_{H^1} &\le C\|v\|_{H^1}  &&\mbox{for}\,\,\,v\in H^1(D)^d,\\[5pt] 
\label{H2-app-PXh2}
\|v-P_{X_h}v\|_{H^1} \lesssim \inf_{v_h \in X_h} \|v-v_h\|_{H^1} 
&\le Ch\|v\|_{H^2} &&\mbox{for}\,\,\,v \in \DD(A) ,\\
\label{H2-app-PXh1}
\|v-P_{X_h}v\|_{L^2} \lesssim \inf_{v_h \in X_h} \|v-v_h\|_{L^2} 
&\le Ch^2 \|v\|_{H^2} &&\mbox{for}\,\,\, v \in \DD(A) . 
\end{align}
\end{remark}

For any $v\in \DD(A)$, we denote by $q_v$ the solution of \eqref{def-Domain-p}, and denote by 
$R_{X_h}: V \rightarrow X_h$ the Stokes--Ritz projection defined by 
\begin{align}\label{def-Stokes-Ritz}
\big(v-R_{X_h}v , w_h\big) + \big(A_h(v-R_{X_h}v), w_h\big) -(q_v, \nabla\cdot w_h) = 0\qquad\forall\, w_h\in X_h,
\end{align}
which is equivalent to finding $(R_{X_h}v,q_{v,h})\in V_h\times Q_h$ satisfying  
\begin{align}
\begin{aligned}
\big(v-R_{X_h}v , w_h\big) + 2\big(\D(v-R_{X_h}v), \D(w_h) \big)-(q_v - q_{v,h}, \nabla\cdot w_h)&=0 \quad
&&\forall\, w_h\in V_h ,\\
\big(\nabla\cdot (v-R_{X_h}v), q_h\big)&=0\quad
&&\forall\, q_h\in Q_h . 
\end{aligned}
\end{align}
The Stokes--Ritz projection has the following error bound 
(cf. \cite[Lemma 2.44 and
Lemma 2.45]{ern2004theory} and \cite[Proposition 4.18]{ern2004theory}):
\begin{align}\label{Error-Stokes-Ritz}
\|v-R_{X_h}v\|_{L^2} + h \|v-R_{X_h}v\|_{H^1} 
\lesssim h^2 \big(\|v\|_{H^2} + \|q_v\|_{H^1} \big) \le Ch^2 \|v\|_{H^2}\quad\forall\,\, v\in \DD(A).
\end{align}

\subsection{Fractional powers of $I+A$}

Since the Stokes operator $A$ defined in \eqref{A-operator} is self-adjoint and positive semi-definite, it follows that the operator $z + A$ is invertible on $X$ for  $z\in \C\backslash (-\infty, 0]$, and $-A$ generates a bounded analytic semigroup $E(t)=e^{-tA}$ on $X$; see \cite[Example 3.7.5]{ABHN2011}. 
The fractional powers of the positive definite operator $I+A$ (with compact inverse) can be defined by means of the spectral decomposition (see \cite[Appendix B.2]{kruse2014strong}), and the following norm equivalence holds. 

\begin{lemma}\label{lem-IA-sta}
The following equivalence relations hold for $s \in [0,2]${\rm:} 
\begin{align}
\label{IA-eqi-norms} 
\qquad \|v\|_{H^s} &\lesssim \|(I+A)^{\frac{s}{2}}v\|_{L^2} \lesssim \|v\|_{H^s}  &&\forall\, v \in \DD(A^{\frac{s}{2}})  ,  \\
\label{IA-eqi-norms-dual}
\qquad \|v\|_{H^{-s}}& \lesssim \|(I+A)^{-\frac{s}{2}}v\|_{L^2} \lesssim \|v\|_{H^{-s}}  &&\forall\, v \in \DD(A^{\frac{s}{2}}) ' ,
\end{align}
where $\DD(A^{\frac{s}{2}})=(X, \DD(A))_{[s/2]}$ for $s\in[0,2]$. 
\end{lemma}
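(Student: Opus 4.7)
The plan is to treat both \eqref{IA-eqi-norms} and \eqref{IA-eqi-norms-dual} as consequences of the endpoint cases $s\in\{0,2\}$ combined with complex interpolation (which is the natural framework since $\DD(A^{s/2})$ is \emph{defined} by interpolation between $X$ and $\DD(A)$). The reverse inequality in \eqref{IA-eqi-norms-dual} will be recovered by a duality argument from \eqref{IA-eqi-norms}.

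First I would verify the two endpoints of \eqref{IA-eqi-norms}. For $s=0$, both sides equal $\|v\|_{L^2}$. For $s=2$, I would use the identification $Av=-\Delta v+\nabla q_v$ given in \eqref{A-operator}--\eqref{def-Domain-p}, so that for $v\in\DD(A)$,
\begin{align*}
(I+A)v = v-\Delta v+\nabla q_v.
\end{align*}
The inequality $\|(I+A)v\|_{L^2}\lesssim \|v\|_{H^2}$ then follows from the trace estimate $\|q_v\|_{H^1}\lesssim \|\D(v)\n\cdot\n\|_{H^{1/2}(\partial D)}\lesssim \|v\|_{H^2}$ for the harmonic extension in \eqref{def-Domain-p}. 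The reverse inequality $\|v\|_{H^2}\lesssim \|(I+A)v\|_{L^2}$ is exactly the $H^2$ elliptic regularity assumption \eqref{stokes-sta-H2} for the stationary Stokes system (applied to $(v,q_v)$ with source $g=(I+A)v\in X$, after noting $\nabla\cdot v=0$ and $\D(v)\n\times \n=0$ imply $\T(v,q_v)\n=0$).

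Next I would interpolate. Since $\DD(A^{s/2})=(X,\DD(A))_{[s/2]}$ by definition, the norm $\|(I+A)^{s/2}\cdot\|_{L^2}$ is the natural interpolation norm on this space. The right inequality $\|(I+A)^{s/2}v\|_{L^2}\lesssim \|v\|_{H^s}$ follows by viewing $I+A:\DD(A^{s/2})\to L^2$ as a bounded linear map between interpolation scales. Concretely, using the complex interpolation identity $(L^2(D)^d,H^2(D)^d)_{[s/2]}=H^s(D)^d$ (for $s\in[0,2]$, $s\neq 1/2,3/2$, with standard modifications at the exceptional values within the subspace $\DD(A^{s/2})$ where the relevant boundary conditions are automatically built in), the endpoint bound at $s=2$ extends to every intermediate $s$. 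The left inequality $\|v\|_{H^s}\lesssim \|(I+A)^{s/2}v\|_{L^2}$ follows analogously by reading the same two endpoint estimates in the opposite direction. For the dual statement \eqref{IA-eqi-norms-dual}, I would argue by duality: writing
\begin{align*}
\|(I+A)^{-s/2}v\|_{L^2}=\sup_{\|w\|_{L^2}=1}(v,(I+A)^{-s/2}w),
\end{align*}
and using \eqref{IA-eqi-norms} with $(I+A)^{-s/2}w\in\DD(A^{s/2})$ satisfying $\|(I+A)^{-s/2}w\|_{H^s}\lesssim \|w\|_{L^2}$, one obtains $\|(I+A)^{-s/2}v\|_{L^2}\lesssim \|v\|_{H^{-s}}$. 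The reverse direction is symmetric.

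The main obstacle I anticipate is the identification of the interpolation space $(X,\DD(A))_{[s/2]}$ with the correct subspace of $H^s(D)^d$ at values of $s$ where boundary conditions matter (roughly $s>1/2$, and at the exceptional level $s=3/2$). The space $\DD(A)$ is not simply $H^2(D)^d$: it carries the divergence-free constraint and the partial stress boundary condition $\D(v)\n\times \n=0$. Establishing that the complex interpolation scale produces an $H^s$-equivalent norm without spurious constants requires either invoking the known resolvent estimates for the Stokes operator with stress boundary conditions (so that $I+A$ admits bounded imaginary powers on $X$) or applying Seeley-type arguments for the $H^2$-domain characterization. Once bounded imaginary powers are available, the equivalence $\DD(A^{s/2})\simeq [X,\DD(A)]_{s/2}$ becomes a routine consequence of Triebel's theorem, and the norm equivalences stated in \eqref{IA-eqi-norms}--\eqref{IA-eqi-norms-dual} follow by interpolation as above.
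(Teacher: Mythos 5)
Your proposal is correct and follows essentially the same route as the paper: verify the endpoint cases $s=0$ and $s=2$ (the latter via the $H^2$ elliptic regularity \eqref{stokes-sta-H2}), pass to intermediate $s$ by interpolation, and obtain \eqref{IA-eqi-norms-dual} by duality. The one obstacle you flag, bounded imaginary powers, is automatic here because $A$ is self-adjoint and positive semi-definite with compact resolvent, which is exactly why the paper defines the fractional powers by spectral decomposition and can interpolate without further ado.
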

\begin{remark}
\upshape
$(X, \DD(A))_{[\theta]}$, with $\theta\in[0,1]$, denotes the complex interpolation spaces between $X$ and $\DD(A)$. Since $\DD(A^{\frac{1}{2}})=X\cap H^1(D)^d$, via complex interpolation it can be shown that $\DD(A^{\frac{s}{2}})=X\cap H^s(D)^d$ for $s \in [0, 1]$. 
\end{remark}

The proof of Lemma \ref{lem-IA-sta} is presented in Appendix B. 

The analyticity of the semigroup $E(t)=e^{-tA}$ implies the following results. 

\begin{lemma}\label{lem-A:sg-sta} 
For any $v \in X$, the following  results hold for $t\in(0,T]${\rm:} 
\begin{align}
\label{A:sg-sta1}
\|(I+A)^{\gamma}E(t)v\|_{L^2} &\lesssim  t^{-\gamma}\|v\|_{L^2} &&\forall \, \gamma \ge 0,&\\
\label{A:sg-sta2}
\|(I+A)^{-\mu} (I- E(t))v\|_{L^2} &\lesssim t^{\mu}\|v\|_{L^2}  &&\forall \,\mu \in [0, 1] .&
\end{align}  
The following  results hold for $0\le t_1 \le t_2 \le T$:
\begin{align}
\label{A:sg-sta3}
\int_{t_1}^{t_2}\|(I+A)^{\frac{\rho}{2}}E(t_2 - s)v\|_{L^2}^2 \,\d s &\lesssim (t_2 - t_1)^{1 - \rho} \|v\|_{L^2}^2   &&\forall \, \rho  \in [0, 1],&\\
\label{A:sg-sta4}
\Big\|(I+A)^{\rho}\int_{t_1}^{t_2}E(t_2 - s)v \,\d s \Big\|_{L^2} &\lesssim(t_2 - t_1)^{1 - \rho} \|v\|_{L^2}  &&\forall \,\rho  \in [0, 1].&
\end{align}  
\end{lemma}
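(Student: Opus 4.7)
\smallskip

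\noindent\textbf{Proof plan for Lemma \ref{lem-A:sg-sta}.}
The plan is to reduce all four inequalities to elementary scalar estimates via the spectral calculus of $A$. From \eqref{Apz-operator} the form $(Av,w)=2(\D(v),\D(w))$ is symmetric and non-negative on $V\times V$, so $A$ is self-adjoint and positive semi-definite on $X$, and the $H^2$-regularity \eqref{stokes-sta-H2} gives $A$ a compact resolvent. Hence $A$ has a projection-valued spectral measure $\{E_\lambda\}$ supported on $[0,\infty)$, and for any Borel function $\phi$,
$$
\|\phi(A) v\|_{L^2}^2 = \int_{0}^{\infty} |\phi(\lambda)|^2\, \d\mu_v(\lambda),\qquad \mu_v(B) := \|E_B v\|_{L^2}^2.
$$
Each of the four statements in the lemma becomes a uniform pointwise bound on $[0,\infty)$ for a specific choice of $\phi$. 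A pleasant feature of this normalization is that all fractional powers are taken of $I+A$ rather than $A$, so the possible zero eigenvalue of $A$ (allowed by the stress boundary condition) causes no trouble: every bound need only be verified on $\lambda\in[0,\infty)$.

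For \eqref{A:sg-sta1} I would take $\phi(\lambda)=(1+\lambda)^{\gamma}e^{-t\lambda}$ and use the standard scalar bound $\lambda^{\gamma}e^{-t\lambda}\le (\gamma/(et))^{\gamma}$ together with $e^{-t\lambda}\le 1$ to get $\phi(\lambda)\le C_\gamma(1+t^{-\gamma})\le C_{\gamma,T}\, t^{-\gamma}$ for $t\in(0,T]$. For \eqref{A:sg-sta2} take $\phi(\lambda)=(1+\lambda)^{-\mu}(1-e^{-t\lambda})$ and split on $t\lambda\le 1$ vs $t\lambda\ge 1$, using the elementary inequality $1-e^{-x}\le\min(x,1)$: on the first region $\phi(\lambda)\le t\lambda\cdot(1+\lambda)^{-\mu}$, which is at most $t^{\mu}$ after distinguishing $\lambda\le 1$ and $\lambda\ge 1$; on the second region $\lambda\ge 1/t$ forces $(1+\lambda)^{-\mu}\le t^\mu$ and $(1-e^{-t\lambda})\le 1$. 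Hence $\sup_\lambda \phi(\lambda)\lesssim t^\mu$.

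For \eqref{A:sg-sta3} and \eqref{A:sg-sta4} I would apply Tonelli to exchange the time integral with the spectral integral. Writing $\delta=t_2-t_1$, the inner integrals are
$$
\int_{t_1}^{t_2} e^{-2(t_2-s)\lambda}\,\d s = \frac{1-e^{-2\delta\lambda}}{2\lambda},\qquad
\int_{t_1}^{t_2} e^{-(t_2-s)\lambda}\,\d s = \frac{1-e^{-\delta\lambda}}{\lambda},
$$
(both interpreted as $\delta$ at $\lambda=0$). Thus \eqref{A:sg-sta3} reduces to $(1+\lambda)^{\rho}\frac{1-e^{-2\delta\lambda}}{2\lambda}\lesssim \delta^{1-\rho}$, and \eqref{A:sg-sta4} reduces to $(1+\lambda)^{\rho}\frac{1-e^{-\delta\lambda}}{\lambda}\lesssim \delta^{1-\rho}$ (after squaring in the spectral integral, in the latter). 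Both bounds follow from the same casework as above: for $\delta\lambda\le 1$ use $1-e^{-c\delta\lambda}\le c\delta\lambda$ so the expression is bounded by $c\,\delta(1+\lambda)^{\rho}$, and $(1+\lambda)^{\rho}\le(1+1/\delta)^{\rho}\lesssim \delta^{-\rho}$ for $\delta\le T$; for $\delta\lambda\ge 1$ use $1-e^{-c\delta\lambda}\le 1$ so the expression is bounded by $(1+\lambda)^{\rho}/\lambda\lesssim\lambda^{\rho-1}\le\delta^{1-\rho}$ since $\rho\le 1$ and $\lambda\ge 1/\delta$.

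There is no real obstacle: modulo care with the $\lambda=0$ endpoint (handled automatically by the $I+A$ normalization) and keeping constants independent of $t,t_1,t_2,v$, every inequality is an easy case distinction after passing to the spectral side. The only step worth stating cleanly is the verification that the constants depend only on $T$ and on the respective exponent $\gamma,\mu,\rho$, which is what is used in Section~\ref{sec:spde-fully-fem}.
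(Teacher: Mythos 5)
Your proposal is correct. It is, however, a genuinely more self-contained route than the one the paper takes: the paper's proof of Lemma \ref{lem-A:sg-sta} is essentially a citation — it introduces the shifted semigroup $\widetilde E(t)=e^{-t(I+A)}$, whose generator $I+A$ is strictly positive definite with compact inverse, invokes Lemma B.9 of Kruse's monograph for $\widetilde E(t)$, and then transfers the bounds back to $E(t)=e^{t}\widetilde E(t)$ using $e^{t}\le e^{T}$. You instead work directly with the spectral measure of the positive semi-definite operator $A$ on $[0,\infty)$ and verify each of the four scalar bounds by hand; your observation that taking fractional powers of $I+A$ (rather than of $A$) makes the zero eigenvalue harmless is exactly the issue the paper's shift is designed to sidestep, so the two devices play the same role. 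Your case distinctions ($t\lambda\lessgtr 1$, resp.\ $\delta\lambda\lessgtr 1$ with $\delta=t_2-t_1$) are all sound, the Tonelli exchange for \eqref{A:sg-sta3}--\eqref{A:sg-sta4} is legitimate since the integrands are nonnegative, and the constants you track depend only on $T$ and the exponents, which is what Section \ref{sec:spde-fully-fem} requires. What your approach buys is independence from the external reference (and a proof that works without assuming strict positive definiteness); what the paper's approach buys is brevity. Either is acceptable.
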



The proof of Lemma \ref{lem-A:sg-sta} is also presented in Appendix B.

\subsection{The discrete semigroup from spatial discretization}
Let $0 = \lambda_{h,1} \le \lambda_{h,2} \le \cdots \le \lambda_{h,M_h}$ be the eigenvalues of the discrete Stokes operator $A_h: X_h \to X_h$ with corresponding orthonormal eigenvectors $\{\varphi_{h,j}\}_{j=1}^{M_h} \subset X_h$ and $\dim(X_h) = M_h$. 
The operator $-A_h$ generates a  bounded analytic semigroup on $X_h$ defined by
\begin{align}\label{semigroup-Eh-t}
E_h(t) = e^{-tA_h},
\end{align}
which can be expressed as $E_h(t)v_h = \sum_{j = 1}^{M_h} e^{-t \lambda_{h,j}}(v_h, \varphi_{h,j})\varphi_{h,j}$ for all $v_h \in X_h$.
Hence, the following two properties can be derived based on the proof of \cite[Lemma 3.9]{thomee2007galerkin} and \cite[Lemma 3.2 (iii)]{kruse2012optimal}, respectively:
\begin{align}\label{Ah:sg-sta-E}
\qquad\qquad\|(I+A_h)^{\frac{\gamma}{2}}E_h(t)v_h\|_{L^2} &\lesssim t^{-\frac{\gamma}{2}}\|v_h\|_{L^2}  && \mbox{for} \,\,\, v_h \in X_h, \, \, \gamma \in [0, 1],&\\
\label{Ah:sg-sta-int}
\qquad\qquad\int_0^t\|(I+A_h)^{\frac12}E_h(s) v_h\|_{L^2}^2 \d s &\lesssim \|v_h\|_{L^2}^2, && \mbox{for} \,\,\, v_h \in X_h.&
\end{align}
Let 
\begin{align}
\label{semi-dis-approx:Phi}
\Phi_h(t): = E(t)P_{X} - E_h(t)P_{X_h}\qquad \mbox{for} \quad t \in [0, T],
\end{align}
which is the error in approximating the continuous semigroup. 
The main results of this subsection are the estimates of $\Phi_h(t)$ presented in the following lemma. 

\begin{lemma}\label{lem:FEM-errs}
For $v \in  H^1(D)^d$, the following estimates hold{\rm:}
\begin{align}\label{lem:FEM-errs-TH}
\big\|\Phi_h(t) v \big\|_{L^2}
\le 
Ct^{-\frac 12}h^2\|v\|_{H^1}, 
\end{align}
and
\begin{align}
\label{lem:FEM-errs-int2}
\Big(\int_0^t \big\|\Phi_h(s) v\big\|_{L^2}^2 \, \d s \Big)^{\frac12}
\le 
Ch^2 \|v\|_{H^1},
\end{align}
For $v \in L^2(D)^d$, the following estimates hold{\rm:}
\begin{align}\label{lem:FEM-errs-TH-l2}
\big\|\Phi_h(t) v \big\|_{L^2}
\le 
Ct^{-1}h^2\|v\|_{L^2}, 
\end{align}
and
\begin{align}
\label{lem:FEM-errs-int}
\bigg\|\int_0^t \Phi_h(s) v\, \d s\bigg\|_{L^2}  + 
h\Big\| \int_0^t \nabla\Phi_h(s) v \, \d s \Big\|_{L^2}
\le 
Ch^2 \|v\|_{L^2}.
\end{align}
If $v \in \DD(A^{\frac{1+\rho}{2}})$ then 
\begin{align}\label{lem:FEM-errs-In-u0}
\big\|\Phi_h(t) v \big\|_{L^2}+ h\big\|\nabla \Phi_h(t) v \big\|_{L^2}
\le Ct^{-\frac{1-\rho}{2}}h^2\|v\|_{H^{1+\rho}} \quad\mbox{for}\,\,\,\rho\in[0,1]. 
\end{align}
\end{lemma}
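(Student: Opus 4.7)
I would prove Lemma~\ref{lem:FEM-errs} by reducing to the classical Stokes--FEM parabolic error estimates via the continuous Helmholtz decomposition $v=P_X v+\nabla\eta$, where $\eta\in H^1_0(D)$ solves $\Delta\eta=\nabla\cdot v$ (so $\|\eta\|_{H^1}\lesssim\|v\|_{L^2}$, and $\|\eta\|_{H^2}\lesssim\|v\|_{H^1}$ when $v\in H^1(D)^d$). This induces the splitting
\[
\Phi_h(t)v \;=\; \Phi_h^{(0)}(t)P_X v \;-\; E_h(t)P_{X_h}\nabla\eta,
\qquad
\Phi_h^{(0)}(t)w := E(t)w-E_h(t)P_{X_h}w,
\]
in which $\Phi_h^{(0)}(t)$ acts on the divergence-free element $w=P_X v\in X$. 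For that first piece, all the required bounds are the classical Stokes--FEM analogues of the parabolic error estimates of \cite{thomee2007galerkin}; they follow from the Stokes--Ritz projection \eqref{def-Stokes-Ritz}--\eqref{Error-Stokes-Ritz} and the discrete-semigroup smoothing \eqref{Ah:sg-sta-E}--\eqref{Ah:sg-sta-int}, and I will quote them. The essentially new work---and the main obstacle, flagged in Remark~\ref{remark41}---is to control the gradient piece $E_h(t)P_{X_h}\nabla\eta$, since $P_{X_h}$ does \emph{not} annihilate gradients (in contrast to $P_X$).

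The central tool for the gradient piece is the identity
\[
(\nabla\eta,\,w_h) \;=\; -(\eta,\,\nabla\cdot w_h) \;=\; -(\eta-P_{Q_h}\eta,\,\nabla\cdot w_h) \qquad\forall\, w_h\in X_h,
\]
which turns the gradient into an error of the $L^2$ projection $P_{Q_h}$ by exploiting the discrete divergence-free constraint defining $X_h$. Testing $(E_h(t)P_{X_h}\nabla\eta,\psi)=(\nabla\eta,E_h(t)P_{X_h}\psi)$ against arbitrary $\psi\in L^2$, using $\|\eta-P_{Q_h}\eta\|_{L^2}\lesssim h^2\|\eta\|_{H^2}\lesssim h^2\|v\|_{H^1}$ together with the smoothing $\|E_h(t)P_{X_h}\psi\|_{H^1}\lesssim t^{-1/2}\|\psi\|_{L^2}$ from \eqref{Ah:sg-sta-E}, yields \eqref{lem:FEM-errs-TH} immediately. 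The sharper bound \eqref{lem:FEM-errs-TH-l2} (with decay $t^{-1}$ from $L^2$ data) is the main obstacle and requires an additional layer: I will establish
\[
\|(I+A_h)^{-1}P_{X_h}\nabla\eta\|_{L^2}\;\lesssim\; h^2\|v\|_{L^2}
\]
by testing with $y_h=(I+A_h)^{-1}z_h$, the Galerkin solution to the discrete Stokes problem with datum $z_h$, whose continuous counterpart $y$ is exactly divergence-free; hence $\|\nabla\cdot y_h\|_{L^2}=\|\nabla\cdot(y_h-y)\|_{L^2}\lesssim h\|z_h\|_{L^2}$ by \eqref{Error-Stokes-Ritz}, and combining this with $\|\eta-P_{Q_h}\eta\|_{L^2}\lesssim h\|v\|_{L^2}$ gives the advertised $h^2$. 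Writing $E_h(t)P_{X_h}\nabla\eta=[(I+A_h)E_h(t)]\,[(I+A_h)^{-1}P_{X_h}\nabla\eta]$ and using $\|(I+A_h)E_h(t)\|_{L^2\to L^2}\lesssim t^{-1}$ then closes \eqref{lem:FEM-errs-TH-l2}.

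For the integrated estimates, \eqref{lem:FEM-errs-int2} is obtained by combining the classical parabolic FEM bound $\int_0^t\|\Phi_h^{(0)}(s)P_X v\|_{L^2}^2\,ds\lesssim h^4\|v\|_{H^1}^2$ with \eqref{Ah:sg-sta-int} applied to $(I+A_h)^{-1/2}P_{X_h}\nabla\eta$; the needed $\|(I+A_h)^{-1/2}P_{X_h}\nabla\eta\|_{L^2}\lesssim h^2\|v\|_{H^1}$ follows from the same duality trick with $y_h=(I+A_h)^{-1/2}z_h$ (so $\|y_h\|_{H^1}\lesssim\|z_h\|_{L^2}$) and $\|\eta-P_{Q_h}\eta\|_{L^2}\lesssim h^2\|\eta\|_{H^2}$. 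For \eqref{lem:FEM-errs-int}, I will use the algebraic identity
\[
\int_0^t E_h(s)f_h\,ds \;=\; A_h^{-1}(I-E_h(t))f_h,
\]
which is valid for $f_h=P_{X_h}\nabla\eta$ because this element is orthogonal to the kernel of $A_h$ (the rigid motions in $X_h$: indeed $(P_{X_h}\nabla\eta,r)=-(\eta,\nabla\cdot r)=0$ for any such $r$). The $L^2$ bound then reduces to $\|A_h^{-1}P_{X_h}\nabla\eta\|_{L^2}\lesssim h^2\|v\|_{L^2}$ (already proven above), and the $H^1$ bound follows from the spectral-calculus estimate $\|(I+A_h)^{1/2}A_h^{-1}(I-E_h(t))f_h\|_{L^2}^2\lesssim\|(I+A_h)^{-1/2}f_h\|_{L^2}^2+\|A_h^{-1}f_h\|_{L^2}^2$ combined with the two auxiliary bounds $\lesssim h\|v\|_{L^2}$ and $\lesssim h^2\|v\|_{L^2}$ respectively.

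Finally, \eqref{lem:FEM-errs-In-u0} is the easiest: since $v\in\mathcal{D}(A^{(1+\rho)/2})\subset X$ forces $\nabla\eta=0$, the statement collapses to the classical Stokes--FEM error bound on divergence-free data, which is standard at the two endpoints ($\rho=0$ following essentially from \eqref{lem:FEM-errs-TH}, $\rho=1$ directly via Stokes--Ritz plus discrete-semigroup boundedness) and is interpolated for $\rho\in(0,1)$. Throughout the lemma the obstacle is the same: extracting a factor $h^2$ from $P_{X_h}\nabla\eta$ requires simultaneously using the approximation order of $P_{Q_h}$ and the FEM error $\|\nabla\cdot y_h\|_{L^2}\lesssim h\|z_h\|_{L^2}$ for the discrete Stokes problem; the naive estimate $\|\nabla\cdot y_h\|_{L^2}\leq\|y_h\|_{H^1}$ would produce only a first-order bound, which is the reason the standard treatment in \cite{thomee2007galerkin} requires $v\in X$.
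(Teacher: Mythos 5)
Your proposal is essentially correct, and for \eqref{lem:FEM-errs-TH} and \eqref{lem:FEM-errs-int2} it coincides with the paper's argument: the paper also splits $v=P_Xv+\nabla q$ and exploits exactly your key identity, arriving at $\Phi_h(t)\nabla q=-(I+A_h)^{1/2}E_h(t)(I+A_h)^{-1/2}P_{X_h}\nabla(q-q_h)$ and then invoking \eqref{Ah:sg-sta-E} and \eqref{Ah:sg-sta-int}. Where you genuinely diverge is in \eqref{lem:FEM-errs-TH-l2}, \eqref{lem:FEM-errs-int} and \eqref{lem:FEM-errs-In-u0}: the paper does \emph{not} decompose $v$ there, but instead represents $E(t)P_X-E_h(t)P_{X_h}$ as a Dunford contour integral of the resolvent difference $(z+A)^{-1}P_X-(z+A_h)^{-1}P_{X_h}$ and feeds in the $z$-uniform resolvent error estimate of Lemma~\ref{lem:FEM-errs-w}, which already accepts non-solenoidal data $f=v$; the factors $t^{-1}$, $t^{-(1-\rho)/2}$ and the time-integrated bounds then come from integrating $e^{{\rm Re}(z)t}$ and $z^{-1}e^{{\rm Re}(z)t}$ along the contour. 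Your alternative---pushing the Helmholtz splitting through and controlling the gradient part by duality against $(I+A_h)^{-1}z_h$, using $\|\nabla\cdot y_h\|_{L^2}=\|\nabla\cdot(y_h-y)\|_{L^2}\lesssim h\|z_h\|_{L^2}$ together with $\|\eta-P_{Q_h}\eta\|_{L^2}\lesssim h\|\eta\|_{H^1}$---is a viable, arguably more elementary route that avoids the contour calculus, at the price of needing the classical solenoidal-data estimates for $\Phi_h^{(0)}(t)P_Xv$ as a separate black box (which the paper instead derives in one stroke from the same resolvent bound). One imprecision to repair in \eqref{lem:FEM-errs-int}: the quantity you call ``already proven above'' is $\|(I+A_h)^{-1}P_{X_h}\nabla\eta\|_{L^2}\lesssim h^2\|v\|_{L^2}$, not $\|A_h^{-1}P_{X_h}\nabla\eta\|_{L^2}$; since $A_h$ has a nontrivial kernel (rigid motions) under the stress boundary condition, you should either avoid $A_h^{-1}$ altogether or note the elementary spectral bound $\lambda^{-1}(1-e^{-t\lambda})\lesssim_T(1+\lambda)^{-1}$ uniformly for $\lambda>0$, $t\in[0,T]$, so that $\|A_h^{-1}(I-E_h(t))f_h\|_{L^2}\lesssim\|(I+A_h)^{-1}f_h\|_{L^2}$ without any uniform spectral-gap assumption.
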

\begin{remark}\upshape 
{Lemma \ref{lem:FEM-errs} is proved later based on an orthogonal decomposition of $v = P_X v + \nabla q$ for $v\in L^2(D)^d$ at the end of this subsection, and it does not require $\nabla \cdot v = 0$ in the cases $v \in  L^2(D)^d$ and $v \in  H^1(D)^d$. This is important for the error analysis of the stochastic Stokes equations.} 
%
\end{remark}

The proof of Lemma~\ref{lem:FEM-errs} is based on error estimates for the semi-discrete FEM for the  deterministic linear Stokes problem  
\begin{equation}
\label{pde-linear-stokes}
\left \{
\begin{aligned} 
\partial_t u -\nabla \cdot \T(u, p) &= 0  \qquad &&{\rm in}\, \, D \times [0,T],\\
\nabla\cdot u&=0  \qquad &&{\rm in}\, \, D\times [0,T],\\
\T(u, p)\n &= 0 \qquad &&{\rm on}\, \, \partial D\times [0,T],\\
u(0) &= u^0 \qquad &&{\rm in}\, \, D.\\
\end{aligned}
\right .
\end{equation}
Applying $P_X$ to the first equation of \eqref{pde-linear-stokes}  yields the following
abstract formulation:
\begin{align}
\label{pde-u0-abs}
\partial_t u + A u = 0  \qquad \mbox{with}\qquad  u(0) = u^0 \in X,
\end{align}
of which the solution can be represented $u(t) = E(t)P_X u^0$ in terms of the semigroup generated by the Stokes operator.
The FEM for \eqref{pde-linear-stokes} reads: for given $u_h(0) = P_{X_h}u^0$, find $(u_h(t), p_h(t)) \in V_h \times Q_h$ such that 
\begin{equation}
\label{l-stokes-fem-weak}
\left \{
\begin{aligned} 
\big(\partial_t u_h(t), v_h\big) + 2\big(\D( u_h(t)), \D( v_h) \big)-(p_h(t),  \nabla \cdot v_h)&= 0 \qquad&& \forall\, v_h\in V_h ,\\
(\nabla\cdot u_h(t), q_h)&=0\qquad&& \forall\, q_h\in Q_h .
\end{aligned}
\right .
\end{equation}
This can be written into the following abstract form by choosing $v_h\in X_h$:
\begin{equation}\label{l-stokes-fem}
\frac{\d}{\d t} u_h(t) + A_h u_h(t) = 0 \quad \mbox{with} \quad u_h(0)=P_{X_h}u^0 , 
\end{equation}
of which the solution can be represented as $u_h(t) = E_h(t)P_{X_h}u^0$.



We shall estimate the error between the solutions of \eqref{pde-linear-stokes} and \eqref{l-stokes-fem-weak} by estimating the difference between the continuous and discrete resolvent operators, i.e., $(z+A)^{-1}$ and $(z+A_h)^{-1}$. This is presented in the following lemma. Its proof is presented in Appendix B. 
\begin{lemma}\label{lem:FEM-errs-w}
For any $f \in L^2(D)^d$, let $w$ and $w_h$ be the solution and finite element solution of the Stokes equations 
\begin{equation}
\label{pde-w}
\left \{
\begin{aligned} 
zw -\nabla \cdot \T(w, p) &= f  \qquad &&{\rm in}\, \, D,\\
\nabla\cdot w&=0  \qquad &&{\rm in}\, \, D,\\
\T(w, p)\n &= 0 \qquad &&{\rm on}\, \, \partial D,
\end{aligned}
\right .
\end{equation}
and
\begin{equation}
\label{pde-wh}
\left \{
\begin{aligned} 
(zw_h, v_h) + 2\big(\D( w_h), \D( v_h) \big)-(p_h,  \nabla \cdot v_h)&= (f, v_h) &&  \forall \, v_h \in V_h ,\\
(\nabla\cdot w_h, q_h)&=0&& \forall \, q_h \in Q_h ,\\
\end{aligned}
\right .
\end{equation}
where $z \in \Sigma_\phi:=\{1+z'\in\C: |{\rm arg}(z')|<\phi\}$ for some $\phi\in(0,\pi)$.
Then the following results hold{\rm:}
\begin{align}
\label{pde-w-err}
&\|w-w_h\|_{L^2} + h\|\nabla (w-w_h)\|_{L^2}\le 
Ch^2 (\|w\|_{H^2} + \|p\|_{H^1})  , \\
\label{reg-w-err}
&\|w\|_{H^2} + \|p\|_{H^1}\le 
C\|f\|_{L^2} , 
\end{align}
where the constant $C$ is independent of $z\in \Sigma_\phi$ {\rm(}but may depend on $\phi)$. 
\end{lemma}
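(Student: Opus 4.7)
My plan is to establish Part 1 (resolvent regularity) first and then use it to derive Part 2 (FEM error) via Galerkin orthogonality combined with Nitsche duality. The central technical challenge throughout is keeping the constant independent of $z\in\Sigma_\phi$.

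For \eqref{reg-w-err}, I would rewrite $zw-\nabla\cdot\T(w,p)=f$ as $w-\nabla\cdot\T(w,p)=f-(z-1)w$ and apply the assumed $H^2$ Stokes regularity \eqref{stokes-sta-H2} to obtain
\[
\|w\|_{H^2}+\|p\|_{H^1}\le C\bigl(\|f\|_{L^2}+|z-1|\,\|w\|_{L^2}\bigr).
\]
To bound the last term uniformly in $z\in\Sigma_\phi$, I would apply $P_X$ to the PDE and use \eqref{PX-Tup} to get $(z+A)w=P_Xf$, so $w=(z+A)^{-1}P_Xf$. Since $A$ is self-adjoint positive semi-definite on $X$ with $\sigma(A)\subset[0,\infty)$, the spectral theorem gives $\|(z+A)^{-1}\|_{X\to X}=\mathrm{dist}(-z,[0,\infty))^{-1}$. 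A short computation using the geometry of the shifted sector $\Sigma_\phi=\{1+w:|\arg w|<\phi\}$ shows $|\arg z|\le\phi<\pi$, so that $|z+t|\ge\cos(\phi/2)(|z|+t)$ for all $t\ge0$; this yields $\|w\|_{L^2}\le C_\phi(1+|z|)^{-1}\|f\|_{L^2}$ and hence $|z-1|\,\|w\|_{L^2}\le C_\phi\|f\|_{L^2}$.

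For the FEM error \eqref{pde-w-err}, subtracting the continuous from the discrete formulation yields the Galerkin orthogonality
\[
z(e,v_h)+2(\D e,\D v_h)-(p-p_h,\nabla\cdot v_h)=0\quad\forall\,v_h\in V_h,\quad(\nabla\cdot e,q_h)=0\quad\forall\,q_h\in Q_h,
\]
where $e:=w-w_h$. For the $H^1$ estimate I would combine this with the numerical-range coercivity $|((z+A_h)v_h,v_h)|\ge\cos(\phi/2)(|z|\|v_h\|_{L^2}^2+2\|\D v_h\|_{L^2}^2)$ (derived analogously to Part 1) and a Cea-type argument using \eqref{Fortin-Pih-def}--\eqref{Fortin-Pih-conv} and \eqref{app-pro-q} to produce $\|e\|_{H^1}+\|p-p_h\|_{L^2}\le Ch(\|w\|_{H^2}+\|p\|_{H^1})$. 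For the $L^2$ estimate I would apply Nitsche duality: solve the adjoint problem $\bar z\varphi-\nabla\cdot\T(\varphi,\sigma)=e$ (observing $\bar z\in\Sigma_\phi$ whenever $z\in\Sigma_\phi$), apply Part 1 to get $\|\varphi\|_{H^2}+\|\sigma\|_{H^1}\le C_\phi\|e\|_{L^2}$ together with $|z|\,\|\varphi\|_{L^2}\le C_\phi\|e\|_{L^2}$, expand $\|e\|_{L^2}^2$ as the duality pairing $(e,\bar z\varphi-\nabla\cdot\T(\varphi,\sigma))$, integrate by parts, and substitute the Stokes--Ritz test function $R_{X_h}\varphi\in X_h$ into the Galerkin orthogonality; the approximation bound \eqref{Error-Stokes-Ritz} combined with the $H^1$ error bound just proved then yields $\|e\|_{L^2}\le Ch^2(\|w\|_{H^2}+\|p\|_{H^1})$.

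The main obstacle I anticipate is obtaining constants genuinely independent of $z\in\Sigma_\phi$, especially for $\phi$ close to $\pi$ where the bilinear form $z(\cdot,\cdot)+2(\D\cdot,\D\cdot)$ has no real-part coercivity in the standard sense. The numerical-range bound $|z+t|\ge\cos(\phi/2)(|z|+t)$, $t\ge0$, which exploits the shifted-sector geometry ($|\arg z|\le\phi<\pi$), is the key technical ingredient in both parts. A secondary difficulty is that $X_h$ is only discretely (not pointwise) divergence-free, so pressure residuals such as $(p-P_{Q_h}p,\nabla\cdot v_h)$ do not vanish for $v_h\in X_h$ and must be controlled using the $H^1$-regularity of $p$ (from Part 1) together with \eqref{app-pro-q}.
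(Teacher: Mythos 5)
Your overall architecture --- sector/numerical-range bounds for the resolvent regularity and the $H^1$ estimate, then Nitsche duality for the $L^2$ estimate --- is the same as the paper's, and your proof of \eqref{reg-w-err} via $w=(z+A)^{-1}P_Xf$ and the spectral bound $|z+t|\ge\cos(\phi/2)(|z|+t)$ is correct (and arguably cleaner than the paper's real/imaginary-part computation). There are, however, two concrete gaps in the FEM error estimate, both located exactly at the $z$-uniformity issue you flag as the main difficulty.

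First, in the Cea step you test the error equation against the Fortin interpolant. The right-hand side then contains the term $z(w-\Pi_h w, e_h)$, and after absorbing $\tfrac14|z|\|e_h\|_{L^2}^2$ into the numerical-range coercivity you are left with $|z|\,\|w-\Pi_h w\|_{L^2}^2$, which by \eqref{Fortin-Pih-conv} is only $|z|h^2\|w\|_{H^1}^2$ --- not bounded uniformly in $z$. The paper's device is to compare $w_h$ with $P_{X_h}w$, the $L^2$-orthogonal projection onto $X_h$: then $(w-P_{X_h}w,e_h)=0$ identically for $e_h=P_{X_h}w-w_h\in X_h$, the $z$-term disappears without any absorption, and one obtains the \emph{weighted} bound $|z|^{1/2}\|P_{X_h}w-w_h\|_{L^2}+\|\nabla(P_{X_h}w-w_h)\|_{L^2}\lesssim h(\|w\|_{H^2}+\|p\|_{H^1})$ (inequality \eqref{w-err-L2H1-z}); the $H^1$-stability and approximation properties of $P_{X_h}$ from Lemma \ref{Lemma:H1PXh} are what make this comparison admissible.

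Second, and more seriously, the duality argument does not by itself yield $\|e\|_{L^2}\le Ch^2(\|w\|_{H^2}+\|p\|_{H^1})$ uniformly in $z$. Whichever test function $v_h\in X_h$ you insert, the pairing retains the term $z(e,\varphi-v_h)$; with $\|\varphi-R_{X_h}\varphi\|_{L^2}\lesssim h^2\|\varphi\|_{H^2}\lesssim h^2\|e\|_{L^2}$ this contributes $|z|h^2\|e\|_{L^2}^2$ to the right-hand side, which can be absorbed into the left-hand side only when $|z|h^2$ is small. The paper closes the argument with a case split: for $|z|\le h^{-2}$ the duality bound applies, while for $|z|\ge h^{-2}$ one abandons duality and instead reads off $\|P_{X_h}w-w_h\|_{L^2}\lesssim h|z|^{-1/2}(\|w\|_{H^2}+\|p\|_{H^1})\le h^2(\|w\|_{H^2}+\|p\|_{H^1})$ from the weighted estimate above. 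Your proposal contains neither the weighted $L^2$ bound nor the case split, so the final step as written does not produce the claimed $z$-independent constant.
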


\begin{remark}\upshape 
The first equation in \eqref{pde-w} can be written as 
$$
\nabla p = f - (z-\Delta) w . 
$$
By applying $P_X$ to the first equation in \eqref{pde-w} and using relation $- P_X \T (w, p)  
= - \Delta w + \nabla q_w  $, as shown in \eqref{PX-Tup}, we obtain
$$
\nabla q_w =P_X f - (z-\Delta) w . 
$$
Combining the two equations above yields
$$
\nabla p = f-P_X f + \nabla q_w . 
$$
In the case $f\in X$, we obtain $p=q_w$ and therefore \eqref{pde-w-err} reduces to 
\begin{align}
\label{pde-w-err-2}
&\|w-w_h\|_{L^2} + h\|\nabla w-\nabla w_h\|_{L^2}\le 
Ch^2 \|w\|_{H^2} ,
\end{align}
where we have used the inequality $\|q_w\|_{H^1}\lesssim \|w\|_{H^2}$. 
\end{remark}

Before proving  Lemma \ref{lem:FEM-errs} for the general case $v\in H^1(D)^d$, we consider the simpler case $g\in X\cap H^1(D)^d$ in the following lemma.  
\begin{lemma}\label{lem:FEM-errs-u0}
It holds that 
\begin{equation}
\label{pde-u0-err}
\int_0^t \|\Phi_h(s)g\|_{L^2}^2\, \d s\le 
Ch^4\|g\|_{H^1}^2 \quad\forall\, g \in X \cap H^1(D)^d. 
\end{equation}
\end{lemma}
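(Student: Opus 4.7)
I would decompose the error through the Stokes--Ritz projection. Setting $u(s) := E(s)g$ and $u_h(s) := E_h(s)P_{X_h}g$, write
$\Phi_h(s)g = u(s) - u_h(s) = \rho(s) + \theta_h(s)$,
with $\rho := u - R_{X_h}u$ and $\theta_h := R_{X_h}u - u_h \in X_h$. The plan is to bound $\int_0^t \|\rho(s)\|_{L^2}^2\,ds$ directly by $Ch^4\|g\|_{H^1}^2$, and then reduce the full time-integrated $L^2$ norm of $e := \rho + \theta_h$ to this bound via a duality argument exploiting the maximal regularity of the discrete Stokes semigroup.

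For the Ritz part, I would apply \eqref{Error-Stokes-Ritz} pointwise in $s$: $\|\rho(s)\|_{L^2} \les h^2\|u(s)\|_{H^2} \les h^2(\|u(s)\|_{L^2}+\|Au(s)\|_{L^2})$. The $L^2$-contractivity of $E(s)$ together with the spectral-calculus identity
\begin{align*}
\int_0^\infty \|A E(s)g\|_{L^2}^2\,ds = \tfrac{1}{2}\|A^{1/2}g\|_{L^2}^2 \les \|g\|_{H^1}^2,
\end{align*}
valid for the positive self-adjoint operator $A$ (and obtained by the same Parseval-type argument that underlies \eqref{A:sg-sta3}), yields the desired $\int_0^t \|\rho(s)\|_{L^2}^2\,ds \les h^4\|g\|_{H^1}^2$.

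For the full error $e$, I would argue by duality in time. Given $\chi \in L^2(0,t;L^2(D)^d)$, solve the discrete backward problem $-\partial_s\phi_h + A_h\phi_h = P_{X_h}\chi$ with $\phi_h(t)=0$. Subtracting the continuous weak equation for $u$ from the discrete one for $u_h$, testing with $\phi_h(s)$, and integrating over $(0,t)$, I would use the Stokes--Ritz identity to rewrite $2(\D(\rho),\D(\phi_h)) = -(\rho,\phi_h) + (q_u - P_{Q_h}q_u, \nabla\cdot\phi_h)$ (with the pressure term matching the one in the error equation exactly), and exploit the orthogonality $(g-P_{X_h}g)\perp X_h \ni \phi_h(0)$ to eliminate the boundary-in-time contribution. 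After integrating $(\partial_s\rho,\phi_h)$ by parts and substituting $\partial_s\phi_h = A_h\phi_h - P_{X_h}\chi$, the telescoping reduces to
\begin{align*}
\int_0^t (e,\chi)\,ds = \int_0^t \bigl(\rho,(I+A_h)\phi_h\bigr)\,ds + \int_0^t \bigl(\rho,(I-P_{X_h})\chi\bigr)\,ds.
\end{align*}
The $h$-independent maximal regularity bound $\|(I+A_h)\phi_h\|_{L^2(0,t;L^2)} \les \|\chi\|_{L^2(0,t;L^2)}$, obtainable from spectral calculus for $A_h$ via Young's convolution inequality applied eigenspace-wise, together with $L^2$-boundedness of $I-P_{X_h}$, then gives $\int_0^t(e,\chi)\,ds \les \|\rho\|_{L^2(0,t;L^2)}\|\chi\|_{L^2(0,t;L^2)}$. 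Taking the supremum over unit $\chi$ yields $\|e\|_{L^2(0,t;L^2)} \les \|\rho\|_{L^2(0,t;L^2)} \les h^2\|g\|_{H^1}$, which squares to the claim.

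\textbf{Main obstacle.} The delicate point is the cancellation $(g-P_{X_h}g,\phi_h(0))=0$ that eliminates the boundary term in the integration by parts. This is essential because the direct $L^2$-estimate $\|R_{X_h}g-P_{X_h}g\|_{L^2}\les h\|g\|_{H^1}$ is one power of $h$ short of what a naive Duhamel estimate would require; the structural orthogonality $\phi_h(0)\in X_h$ versus $e(0)\in X_h^\perp$ is exactly what bridges this gap. Obtaining the telescoping identity cleanly also demands careful bookkeeping of the extra $(\rho,\phi_h)$ term coming from the $I$ factor in the Stokes--Ritz definition \eqref{def-Stokes-Ritz} together with the pressure contribution $(q_u-P_{Q_h}q_u,\nabla\cdot\phi_h)$, which vanishes precisely because $\phi_h \in X_h$.
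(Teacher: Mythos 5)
Your architecture---splitting $\Phi_h(s)g$ through the Stokes--Ritz projection, bounding $\rho=u-R_{X_h}u$ via the spectral identity $\int_0^\infty\|AE(s)g\|_{L^2}^2\,\d s=\tfrac12\|A^{1/2}g\|_{L^2}^2$, and recovering the full error by duality against a discrete backward problem with discrete maximal $L^2$-regularity---is genuinely different from the paper's proof, which tests the error equation directly with $v_h=(I+A_h)^{-1}\widetilde e_h$ (where $\widetilde e_h=P_{X_h}u-u_h$) and closes a Gronwall estimate for $\|(I+A_h)^{-1/2}\widetilde e_h\|_{L^2}^2+\int_0^t\|\widetilde e_h\|_{L^2}^2$. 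Several of your steps are sound: the cancellation $(g-P_{X_h}g,\phi_h(0))=0$, the eigenspace-wise maximal regularity bound, and the bound on $\|\rho\|_{L^2(0,t;L^2)}$ all check out.

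There is, however, a genuine gap at exactly the point that makes this lemma nontrivial: the pressure consistency terms. Your claim that $(q_u-P_{Q_h}q_u,\nabla\cdot\phi_h)$ ``vanishes precisely because $\phi_h\in X_h$'' is false. Membership in $X_h$ only gives $(\eta_h,\nabla\cdot\phi_h)=0$ for $\eta_h\in Q_h$; since $q_u-P_{Q_h}q_u\notin Q_h$, the term equals $(q_u,\nabla\cdot\phi_h)$ and is nonzero in general, because $V_h$ (e.g.\ MINI) is not pointwise divergence-free---this is precisely the obstruction this whole section is built to overcome (cf.\ Remark~\ref{remark41}). Your alternative parenthetical claim, that the Ritz pressure term ``matches the one in the error equation exactly'' and cancels, can be made to work, but only because for the homogeneous problem \eqref{pde-linear-stokes} the evolution pressure $p$ and the harmonic pressure $q_u$ of \eqref{def-Domain-p} coincide (both are harmonic with boundary value $2\D(u)\n\cdot\n$); you neither state nor prove this, and it contradicts your own ``vanishing'' claim, so it is unclear which mechanism your telescoping identity actually relies on. The paper avoids this fragile cancellation entirely: it keeps both remainders $R_q(\widetilde e_h)$ and $R_{q_u}(\widetilde e_h)$ and estimates each by $Ch^4\|q\|_{H^1}^2+\tfrac18\|\widetilde e_h\|_{L^2}^2$, using the superconvergence bound $\|q-P_{Q_h}q\|_{H^{-1}}\lesssim h^2\|q\|_{H^1}$ paired with the smooth function $(I+A)^{-1}P_X\widetilde e_h$, plus the resolvent error estimate of Lemma~\ref{lem:FEM-errs-w} for the remaining $O(h)$ piece. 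To repair your argument you must either justify $p=q_u$ explicitly, or transplant the paper's $H^{-1}$-duality treatment of these terms into your identity; note the latter is not a one-line fix, since the $H^{-1}$ bound must be paired with an $H^1$ function, whereas $\nabla\cdot\phi_h$ is only piecewise polynomial, so a comparison of $\phi_h$ with a continuous counterpart (as in \eqref{err-u0-Rq}) is still needed.
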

\begin{proof}
Let $(u,p)$ and $(u_h,p_h)$ be the solution of \eqref{pde-linear-stokes} 
and \eqref{l-stokes-fem-weak} with $u^0 = g \in X\cap H^1(D)^d$, respectively. 
Then the definition of $\Phi_h(s)=E(s)P_{X} - E_h(s)P_{X_h}$ in \eqref{semi-dis-approx:Phi} implies that 
\begin{equation}
\label{def-Phih=u-u_h}
\Phi_h(s)g = u(s) - u_h(s) . 
\end{equation}
Subtracting \eqref{l-stokes-fem-weak} from \eqref{pde-linear-stokes} yields
\begin{equation}
\left \{
\begin{aligned} 
\big(\partial_t (u - u_h), v_h\big) + 2\big(\D(u - u_h), \D( v_h)\big) - (p - p_h,  \nabla \cdot v_h)&= 0 \quad&&  \forall \, v_h \in V_h ,\\
(\nabla\cdot (u-u_h), \eta_h)&=0 \quad&& \forall \, \eta_h \in Q_h,
\end{aligned}
\right . \notag
\end{equation}
and therefore, choosing $v_h \in X_h$ and using the Ritz projection $R_{X_h}$ defined in \eqref{def-Stokes-Ritz}, we have 
\begin{align}
\label{u0-err-eqn-abs}
&\big(\partial_t (u - u_h), v_h\big) + \big((I+A_h)(R_{X_h}u - u_h),  v_h\big) 
\notag \\ 
&= \big(u - u_h ,  v_h\big) + (q - P_{Q_h} q,  \nabla \cdot v_h)  - (q_u - P_{Q_h}q_u, \nabla \cdot v_h) . 
\end{align} 
{where $q_u$ is defined in \eqref{def-Domain-p} (with $v$ replaced by $u$ there).}
By denoting $\widetilde e_h = P_{X_h}u - u_h \in X_h$ and choosing $v_h = (I + A_h)^{-1} \widetilde e_h \in X_h $ in \eqref{u0-err-eqn-abs}, we obtain 
\begin{align} \label{err-u0-result0}
\frac12 \frac{\d}{\d t}&\, \|(I + A_h)^{-\frac12} \widetilde e_h\|_{L^2}^2 + \|\widetilde e_h\|_{L^2}^2 \\
=&\, 
\|(I + A_h)^{-\frac12} \widetilde e_h\|_{L^2}^2
+\big(P_{X_h}u - R_{X_h}u ,\widetilde e_h\big)  
+ R_{q}(\widetilde e_h) + R_{q_u}(\widetilde e_h) \notag\\
\le&\, \| (I + A_h)^{-1}\widetilde e_h\|_{L^2}^2  
+ C\|R_{X_h}u - P_{X_h}u\|_{L^2}^2
+ \frac14\|\widetilde e_h\|_{L^2}^2 + |R_{q}(\widetilde e_h) | + |R_{q_u}(\widetilde e_h)| \notag\\
\le&\,  \| (I + A_h)^{-1}\widetilde e_h\|_{L^2}^2  
+ Ch^4\|u\|_{H^2}^2  + \frac14 \|\widetilde e_h\|_{L^2}^2  + | R_{q}(\widetilde e_h) | + |R_{q_u}(\widetilde e_h)|  , \notag
\end{align}
where
$$
R_{q}(\widetilde e_h) 
= \big( q - P_{Q_h} q, \nabla \cdot[(I + A_h)^{-1}\widetilde e_h] \big) \quad\mbox{and} 
\quad R_{q_u} (\widetilde e_h) =-\big(q_u - P_{Q_h} q_u, \nabla \cdot[(I + A_h)^{-1}\widetilde e_h ]\big).
$$

The remainders $R_{q}(\widetilde e_h) $ and $R_{q_u}(\widetilde e_h) $ can be estimated by considering $w = (I+A)^{-1}P_X\widetilde e_h$ and $w_h = (I+A_h)^{-1}P_{X_h} \widetilde e_h$, which are the solutions of \eqref{pde-w} and \eqref{pde-wh}, respectively, with $z=1$ and $f=\widetilde e_h$.
Then applying Lemma \ref{lem:FEM-errs-w} yields 
\begin{align*}
\big\|\big[(I + A)^{-1} P_X - (I + A_h)^{-1}P_{X_h}\big] \widetilde e_h \big\|_{H^1} \lesssim h\|\widetilde e_h\|_{L^2} ,
\end{align*}
which implies that 
\begin{align}\label{err-u0-Rq}
|R_{q}(\widetilde e_h) | = &\,  
\big| \big(q - P_{Q_h} q, \nabla \cdot\big[(I + A)^{-1}P_X \widetilde e_h\big] \big) \big| \notag \\
&\, +\big| \big(q - P_{Q_h} q, \nabla \cdot \big[(I + A)^{-1} P_X- (I + A_h)^{-1}P_{X_h}\big] \widetilde e_h \big) \big|  \notag \\
\lesssim \,& \|q - P_{Q_h} q\|_{H^{-1}} \|\nabla \cdot\big[(I + A)^{-1}P_X \widetilde e_h \big]\|_{H^1} \notag \\
&\,  + \|q - P_{Q_h} q\|_{L^2} \big\|\big[(I + A)^{-1} P_X - (I + A_h)^{-1}P_{X_h}\big] \widetilde e_h\big\|_{H^1} 
\notag \\
\lesssim \, & h^2 \|q\|_{H^1} \|(I + A)^{-1}P_X \widetilde e_h\|_{H^2} + h \|q\|_{H^1} h \|\widetilde e_h\|_{L^2} \notag\\
\lesssim\, & h^2 \|q\|_{H^1}\|\widetilde e_h\|_{L^2} \notag\\
\le\,& Ch^4 \|q\|_{H^1}^2 + \frac{1}{8}\|\widetilde e_h\|_{L^2}^2 ,
\end{align}
where we have used the inequality  
$\|q - P_{Q_h} q\|_{H^{-1}} \lesssim h^2\|q\|_{H^1}$ {and Young's inequality}; see Property (2) of Section \ref{section:regularity}. 
Similarly, since $\|q_u\|_{H^1}^2 \lesssim \|u\|_{H^2}$, it follows that 
\begin{align}\label{err-u0-R-eta_u}
| R_{q_u}(\widetilde e_h) | 
\le Ch^4 \|q_u\|_{H^1}^2 + \frac{1}{8}\|\widetilde e_h\|_{L^2}^2
\le Ch^4 \|u\|_{H^2}^2 + \frac{1}{8}\|\widetilde e_h\|_{L^2}^2.
\end{align}
Substituting \eqref{err-u0-Rq}--\eqref{err-u0-R-eta_u} into \eqref{err-u0-result0},  and 
using Gronwall's inequality with $\widetilde e_h(0) = 0$, we obtain  
\begin{align}\label{err-u0-result1}
\|(I + A_h)^{-\frac12}\widetilde e_h(t)\|_{L^2}^2 + \int_0^t\|\widetilde e_h(s)\|_{L^2}^2\,\d s 
\lesssim h^4 \int_0^t \big( \|u\|_{H^2}^2 +  \|q\|_{H^1}^2\big) \,\d s
&\lesssim h^4 \|u^0\|_{H^1}^2 \notag\\
& = h^4 \|g\|_{H^1}^2 ,
\end{align}
where the last inequality is the basic energy inequality for the Stokes equations, which can be obtained by testing \eqref{pde-u0-abs} with $Au$, respectively. 
Since $u(s)-u_h(s)=u(s)-P_{X_h}u(s)+\widetilde e_h$, by using the inequality above and the triangle inequality we obtain 
\begin{align*} 
\int_0^t\|u(s)-u_h(s)\|_{L^2}^2\,\d s 
\lesssim h^4 \|g\|_{H^1}^2 .
\end{align*}
This proves the result of Lemma \ref{lem:FEM-errs-u0} in view of \eqref{def-Phih=u-u_h}.  
\end{proof}
\bigskip

Now we prove Lemma \ref{lem:FEM-errs} by utilizing the result of Lemma \ref{lem:FEM-errs-u0}. 
\bigskip

{\noindent \bf Proof of Lemma \ref{lem:FEM-errs}.}
For $v \in L^2(D)^d$ we denote 
$$
\mbox{$w = (z+A)^{-1}P_X v$ \quad and \quad $w_h = (z+A_h)^{-1}P_{X_h} v$} ,
$$
which are the solutions of \eqref{pde-w} and \eqref{pde-wh} with $f = v$.
%

Since  $-A$ generates a bounded analytic semigroup on $X$, there exists an angle $\phi\in(\frac{\pi}{2},\pi)$ such that the operator $(z+A)^{-1}$ is analytic with respect to $z$ in the sector 
$\Sigma_{\phi}=\{z\in\C\backslash \{0\} : |{\rm arg}(z)|<\phi\}$. Moreover, the semigroup $E(t)=e^{-tA}$ can be expressed in terms of the resolvent operator $(z+A)^{-1}$ through a contour integral on the complex plane, i.e., 
\begin{align}\label{E(t)P_Xv}
E(t)P_Xv = \frac{1}{2\pi i}\int_{\Gamma}(z+A)^{-1}P_Xv e^{zt}\,\d z , \quad\mbox{with}\,\,\, \Gamma =\{1+z : |{\rm arg}(z)|=\phi\} \subset \Sigma_{\phi} . 
\end{align}
Similarly,
\begin{align}\label{E_h(t)P_{X_h}v}
E_h(t)P_{X_h}v = \frac{1}{2\pi i}\int_{\Gamma}(z+A_h)^{-1}P_{X_h}v e^{zt}\,\d z  . 
\end{align}
Therefore, 
\begin{align}\label{ww-transform}
\| \Phi_h(t)v \|_{L^2} &= \| E(t)P_X v - E_h(t)P_{X_h} v \|_{L^2} \notag \\
&= \bigg\| \frac{1}{2\pi i} \int_{\Gamma}\big[(z+A)^{-1}P_X v - (z+A_h)^{-1}P_{X_h} v\big]e^{zt}\,\d z \bigg\|_{L^2} \notag\\
&\lesssim \int_{\Gamma} \|w-w_h\|_{L^2}  e^{{\rm Re} (z) t}\, |\d z| \notag\\
&\lesssim \int_{\Gamma} h^2 \|v\|_{L^2}  e^{{\rm Re} (z) t}\, |\d z| 
\quad\mbox{(This follows from Lemma \ref{lem:FEM-errs-w} with $f=v$)}\notag\\
&\lesssim t^{-1}h^2 \|v\|_{L^2} \quad\mbox{for}\,\,\, t\in(0,T] .  
\end{align}  
This proves \eqref{lem:FEM-errs-TH-l2}.

If $v \in \DD(A^{\frac{1+\rho}{2}})$ then we use inequality \eqref{pde-w-err-2}, which implies that 
\begin{align}\label{ww-errs-M-l2}
\|w-w_h\|_{L^2} +  h \|\nabla (w-w_h)\|_{L^2} 
&\lesssim h^2 \|w\|_{H^2}  \notag\\ 
&= h^2 \|(z+A)^{-1} v\|_{H^2}  \notag\\ 
&\lesssim h^2\|(I+A)(z+A)^{-1} v\|_{L^2} \notag\\
&= h^2\big\|(I+A)^\frac{1-\rho}{2} (z+A)^{-1} (I+A)^\frac{1+\rho}{2}  v\big\|_{L^2}  \notag\\
&\lesssim h^2|z|^{-\frac{1+\rho}{2} }\big\|(I+A)^\frac{1+\rho}{2} v\big\|_{L^2}  \quad \forall \,\,\rho \in [0, 1], 
\end{align}
where we have used the interpolation inequality to get 
\begin{align}\label{ww-errs-A-z-theta}
\|(I+A)^{\theta}(z+A)^{-1}v\|_{L^2} \le C_{\theta} |z|^{-(1-\theta)}\|v\|_{L^2} \quad \forall \,\, \theta \in [0, 1],  \, \, v\in X.
\end{align}
Substituting \eqref{ww-errs-M-l2} into \eqref{ww-transform} and using \eqref{IA-eqi-norms} gives
\begin{align}
\|\Phi_h(t)v \|_{L^2} + h\|\nabla \Phi_h(t)v\|_{L^2} 
&\lesssim h^2 \big\|(I+A)^\frac{1+\rho}{2} v\big\|_{L^2} \int_{\Gamma}|z|^{-\frac{1+\rho}{2} }e^{{\rm Re}(z) t}\, |\d z|. \notag\\
&\lesssim t^{-\frac{1- \rho}{2}}h^2\|v\|_{H^{1+\rho}}\qquad \forall \,\,  \rho \in [0, 1],  \, \, v\in \DD(A^{\frac{1+\rho}{2}}) . \label{ww-errs-M-DF1}
\end{align}
This proves \eqref{lem:FEM-errs-In-u0}.

In order to prove \eqref{lem:FEM-errs-TH}--\eqref{lem:FEM-errs-int2}, we consider the orthogonal decomposition $v = P_X v + \nabla q$ for a function $v\in H^1(D)^d$, where $q\in H^1(D)$ is the weak solution of 
\begin{equation}
\left \{
\begin{aligned} 
\Delta q  & = \nabla \cdot v \qquad &&{\rm in}\, \, D   \\
q & = 0 \qquad &&{\rm on}\, \, \partial D,
\end{aligned}
\right . \notag
\end{equation} 
which has the classical $H^1_0(D)\cap H^2(D)$ elliptic regularity, i.e.,
\begin{align}\label{ww-errs-TH-sta}
\|q\|_{H^2} \lesssim \|\nabla \cdot v\|_{L^2} \lesssim \|v\|_{H^1} . 
\end{align}
Since $\Phi_h(t) \nabla q = E(t) P_X \nabla q - E_h(t)P_{X_h} \nabla q = - E_h(t)P_{X_h} \nabla q \in X_h$, 
by using the self-adjointness of $E_h(t)$ we have
\begin{align*} 
(\Phi_h(t) \nabla q , a_h)\notag
&=-(P_{X_h} \nabla q, E_h(t)a_h)  \\
&=( q , \nabla \cdot [E_h(t) a_h]) \,\,\,\quad\mbox{(since $E_h(t) a_h\in X_h$ and $q=0$ on $\partial D$)} \notag\\
&=( q - q_h , \nabla \cdot [E_h(t) a_h]) \quad\mbox{(for any $q_h\in H^1_0(D)\cap Q_h$)} \notag\\
&=- (E_h(t)  P_{X_h} \nabla (q - q_h), a_h ) \notag\\ 
&=- ((I+A_h)^{\frac12}E_h(t) (I+A_h)^{-\frac12} P_{X_h} \nabla (q - q_h), a_h ) ,
\end{align*}
which implies that
\begin{align} 
\label{ww-errs-des-q}
\Phi_h(t) \nabla q = - (I+A_h)^{\frac12}E_h(t) (I+A_h)^{-\frac12} P_{X_h} \nabla (q - q_h) 
\quad\forall\, q_h\in Q_h\cap H^1_0(D) . 
\end{align}
On the one hand, \eqref{ww-errs-des-q} can be combined with \eqref{Ah:sg-sta-E} to yield 
\begin{align*} 
\|\Phi_h(t) \nabla q\|_{L^2} 
\lesssim 
\inf_{q_h\in Q_h\cap H^1_0(D)} t^{-\frac12} \| (I+A_h)^{-\frac12} P_{X_h} \nabla (q - q_h) \|_{L^2} 
&\lesssim \inf_{q_h\in Q_h\cap H^1_0(D)} t^{-\frac12} \| q - q_h \|_{L^2} \\
&\lesssim t^{-\frac12} h^2 \|q\|_{H^2} ,
\end{align*}
where \eqref{app-pro-q-H10} is used. 
On the other hand, inequality \eqref{ww-errs-des-q} can be combined with \eqref{Ah:sg-sta-int} to yield 
\begin{align*} 
\bigg( \int_0^T \|\Phi_h(t) \nabla q\|_{L^2}^2 \d t \bigg)^{\frac12} 
\lesssim \inf_{q_h\in Q_h\cap H^1_0(D)}\| (I+A_h)^{-\frac12} P_{X_h} \nabla (q - q_h) \|_{L^2} 
&\lesssim \inf_{q_h\in Q_h\cap H^1_0(D)} \| q - q_h \|_{L^2} \\
&\lesssim   h^2 \|q\|_{H^2} . 
\end{align*}
Substituting \eqref{ww-errs-TH-sta} into the two inequalities above, we obtain 
\begin{align}\label{ww-errs-TH-q}  
\|\Phi_h(t) \nabla q\|_{L^2} \lesssim t^{-\frac12}h^2\|v\|_{H^1} \quad \mbox{and} \quad
\bigg(  \int_0^t\|\Phi_h(s) \nabla q\|_{L^2}^2\, \d s\bigg)^{\frac12}  \lesssim h^2\|v\|_{H^1} .
\end{align}

By using inequality \eqref{ww-errs-M-DF1} with $\rho=0$ (with $v$ replaced by $P_Xv$ therein) and Lemma \ref{lem:FEM-errs-u0} with $g=P_Xv$, we have  
\begin{align}\label{ww-errs-DF1}
\|\Phi_h(t)P_X v \|_{L^2} 
\lesssim t^{-\frac{1}{2}}h^2\|v\|_{H^1} \quad\mbox{and}\quad
\bigg( \int_0^t\|\Phi_h(s)P_Xv \|_{L^2}^2 \, \d s \bigg)^{\frac12}
\lesssim h^2\|v\|_{H^1}.
\end{align}
Then, combining \eqref{ww-errs-TH-q} and \eqref{ww-errs-DF1} in the decomposition $v=P_Xv+\nabla q$, we obtain 
\begin{align}
\|\Phi_h(t)v \|_{L^2} 
\lesssim t^{-\frac{1}{2}}h^2\|v\|_{H^1} \quad\mbox{and}\quad
\bigg( \int_0^t\|\Phi_h(s)v \|_{L^2}^2 \, \d s \bigg)^{\frac12}
\lesssim h^2\|v\|_{H^1}.
\end{align}
This proves \eqref{lem:FEM-errs-TH} and \eqref{lem:FEM-errs-int2}. 

It remains to prove \eqref{lem:FEM-errs-int}.
To this end, we use the expressions in \eqref{E(t)P_Xv} and \eqref{E_h(t)P_{X_h}v}, which imply that 
\begin{align*}
\int_0^t \Phi_h(s)v \,\d s
&= \int_0^t \frac{1}{2\pi i}\int_{\Gamma} [ (z+A)^{-1}P_Xv - (z+A_h)^{-1}P_{X_h}v ] e^{zs}\, \d z \d s \notag \\
&= \frac{1}{2\pi i}\int_{\Gamma} z^{-1}[ (z+A)^{-1}P_Xv - (z+A_h)^{-1}P_{X_h}v ] e^{zs}\, \d z  \notag \\
&= \frac{1}{2\pi i} \int_{ \Gamma}z^{-1}[w - w_h]e^{zt}\,\d z.
\end{align*}
By applying Lemma \ref{lem:FEM-errs-w} with $f = v$, we have
\begin{align}\label{ww-errs-int-H1}
&\Big\|\int_0^t \Phi_h(s) v \,\d s \Big\|_{L^2} 
+h\Big\|\nabla\int_0^t \Phi_h(s)v \,\d s \Big\|_{L^2} \notag \\
&\lesssim  \int_{ \Gamma} |z|^{-1} \big(\|w-w_h\|_{L^2} + h\|\nabla w-\nabla w_h\|_{L^2}\big) e^{{\rm Re}(z) t}\, |\d z| \notag\\
&\lesssim h^2\|v\|_{L^2} \int_{ \Gamma} |z|^{-1} e^{{\rm Re}(z) t}\, |\d z| 
\lesssim h^2\|v\|_{L^2} .
\end{align}
This proves \eqref{lem:FEM-errs-int}. 
\hfill\endproof

\subsection{The discrete semigroup in the full discretization}
Let $\lambda_j^*\ge 0$ and $\phi_j^*$, $j=1,2,\dots$, be the eigenvalues and eigenfunctions of the operator $A:D(A)\rightarrow X$. Similarly, let $\lambda_{h,j}^*\ge 0$ and $\phi_{h,j}^*$, $j=1,\dots,M_h$, be the eigenvalues and eigenfunctions of the operator $A_h:X_h\rightarrow X_h$. 

We denote by $R(\tau A_h):X_h\rightarrow X_h$ the linear operator defined by 
\begin{align}\label{def-rational-R}
R(\tau A_h)v_h = \sum_{j = 1}^{M_h} R(\tau \lambda_{h,j}^*)(v_h, \phi_{h,j}^*)\phi_{h,j}^* \quad \mbox{with\, $R(z) = \dfrac{1}{1+z}$\, for\, $z \in \R$,\, $z \neq -1$.}
\end{align} 
The discrete semigroup $\bar E_{h, \tau}$ defined in \eqref{def-E_h-tau} can be written as
$$
\bar E_{h, \tau} v_h = R(\tau A_h)v_h\quad \mbox{for}\quad v_h \in X_h.
$$
As a {time discrete version} of \eqref{Ah:sg-sta-E} and \eqref{Ah:sg-sta-int}, the following estimates hold for any $v_h \in X_h$,
\begin{align}\label{E_ht:sg-sta}
\|(I+A_h)^{\frac{\gamma}{2}}\bar E_{h, \tau}^n v_h\|_{L^2} \lesssim &\, t_n^{-\frac{\gamma}{2}}\|v_h\|_{L^2} 
&& \mbox{for} \,\, 1\le n\le N\,\,\,\mbox{and}\,\,\, \gamma \in [0, 1],\\
\label{E_ht:sg-sta-sum-h1}
\tau\sum_{j = 1}^n\|(I+A_h)^{\frac12}\bar E_{h, \tau}^j v_h\|_{L^2}^2  \lesssim &\, C\|v_h\|_{L^2}^2 
&& \mbox{for} \,\, 1\le n\le N .
\end{align}
\begin{remark}\upshape 
Inequality \eqref{E_ht:sg-sta} is equivalent to  
\begin{align}\label{sg-sta-lambda}
|(1+\lambda_{h,j}^*)^{\frac{\gamma}{2}} R(\tau \lambda_{h,j}^*)^n| \lesssim t_n^{-\frac{\gamma}{2}} 
\quad\mbox{for}\,\,\, \lambda_{h,j}^*\ge 0 \,\,\,\mbox{and}\,\,\, t_n=n\tau \in[0,T] .
\end{align}
The proof of this inequality can be found in \cite[Lemma 7.3]{thomee2007galerkin}). 
Since the function $w_h^j=\bar E_{h, \tau}^j v_h$, $n=1,2,\dots$, are solutions of the equation
$$
\frac{w_h^j-w_h^{j-1}}{\tau} + A_h w_h^j = 0 . 
$$ 
Testing the equation with $w_h^j$ and summing up the results for $j=1,\dots,n$, yield the basic energy inequality 
$$
\max_{1\le j\le n} \frac12 \|w_h^j\|_{L^2}^2 
+ \tau\sum_{j= 1}^n\|A_h^{\frac12} w_h^j\|_{L^2}^2 
\le \frac12 \|w_h^0\|_{L^2}^2 = \frac12 \|v_h\|_{L^2}^2 ,
$$
which implies \eqref{E_ht:sg-sta-sum-h1}. 
\qed
\end{remark}

The error of $\bar E_{h, \tau}^n P_{X_h}$ in approximating $E_h(t_n)P_{X_h} $ is discussed in the following lemma. 

\begin{lemma}\label{lem:D-Fully-errs}
Let $\bar \Phi_{h, \tau}^n v := E_h(t_n)P_{X_h} v -\bar E_{h, \tau}^n P_{X_h} v$ for $v\in L^2(D)^d$.
Then the following estimates hold: 
\begin{align}\label{lem:D-Fully-err-h01}
& \big\|\bar \Phi_{h, \tau}^n v \big\|_{L^2}
\lesssim  \tau^{\frac12}\|v\|_{H^1} &&\forall\, v\in H^1(D)^d, \\ 
\label{lem:D-Fully-err-l2}
& \big\|\bar \Phi_{h, \tau}^n v \big\|_{L^2} 
\lesssim  t_n^{-\frac12}\tau^{\frac12} \|v\|_{L^2}  &&\forall\, v\in L^2(D)^d , \\
\label{lem:D-Fully-err-sum}
& \Big(\tau \sum_{j = 1}^n \big\|\bar \Phi_{h, \tau}^j v\big\|_{L^2}^2 \Big)^{\frac12}
+ \Big\|\tau \sum_{j = 1}^n \nabla \bar \Phi_{h, \tau}^j v\Big\|_{L^2}
\lesssim  \tau^{\frac12} \|v\|_{L^2} &&\forall\, v\in L^2(D)^d . 
\end{align}
\end{lemma}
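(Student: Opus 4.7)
All three estimates in Lemma~\ref{lem:D-Fully-errs} concern the operator $E_h(t_n)P_{X_h}-\bar E_{h,\tau}^n P_{X_h}$, where both factors act on the finite-dimensional space $X_h$ and are functions of the same self-adjoint operator $A_h$. The plan is therefore to diagonalise $A_h$ via its eigenbasis $\{\phi_{h,j}^*\}_{j=1}^{M_h}$, write
\[
\bar\Phi_{h,\tau}^n v \;=\; \sum_{j=1}^{M_h}\bigl[e^{-t_n\lambda_{h,j}^*}-R(\tau\lambda_{h,j}^*)^n\bigr]\,(P_{X_h}v,\phi_{h,j}^*)\,\phi_{h,j}^*,
\]
and reduce each operator bound to an elementary scalar estimate on the rational approximation error
\[
\rho_n(\lambda)\;:=\;e^{-t_n\lambda}-R(\tau\lambda)^n,\qquad \lambda\ge 0,\quad R(z)=\tfrac{1}{1+z}.
\]

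\textbf{Step 1 (scalar kernel estimates).} First, I would establish three pointwise bounds, all of which are standard consequences of the $A$-acceptability of $R$ together with the identity
\[
\rho_n(\lambda)=-\sum_{j=1}^{n}R(\tau\lambda)^{n-j}\bigl[e^{-\tau\lambda}-R(\tau\lambda)\bigr]e^{-t_{j-1}\lambda},\qquad |e^{-\tau\lambda}-R(\tau\lambda)|\lesssim \tfrac{(\tau\lambda)^2}{1+\tau\lambda}:
\]
\begin{align*}
(\mathrm{S}_1)\;&|\rho_n(\lambda)|\lesssim \tau^{1/2}(1+\lambda)^{1/2},\\
(\mathrm{S}_2)\;&|\rho_n(\lambda)|\lesssim t_n^{-1/2}\tau^{1/2},\\
(\mathrm{S}_3)\;&\tau\sum_{j=1}^{n}|\rho_j(\lambda)|^{2}\lesssim \tau.
\end{align*}
Bound $(\mathrm{S}_2)$ refines the well-known $|\rho_n(\lambda)|\lesssim \tau/t_n$ by interpolation with the trivial bound $|\rho_n(\lambda)|\lesssim 1$, and $(\mathrm{S}_1)$ comes from interpolating $(\mathrm{S}_2)$ against the bound $|\rho_n(\lambda)|\lesssim \tau\lambda$ that follows from $e^{-t_n\lambda}$ and $R(\tau\lambda)^n$ being both $\lesssim 1$ and from the telescoping identity. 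Estimate $(\mathrm{S}_3)$ is a consequence of $(\mathrm{S}_2)$ for $j$ away from $0$ together with $|\rho_1(\lambda)|\lesssim \tau\lambda/(1+\tau\lambda)\lesssim 1$.

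\textbf{Step 2 (assembling \eqref{lem:D-Fully-err-h01} and \eqref{lem:D-Fully-err-l2}).} For \eqref{lem:D-Fully-err-l2} I would use $(\mathrm{S}_2)$ directly combined with Parseval in the eigenbasis, which yields $\|\bar\Phi_{h,\tau}^n v\|_{L^2}\lesssim t_n^{-1/2}\tau^{1/2}\|P_{X_h}v\|_{L^2}\lesssim t_n^{-1/2}\tau^{1/2}\|v\|_{L^2}$. For \eqref{lem:D-Fully-err-h01} I would use $(\mathrm{S}_1)$ together with the norm equivalence \eqref{IA-eqi-norms} (applied to the discrete operator, equivalently by Parseval), reducing matters to $\|(I+A_h)^{1/2}P_{X_h}v\|_{L^2}\lesssim \|v\|_{H^1}$. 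The latter follows from the $H^1$-stability of $P_{X_h}$ proved in Subsection \ref{subsec:orth-dicomp-V_h} via Lemma \ref{Lemma:H1PXh}, which is exactly why that lemma was established.

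\textbf{Step 3 (assembling \eqref{lem:D-Fully-err-sum}).} The $L^2$ part follows from $(\mathrm{S}_3)$ by Parseval. The gradient part is the main obstacle: I need
\[
(1+\lambda)\Bigl|\tau\sum_{j=1}^{n}\rho_j(\lambda)\Bigr|^{2}\;\lesssim\;\tau\qquad\text{uniformly in }\lambda\ge 0,\ n\tau\le T,
\]
and here the pointwise bounds $(\mathrm{S}_1)$--$(\mathrm{S}_2)$ are not sharp enough once we sum in $j$. The strategy is to evaluate the two geometric series explicitly,
\[
\tau\sum_{j=1}^{n}e^{-t_j\lambda}=\frac{\tau e^{-\tau\lambda}(1-e^{-t_n\lambda})}{1-e^{-\tau\lambda}},\qquad \tau\sum_{j=1}^{n}R(\tau\lambda)^{j}=\frac{1-R(\tau\lambda)^n}{\lambda},
\]
and to exploit that these approximate $\lambda^{-1}(1-e^{-t_n\lambda})$ to leading order, with the leading $O(\tau)$ correction in both cases having an explicit closed form. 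Subtracting and using $(1+\lambda)|1-e^{-t_n\lambda}-(1-R(\tau\lambda)^n)|^2\lesssim(1+\lambda)|\rho_n(\lambda)|^{2}\lesssim \tau(1+\lambda)^{2}$ together with a separate treatment of the regimes $\tau\lambda\le 1$ (Taylor expansion of $(1+\tau\lambda)^{-j}$ against $e^{-j\tau\lambda}$) and $\tau\lambda\ge 1$ (both series decay rapidly so $|K_n|\lesssim 1/\lambda$) will yield $|\tau\sum_j\rho_j(\lambda)|\lesssim \tau^{1/2}(1+\lambda)^{-1/2}$, which is exactly the required bound.

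\textbf{Where I expect the difficulty.} Steps 1 and 2 are routine; the crux is Step 3, specifically the gradient portion, where a pointwise-in-$j$ triangle inequality cannot give the factor $\tau^{1/2}$ (it would only give $\tau^{1/2}\log(1/\tau)$ through the harmonic sum of $t_j^{-1/2}$). One therefore has to work with the \emph{summed} kernel $\tau\sum_{j=1}^{n}\rho_j(\lambda)$ as a whole and exploit the explicit geometric summation indicated above; this is the real technical content of the lemma and the place where the discrete parabolic smoothing of the scheme is used in full.
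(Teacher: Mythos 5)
Your overall strategy coincides with the paper's: both reduce the three estimates to scalar bounds on the rational approximation error $F_n(z)=e^{-nz}-R(z)^n$ via the spectral decomposition of $A_h$, and both pass from $\|P_{X_h}v\|_{H^1}$ to $\|v\|_{H^1}$ in \eqref{lem:D-Fully-err-h01} using the $H^1$-stability of $P_{X_h}$ from Lemma \ref{Lemma:H1PXh}. The difference is one of self-containment: the paper simply cites Thom\'ee (Theorem 7.1 and inequality (7.22), each with $q=\tfrac12$) for your $(\mathrm{S}_1)$--$(\mathrm{S}_2)$ and cites Kruse's 2014 paper for both halves of \eqref{lem:D-Fully-err-sum}, whereas you propose to prove the scalar kernel bounds and, in particular, the summed gradient bound $(1+\lambda)^{1/2}\bigl|\tau\sum_{j\le n}\rho_j(\lambda)\bigr|\lesssim\tau^{1/2}$ from scratch by explicit geometric summation and a split between the regimes $\tau\lambda\le 1$ and $\tau\lambda\ge 1$. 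That computation does go through (in the regime $\tau\lambda\le1$ one gets $|\tau\sum_j\rho_j|\lesssim\tau$, in the regime $\tau\lambda\ge1$ one gets $\lesssim\min(\tau,\lambda^{-1})$, and both are absorbed by $\tau^{1/2}(1+\lambda)^{-1/2}$), so your Step 3 correctly reproduces the content of the cited Kruse estimate; your diagnosis that a pointwise triangle inequality in $j$ cannot work there is also accurate.

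One genuine flaw in the write-up: your claimed derivation of $(\mathrm{S}_3)$ from $(\mathrm{S}_2)$ does not close. From $(\mathrm{S}_2)$ one only gets $|\rho_j(\lambda)|^2\lesssim\tau/t_j=1/j$, whence $\tau\sum_{j=1}^n|\rho_j|^2\lesssim\tau\sum_{j=1}^n j^{-1}\sim\tau\log(T/\tau)$, which misses \eqref{lem:D-Fully-err-sum} by a logarithm. The fix is to use the first-order nonsmooth-data bound that you already mention as the precursor of $(\mathrm{S}_2)$, namely $|\rho_j(\lambda)|\lesssim\min(1,\tau/t_j)=\min(1,1/j)$ uniformly in $\lambda\ge0$; its squares are summable, giving $\tau\sum_{j=1}^n|\rho_j|^2\lesssim\tau\sum_{j\ge1}j^{-2}\lesssim\tau$ as required. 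With that substitution the proposal is complete.
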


\begin{proof}
It is known that the function $F_n(z) = e^{-nz} - R(z)^n$ has the following upper bound:
\begin{align}\label{D-Ferr-l2-lambda}
|F_n(\tau \lambda_{h,j})| \lesssim (\tau\lambda_{h,j}^*)^{\frac12}  \quad \mbox{for the eigenvalues $\lambda_{h,j}\ge 0$.} 
\end{align}
A proof of this result can be found in \cite[Theorem 7.1]{thomee2007galerkin} (with $q = \frac12$ therein). 
Inequality immediately implies the following error estimate: 
\begin{align}\label{D-Ferr-l2-norm} 
\big\| (E_h(t_n)-\bar E_{h, \tau}^n) v_h \big\|_{L^2} = 
\|F_n(\tau A_h) v_h\|_{L^2} \lesssim \tau^{\frac12}\|v_h\|_{H^1} \quad \forall\, v_h \in X_h .       
\end{align}
In view of the $H^1$-stability of $P_{X_h}$ in \eqref{H1-stability-PXh}, we have 
\begin{align}
\big\| (E_h(t_n)-\bar E_{h, \tau}^n) P_{X_h} v\big\|_{L^2}  
\lesssim \tau^{\frac12}\|P_{X_h} v\big\|_{H^1}  
\lesssim \tau^{\frac12}\|v\|_{H^1} \quad \forall \, v \in H^1(D)^d .
\end{align}
This proves \eqref{lem:D-Fully-err-h01}.

Similarly, the following inequality can be shown (see \cite[inequality (7.22)]{thomee2007galerkin} with $q = \frac12$)
$$
|F_n(\tau \lambda_{h,j})| \lesssim \tau^{\frac12}t_n^{-\frac12} \quad \mbox{for the eigenvalues $\lambda_{h,j}\ge 0$},
$$
which immediately implies that 
\begin{align}
\big\| (E_h(t_n)-\bar E_{h, \tau}^n) P_{X_h} v \big\|_{L^2}  =
\|F_n(\tau A_h) P_{X_h} v \|_{L^2} 
\lesssim \tau^{\frac12}t_n^{-\frac12}\|P_{X_h} v\big\|_{L^2}   \quad \forall \, v \in L^2(D)^d .
\end{align}
This proves \eqref{lem:D-Fully-err-l2}.

The first and second terms in \eqref{lem:D-Fully-err-sum} were estimated in \cite[inequality (4.21) with $\rho=0$]{kruse2014optimal} and \cite[inequality (4.19) with $\rho=1$, replacing $A_h^{-\frac12}P_hx$ by $P_hx$ therein]{kruse2014optimal}, respectively.
\end{proof}

\bigskip
{\noindent \bf Proof of Lemma~\ref{lem:fem-ferrs}.}
By using the expressions 
$$
\Phi_h(t): = E(t)P_{X} - E_h(t)P_{X_h} \quad\mbox{and}\quad 
\bar \Phi_{h, \tau}^n v := E_h(t_n)P_{X_h} v -\bar E_{h, \tau}^n P_{X_h} v , 
$$
and the triangle inequality, we have 
\begin{align}\label{E(s)P_X-E_hjP_{X_h}}
\sum_{j = 1}^{n} \int_{t_{j-1}}^{t_j}\|[ E(s)P_X - \bar E_{h, \tau}^j P_{X_h}] \, v\|_{L^2}^2 \,\d s 
\lesssim\, & \sum_{j = 1}^{n} \int_{t_{j-1}}^{t_j} \| [E_h(s) - E_h(t_j)] P_{X_h}v\|_{L^2}^2 \,  \,  \d s \notag \\
&+ \int_{0}^{t_n} \|\Phi_h(s) v \|_{L^2}^2 \,  \d s 
+  \tau \sum_{j = 1}^{n} \|\bar \Phi_{h, \tau}^j v \|_{L^2}^2. 
\end{align} 
{The first term on the right-hand side of \eqref{E(s)P_X-E_hjP_{X_h}} 
is bounded by using \eqref{Ah:sg-sta-int}, i.e.,}
\begin{align}\label{Ah:sg-sta-sum-l2}
\sum_{j = 1}^n\int_{t_{j-1}}^{t_j}\|[E_h(s) - E_h(t_j)]P_{X_h}v\|_{L^2}^2 \d s 
&= \sum_{j = 1}^n\int_{t_{j-1}}^{t_j}\|(1-e^{-(t_j-s)A_h})e^{-sA_h}P_{X_h}v\|_{L^2}^2 \d s \notag \\
&\lesssim \tau  \int_{0}^{t_n} \| A_h^{\frac12}e^{-sA_h}P_{X_h}v\|_{L^2}^2 \d s \notag \\ 
&\lesssim \tau\|v \|_{L^2}^2 \quad \forall \,\, v\in L^2(D)^d .
\end{align}
{The last two terms 
on the right-hand side of \eqref{E(s)P_X-E_hjP_{X_h}} 
have been estimated in
\eqref{lem:FEM-errs-int2} and \eqref{lem:D-Fully-err-sum}}
respectively,
which imply \eqref{lem:fem-ferrs-L2-int}.

Similarly, \eqref{lem:fem-ferrs-L2} can be proved by using \eqref{lem:FEM-errs-int}, \eqref{lem:D-Fully-err-sum}  and \eqref{Ah:sg-sta-sum-l2}. {This requires using the following Cauchy-Schwarz inequality: 
\begin{align*}
\Big\|\sum_{j = 1}^{n} \int_{t_{j-1}}^{t_j}g(s) \,\d s \Big \|_{L^2}^2 
\lesssim  \Big(\sum_{j = 1}^{n} \int_{t_{j-1}}^{t_j} \|g(s)\|_{L^2} \,\d s \Big)^2 
&\lesssim \Big(\sum_{j = 1}^{n}  \Big[ \tau^{\frac12}  \big(\int_{t_{j-1}}^{t_j} \|g(s)\|_{L^2}^2 \,\d s \big)^\frac12 \Big] \Big)^2 \\
&\lesssim \sum_{j = 1}^{n}  \int_{t_{j-1}}^{t_j} \|g(s)\|_{L^2}^2 \,\d s . 
\end{align*}
Then \eqref{lem:fem-ferrs-L2} can be proved by using the triangle inequality with 
$
\Phi_h(t) = E(t)P_{X} - E_h(t)P_{X_h}$ and 
$\bar \Phi_{h, \tau}^n  = E_h(t_n)P_{X_h}  -\bar E_{h, \tau}^n P_{X_h} 
$, i.e., 
\begin{align*}
&\Big\|\sum_{j = 1}^{n} \int_{t_{j-1}}^{t_j} [E(s)P_X - \bar E_{h, \tau}^j P_{X_h}] \, v \,\d s \Big \|_{L^2}^2  \\
\lesssim\, &  \Big\| \sum_{j = 1}^{n} \int_{t_{j-1}}^{t_j}  [E_h(s) - E_h(t_j)] P_{X_h}v \,  \,  \d s  \Big \|_{L^2}^2  
+ \Big\| \int_{0}^{t_n} \Phi_h(s) v  \,  \d s \Big \|_{L^2}^2
+  \Big\| \tau \sum_{j = 1}^{n} \bar \Phi_{h, \tau}^j v \Big \|_{L^2}^2\\
\lesssim\, &   \sum_{j = 1}^{n} \int_{t_{j-1}}^{t_j} \| [E_h(s) - E_h(t_j)] P_{X_h}v  \|_{L^2}^2  \,  \,  \d s   
+ \Big\| \int_{0}^{t_n} \Phi_h(s) v  \,  \d s \Big \|_{L^2}^2
+  \tau \sum_{j = 1}^{n} \| \bar \Phi_{h, \tau}^j v \|_{L^2}^2,
\end{align*}
where first term on the right-hand side of the above inequality can be estimated by using \eqref{Ah:sg-sta-sum-l2}, while the second and third terms can be estimated by using \eqref{lem:FEM-errs-int} and \eqref{lem:D-Fully-err-sum}, respectively. 
}

Similarly, we have 
\begin{align}\label{E(s)P_X-E_hjP_{X_h}-2}
\Big\|\sum_{j = 1}^{n} \int_{t_{j-1}}^{t_j} \nabla[E(s)P_X - \bar E_{h, \tau}^j P_{X_h}]  \, v \,  \d s \Big \|_{L^2}^2 
\lesssim\, &   \Big\|\sum_{j = 1}^{n} \int_{t_{j-1}}^{t_j} \nabla[E_h(s) - E_h(t_j)] P_{X_h} \, v \,  \d s \Big \|_{L^2}^2 
\notag \\
& + \Big\|\int_{0}^{t_n} \nabla \Phi_h(s)\, v \,  \d s \Big \|_{L^2}^2 
+  \Big\|\tau \sum_{j = 1}^{n} \nabla\bar \Phi_{h, \tau}^j v \Big \|_{L^2}^2 \notag\\
\lesssim\, &(\tau + h^2) \|v\|_{L^2}^2. 
\end{align} 
The first term on the right-hand side of \eqref{E(s)P_X-E_hjP_{X_h}-2} was estimated in \cite[p. 236, with $\rho=1$ and $P_hx$ replaced by $A_h^{\frac12}P_hx$]{kruse2014optimal}, i.e., 
\begin{align}
\Big\|\sum_{j = 1}^n\int_{t_{j-1}}^{t_j}[E_h(s) -  E_h(t_j)](I+A_h)^{\frac{1}{2}} P_{X_h}v \d s\Big\|_{L^2}^2  
\lesssim \tau\| P_{X_h} v \|_{L^2}^2  \lesssim \tau\| v \|_{L^2}^2 . \notag
\end{align}
The second and third terms on the right-hand side of \eqref{E(s)P_X-E_hjP_{X_h}-2} have been estimated in \eqref{lem:FEM-errs-int} and  \eqref{lem:D-Fully-err-sum}, respectively. This proves \eqref{lem:fem-ferrs-H1}. 
\hfill\endproof

\section{Error analysis for the stochastic problem} \label{sec:spde-fully-fem}

In this section, we present error estimates for the fully discrete method \eqref{fully-sFEM-weak} for the stochastic Stokes equations. 
Some estimates of the noise term in the Hilbert--Schmidt norm are presented in Subsection \ref{Section:HS-estimates}, and the error estimates are presented in Section \ref{Section:Proof}.




\subsection{Estimates in the Hilbert--Schmidt norm}\label{Section:HS-estimates}

\begin{lemma}\label{lem:B-noise}
Under Assumptions \ref{ass-W}--\ref{ass-u0}, the operator $B(v): L^2(D)^d\rightarrow L^2(D)^d$ satisfies
\begin{align}\label{lem:B-noise-holder}
\|E(t)P_X[B(u) - B(v)]\|_{ \L_2^0}^2 \lesssim t^{-\frac 12} \,  \|u - v\|_{L^2}^2  \quad \forall\, u, v \in L^2(D)^d,
\end{align}
and
\begin{align}\label{lem:B-noise-sta2}
\|B(u) \|_{ \L_2^0}^2 &\lesssim 1 + \|u\|_{H^{\beta}}^2 &&\forall\, \,u \in H^{\beta}(D)^d, \,\,\, \mbox{{$\beta\in(\frac{d}{2},2)$}},&\\
\label{lem:B-noise-sta}
\|(I+A)^{\frac 12}P_X B(u)\|_{ \L_2^0}^2 &\lesssim 1 +  \|u\|_{H^\beta}^2 &&\forall\,\, u \in H^\beta(D)^d, \,\,\, \mbox{{$\beta\in(\frac{d}{2},2)$}}.&
\end{align}
\end{lemma}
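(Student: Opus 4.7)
All three inequalities unfold the Hilbert--Schmidt norm through its definition
\[
\|\Phi\|_{\L_2^0}^2 = \sum_\ell \mu_\ell \|\Phi \phi_\ell\|_{L^2}^2,
\]
reducing each estimate to a bound on the ``column'' $\Phi\phi_\ell$ that can be recombined using the Lipschitz/growth conditions \eqref{ass-con-nosie-1}--\eqref{ass-con-nosie-2}.

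For \eqref{lem:B-noise-holder}, the plan is to use the smoothing of $E(t)$ to trade a negative-order spatial norm for a time singularity. Since $(I+A)^{\pm 1/4}$ and $E(t)$ are defined via the spectral calculus of the same self-adjoint operator, they commute, so for any $g\in L^2(D)^d$ I would write
\[
E(t)P_X g = (I+A)^{1/4} E(t)\, (I+A)^{-1/4} P_X g .
\]
Applying \eqref{A:sg-sta1} with $\gamma = 1/4$ gains the factor $t^{-1/4}$, the dual norm equivalence \eqref{IA-eqi-norms-dual} with $s = 1/2$ converts $\|(I+A)^{-1/4} P_X g\|_{L^2}$ into $\|P_X g\|_{H^{-1/2}}$, and the $H^{-1/2}$-boundedness of $P_X$ from \eqref{P_X-bound} finally gives
\[
\|E(t) P_X g\|_{L^2} \lesssim t^{-1/4} \|g\|_{H^{-1/2}}.
\]
Setting $g = [B(u)-B(v)]\phi_\ell$, squaring, summing in $\ell$ against $\mu_\ell$ and invoking \eqref{ass-con-nosie-1} then yields the required $t^{-1/2}\|u-v\|_{L^2}^2$ bound.

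The remaining two estimates are more direct. For \eqref{lem:B-noise-sta2}, the termwise inequality $\|\cdot\|_{L^2} \le \|\cdot\|_{H^1}$ gives $\|B(u)\|_{\L_2^0} \le \|B(u)\|_{\L_2^0(\mathbb{L}^2,\mathbb{H}^1)}$, and the right-hand side is controlled by $1+\|u\|_{H^\beta}$ via \eqref{ass-con-nosie-2}. For \eqref{lem:B-noise-sta} I would use the norm equivalence \eqref{IA-eqi-norms} with $s=1$ to pass $\|(I+A)^{1/2} P_X B(u)\phi_\ell\|_{L^2} \lesssim \|P_X B(u)\phi_\ell\|_{H^1}$, strip off $P_X$ by the $H^1$-boundedness \eqref{P_X-bound-s}, and recognize the resulting sum as $\|B(u)\|_{\L_2^0(\mathbb{L}^2,\mathbb{H}^1)}^2$, once more bounded via \eqref{ass-con-nosie-2}.

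Aside from some bookkeeping to verify that $P_X B(u)\phi_\ell \in \DD(A^{1/2}) = X \cap H^1(D)^d$ (so that the fractional-power equivalences apply) and that $P_X[B(u)-B(v)]\phi_\ell$ sits in the domain on which the dual equivalence is used, there is no real obstacle here; the lemma is essentially a dictionary translating the noise hypotheses into the forms used downstream in the semigroup-based error analysis. The only place where a numerical coincidence is needed is in \eqref{lem:B-noise-holder}, where the exponent $1/4$ of the smoothing must pair with the exponent $1/2$ of the dual Sobolev norm so that, after squaring and summing, the singularity on the right-hand side is exactly $t^{-1/2}$.
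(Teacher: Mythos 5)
Your proposal is correct and follows essentially the same route as the paper: unfolding the Hilbert--Schmidt norm, inserting $(I+A)^{1/4}E(t)(I+A)^{-1/4}$ and combining \eqref{A:sg-sta1} with $\gamma=\tfrac14$, the dual norm equivalence \eqref{IA-eqi-norms-dual} with $s=\tfrac12$, the boundedness of $P_X$, and the noise conditions \eqref{ass-con-nosie-1}--\eqref{ass-con-nosie-2}. The two easier estimates are likewise handled exactly as in the paper, via the termwise comparison with the $\L_2^0(\mathbb{L}^2,\mathbb{H}^1)$ norm.
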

\begin{proof}
{
By using \eqref{IA-eqi-norms-dual} with $s = \frac12$, \eqref{A:sg-sta1} with $\gamma = \frac14$, \eqref{P_X-bound}
and \eqref{ass-con-nosie-1},  we have }
\begin{align}\label{B-noise-holder-est}
&\|E(t)P_X(B(u) - B(v))\|_{ \L_2^0}^2 \notag\\
&=  \sum_\ell \mu_\ell \|E(t)P_X[(B(u) - B(v))\phi_\ell]\|_{L^2}^2 \notag \\
& \lesssim \sum_\ell \mu_\ell \|(I+A)^{\frac 14}E(t) (I+A)^{-\frac 14}P_X[(B(u) - B(v))\phi_\ell]\|_{L^2}^2 
\notag \\
& {\lesssim  t^{-\frac 12} \sum_\ell \mu_\ell  \, \|(B(u) - B(v))\phi_\ell\|^2_{H^{-\frac12}}}
\notag\\
& {\lesssim  t^{-\frac 12}  \|(B(u) - B(v))\|_{{\L_2^0(\mathbb{L}^2, \, \mathbb{H}^{-1/2})}}}
\notag\\ 
& \lesssim t^{-\frac 12}  \, \|u - v\|_{L^2}^2, 
\end{align}
{and by using \eqref{ass-con-nosie-2}, we have}
\begin{align}\label{L20-sta-L2}
\|B(u)\|_{ \L_2^0}^2
& \le \|B(u)\|_{{\L_2^0(\mathbb{L}^2, \, \mathbb{H}^1)}}^2
\lesssim 1+\|u\|_{H^\beta}^2,
\end{align}
which imply \eqref{lem:B-noise-holder}--\eqref{lem:B-noise-sta2}. Similarly, 
\begin{align}\label{L20-sta-H1}
\|(I+A)^{\frac 12}P_X B(u)\|_{ \L_2^0}^2
&= \sum_\ell \mu_\ell \|(I+A)^{\frac 12}P_X B(u)\phi_\ell\|_{L^2}^2 \\
& \le \|B(u)\|_{{\L_2^0(\mathbb{L}^2, \, \mathbb{H}^1)}}^2
\lesssim 1+\|u\|_{H^\beta}^2, \notag
\end{align}
This proves \eqref{lem:B-noise-sta}.
\end{proof}

\begin{remark}\upshape 
As a result of the estimates in Lemma \ref{lem:B-noise}, the regularity results in {Proposition~\ref{thm-u-stability}} imply that  
\begin{align} \label{L20:Sta-B}
&\sup_{t \in [0,T]}  \E\|B(u(t))\|_{ \L_2^0}^2 + \sup_{t \in [0,T]}  \E\|(I+A)^{\frac {1}{2}}P_X B(u(t))\|_{ \L_2^0}^2
\lesssim 1. 
\end{align}
\end{remark}

The following stability estimates for the numerical solution can be proved by using Lemma \ref{lem:B-noise} and will be used in the error analysis. The proof is omitted here, and the details can be found in Appendix C.  
\begin{lemma}\label{Lemma:uh-energy}
Under Assumptions \ref{ass-W}--\ref{ass-u0}, the numerical solution determined by the fully discrete method \eqref{fully-sFEM} satisfies the following energy inequality: 
\begin{align}\label{sFEM-uhn-sta}
\max_{1 \le n \le N}  \E \|u_h^n\|_{H^\frac 12}^2 
+ \sum_{n = 1}^N \E \|u_h^n - u_h^{n-1}\|_{L^2}^2 
+ \tau\sum_{n = 1}^N \E \|u_h^n\|_{H^1}^2
\lesssim 1. 
\end{align}   
\end{lemma}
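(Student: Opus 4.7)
I would combine a discrete energy identity with a mild-form semigroup argument. In the first step, I test the discrete weak formulation \eqref{fully-sFEM-weak-Xh} with $v_h = u_h^n \in X_h$ and use the polarization identity $2(u_h^n - u_h^{n-1}, u_h^n) = \|u_h^n\|_{L^2}^2 - \|u_h^{n-1}\|_{L^2}^2 + \|u_h^n - u_h^{n-1}\|_{L^2}^2$ to turn the scheme into the pathwise identity
\begin{equation*}
\tfrac{1}{2}\bigl(\|u_h^n\|_{L^2}^2 - \|u_h^{n-1}\|_{L^2}^2 + \|u_h^n - u_h^{n-1}\|_{L^2}^2\bigr) + 2\tau\|\D(u_h^n)\|_{L^2}^2 = \tau(f(t_n),u_h^n) + (B(u_h^{n-1})\Delta W_n, u_h^n).
\end{equation*}
Decomposing the noise term as $(B(u_h^{n-1})\Delta W_n, u_h^{n-1}) + (B(u_h^{n-1})\Delta W_n, u_h^n - u_h^{n-1})$ and taking expectation, the first piece vanishes by the $\F_{t_{n-1}}$-measurability of $u_h^{n-1}$ and the mean-zero property of $\Delta W_n$, while Young's inequality together with It\^o's isometry bound the second by $\tfrac{1}{4}\E\|u_h^n - u_h^{n-1}\|_{L^2}^2 + \tau\E\|B(u_h^{n-1})\|_{\L_2^0}^2$.

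Next, I sum over $n$, use Korn's inequality to replace $\|\D(u_h^n)\|_{L^2}^2$ by $\|u_h^n\|_{H^1}^2$, and apply discrete Gronwall to obtain $\max_n \E\|u_h^n\|_{L^2}^2 + \sum_n \E\|u_h^n - u_h^{n-1}\|_{L^2}^2 + \tau \sum_n \E\|u_h^n\|_{H^1}^2 \les 1$. For the pointwise-in-$n$ $H^{1/2}$ bound I iterate \eqref{fully-sFEM} into the discrete mild formula
\begin{equation*}
u_h^n = \bar E_{h,\tau}^n u_h^0 + \tau \sum_{j=1}^n \bar E_{h,\tau}^{n-j+1} P_{X_h} f(t_j) + \sum_{j=1}^n \bar E_{h,\tau}^{n-j+1} P_{X_h}\bigl[B(u_h^{j-1})\Delta W_j\bigr],
\end{equation*}
apply $(I + A_h)^{1/4}$, which is equivalent to the $H^{1/2}$-norm on $X_h$ in view of \eqref{IA-eqi-norms}, and use the discrete analytic-semigroup bound \eqref{E_ht:sg-sta} with $\gamma=1/2$ to handle the deterministic pieces via the integrable kernel $\tau \sum_j t_{n-j+1}^{-1/4}$. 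The stochastic contribution is treated with It\^o's isometry, which reduces matters to bounding a sum of the form $\tau \sum_j t_{n-j+1}^{-1/2}\,\E\|B(u_h^{j-1})\|_{\L_2^0}^2$ whose kernel has an integrable singularity.

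The main obstacle is to control $\E\|B(u_h^{j-1})\|_{\L_2^0}^2$ by the quantities already bounded at the previous step: Lemma \ref{lem:B-noise} provides only $\|B(v)\|_{\L_2^0}^2 \les 1 + \|v\|_{H^\beta}^2$ with $\beta > d/2 \ge 1$, a regularity strictly stronger than the $H^1$ time-averaged bound afforded by the energy identity. My plan is to couple the $\L_2^0(\mathbb{L}^2, \mathbb{H}^{-1/2})$ Lipschitz estimate \eqref{ass-con-nosie-1}, which requires only $L^2$ control of $v$, with the $H^1$ trace-class regularity of the $Q$-process encoded in Assumption \ref{ass-W}, through a Hilbert--Schmidt interpolation of the type
$\|B(v)\|_{\L_2^0}^2 \les \|B(v)\|_{\L_2^0(\mathbb{L}^2, \mathbb{H}^{-1/2})}^{4/3}\,\|B(v)\|_{\L_2^0(\mathbb{L}^2, \mathbb{H}^1)}^{2/3}$,
and then to absorb the resulting $H^\beta$ factor into the dissipation $\tau \sum_n \E\|u_h^n\|_{H^1}^2$ via Young's inequality combined with a suitable fractional interpolation. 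Pinning down this balance of regularities is the most technical part of the argument.
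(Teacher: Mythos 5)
Your skeleton coincides with the paper's: the $H^{1/2}$ bound comes from the iterated discrete mild formula, $(I+A_h)^{1/4}$, the smoothing estimate \eqref{E_ht:sg-sta} and It\^o's isometry, while the remaining two terms come from testing with $u_h^n$, the polarization identity, the martingale property of $(B(u_h^{n-1})\Delta W_n,u_h^{n-1})$, Korn's inequality and discrete Gronwall. However, the step you yourself flag as unresolved --- controlling $\E\|B(u_h^{j})\|_{\L_2^0}^2$ --- is a genuine gap, and the route you sketch does not close it. The interpolation $\|B(v)\|_{\L_2^0}^2\lesssim \|B(v)\|_{\L_2^0(\mathbb{L}^2,\mathbb{H}^{-1/2})}^{4/3}\|B(v)\|_{\L_2^0(\mathbb{L}^2,\mathbb{H}^{1})}^{2/3}$ reintroduces, via \eqref{ass-con-nosie-2}, a factor $(1+\|u_h^j\|_{H^\beta})^{2/3}$ with $\beta>\frac{d}{2}\ge 1$. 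The discrete dissipation only provides a time-averaged $H^1$ bound, which cannot dominate an $H^\beta$ norm with $\beta>1$ uniformly in $h$ (an inverse inequality would cost negative powers of $h$), so no Young/fractional-interpolation balancing can absorb this term. Moreover your ordering is backwards: you run the energy identity first, but that identity already needs $\E\|B(u_h^{n-1})\|_{\L_2^0}^2$, which the $L^2$/$H^1$ quantities it produces cannot supply.

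The paper resolves this by proving the $H^{1/2}$ estimate \emph{first} and making it self-contained: using Assumption \ref{ass-W} (the trace condition \eqref{ass-con-AQ}) it bounds $\E\|B(u_h^i)\|_{\L_2^0}^2\lesssim 1+\E\|u_h^i\|_{H^{1/2}}^2$, so the mild-formula estimate becomes $\E\|u_h^n\|_{H^{1/2}}^2\lesssim 1+\E\|u^0\|_{H^{1/2}}^2+\tau\sum_{i=0}^{n-1}(t_n-t_i)^{-1/2}\E\|u_h^i\|_{H^{1/2}}^2$, which closes by the generalized discrete Gronwall inequality with weakly singular kernel (\cite[Lemma A.4]{kruse2014strong}). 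The resulting uniform bound $\max_n\E\|u_h^n\|_{H^{1/2}}^2\lesssim 1$ is then fed into the energy identity to control the noise term there, yielding the remaining two sums. To repair your argument you would need to replace your interpolation plan by a bound on $\|B(v)\|_{\L_2^0}$ in terms of $\|v\|_{H^{1/2}}$ (exploiting \eqref{ass-con-AQ}) and swap the order of the two steps.
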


\subsection{Error estimates for the velocity}
\label{Section:Proof}


By iterating \eqref{fully-sFEM} with respect to $n$, the full discrete method can be rewritten as
\begin{equation}\label{fully-sFEM-sum}
u_h^n
=  \bar  E_{h,\tau}^n P_{X_h}u_0 +  \tau\sum_{i = 0}^{n-1} \bar E_{h, \tau}^{n-i} P_{X_h}f(t_{i + 1})
+ \sum_{i = 0}^{n-1} \bar E_{h, \tau}^{n-i} P_{X_h} [B(u_h^i) \Delta W_{i+1}]. 
\end{equation} 
Then, after subtracting \eqref{fully-sFEM-sum} from \eqref{SPDE-mild}, we obtain the following error equation:
\begin{align}\label{error-expr}
u(t_n) - u_h^n = \,&(E(t_n)P_X - \bar E_{h, \tau}^n P_{X_h}) u_0 \notag \\
&+  \sum_{i = 0}^{n-1} \int_{t_i}^{t_{i+1}} [E(t_n -s)P_X f(s) - \bar E_{h, \tau}^{n-i} P_{X_h}f(t_{i+1})]\d s\notag\\
&+  \sum_{i = 0}^{n-1} \int_{t_i}^{t_{i+1}}[E(t_n -s)P_X B(u(s)) - \bar E_{h, \tau}^{n-i} P_{X_h}B(u_h^i) ] \d W(s)
\notag\\
=&\!:T_1 + T_2 +T_3, 
\end{align}
which implies that 
\begin{align}\label{sfem-ferrs:u-inq}
\E \|u(t_n) - u_h^n\|_{L^2}^2
\lesssim \sum_{j = 1}^3  \E \|T_j\|_{L^2}^2.
\end{align}
The three terms are estimated below separately below. 

Since $T_1=\Phi_h(t_n) u^0+\bar \Phi_{h, \tau}^n u^0$, by applying \eqref{lem:FEM-errs-In-u0} and \eqref{lem:D-Fully-err-h01} with $\rho = 1$  and $v =   u_0 \in \DD(A)$, we obtain 
\begin{align}\label{sfem-ferrs:u-T1} 
\E \|T_1\|_{L^2}^2
\lesssim \E \|\Phi_h(t_n) u^0\|_{L^2}^2 + \E \|\bar \Phi_{h, \tau}^n u^0\|_{L^2}^2 
\lesssim (\tau + h^4)\E \|u^0\|_{H^2}^2. 
\end{align}

By using the triangle inequality, $T_2$ can be further decomposed into the following two parts:  
\begin{align}\label{sfem-ferrs:u-T2}
\E \|T_2\|_{L^2}^2
\lesssim\, & \E \Big\|\sum_{i = 0}^{n-1} \int_{t_i}^{t_{i+1}} E(t_n -s)P_X [f(s) - f(t_{i+1})]\d s \Big \|_{L^2}^2\\
&+ \E \Big\|\sum_{i = 0}^{n-1} \int_{t_i}^{t_{i+1}} [E(t_n -s)P_X - \bar E_{h, \tau}^{n-i} P_{X_h}] f(t_{i+1})\d s \Big \|_{L^2}^2\notag\\
=&\!: T_{21} + T_{22}.\notag
\end{align}
By using the H\"older continuity of $f(t)$ in \eqref{ass-con-f-lip}, we have 
\begin{align}\label{sfem-ferrs:u-T21}
T_{21}  
\lesssim \sum_{i = 0}^{n-1} \int_{t_i}^{t_{i+1}}\E \| f(s) - f(t_{i+1}) \|_{L^2}^2\d s 
\lesssim \sum_{i = 0}^{n-1} \int_{t_i}^{t_{i+1}}(t_{i+1}-s) \d s 
\lesssim \tau. 
\end{align}
Through a change of variables $\sigma = t_n - s$ and $j = n - i $, we obtain by using the triangle inequality 
\begin{align}\label{sfem-ferrs:u-T22}
T_{22} =\,&\E \Big\|\sum_{j = 1}^{n} \int_{t_{j-1}}^{t_j} [E(\sigma)P_X - \bar E_{h, \tau}^j P_{X_h}] f(t_{n-j+1})\d \sigma \Big \|_{L^2}^2 \notag\\
\lesssim\, & \E \Big\|\sum_{j = 1}^{n} \int_{t_{j-1}}^{t_j} [E(\sigma) - E(t_j)] P_X[f(t_{n-j+1}) - f(t_{n})]\d \sigma \Big \|_{L^2}^2 \notag\\
&+\E \Big\|\sum_{j = 1}^{n} \int_{t_{j-1}}^{t_j} [E(t_j)P_X - \bar E_{h, \tau}^j P_{X_h}] [f(t_{n-j+1}) - f(t_{n})]\d \sigma \Big \|_{L^2}^2 \notag\\
&+\E \Big\|\sum_{j = 1}^{n} \int_{t_{j-1}}^{t_j} [E(\sigma)P_X - \bar E_{h, \tau}^j P_{X_h}]  f(t_{n})\d \sigma \Big \|_{L^2}^2 \notag\\
=&\!:T_{22}^a  + T_{22}^b + T_{22}^c. 
\end{align} 
The term $T_{22}^a$ can be estimated by using \eqref{A:sg-sta1} with $\gamma = \frac12$, \eqref{A:sg-sta2} with $\mu = \frac12$, and the H\"older continuity of $f$ in \eqref{ass-con-f-lip}, i.e., 
\begin{align}\label{sfem-ferrs:u-T22a}
T_{22}^a
\lesssim&\, \sum_{j = 1}^{n} \int_{t_{j-1}}^{t_j} \| (I+A)^{\frac12} E(\sigma) (I+A)^{-\frac12} [I- E(t_j - \sigma)] P_X[f(t_{n-j+1}) - f(t_{n})]  \|_{L^2}^2 \d \sigma \notag\\
\lesssim&\, \sum_{j = 1}^{n} \int_{t_{j-1}}^{t_j}\sigma^{-1} (t_j - \sigma)
\|f(t_{n-j+1}) - f(t_{n}) \|_{L^2}^2\d \sigma\\
\lesssim&\, \tau\sum_{j = 1}^{n} \int_{t_{j-1}}^{t_j}\sigma^{-1} 
(t_{n} - t_{n-j+1} ) \d \sigma 
\lesssim  \tau\sum_{j = 1}^{n} \int_{t_{j-1}}^{t_j}\sigma^{-1} \sigma \d \sigma 
\lesssim  \tau .\notag
\end{align}
{where the conditions $t_j - \sigma \le \tau$ and $t_n - t_{n-j+1} = t_{j - 1} \leq \sigma$ are satisfied for $\sigma \in [t_{j-1}, t_j]$, ensuring the integrability in the last line.}

The term $T_{22}^b$ can be estimated by using \eqref{lem:FEM-errs-TH-l2}, \eqref{lem:D-Fully-err-l2} and the Assumption \ref{ass-con-f-lip} shows
\begin{align}\label{sfem-ferrs:u-T22b}
T_{22}^b
\lesssim &\, \E\Big(\sum_{j = 1}^{n} \int_{t_{j-1}}^{t_j}  \|[\Phi_h(t_j) +\bar \Phi_{h, \tau}^j] [f(t_{n-j+1}) - f(t_{n})]  \|_{L^2}\d \sigma \Big)^2\\
\lesssim &\, \E\Big(\sum_{j = 1}^{n} \int_{t_{j-1}}^{t_j} (t_j^{-1}h^2 + t_j^{-\frac12}\tau^{\frac12}) \|f(t_{n-j+1}) - f(t_{n})\|_{L^2}\d \sigma\Big)^2\notag\\
\lesssim &\, (\tau + h^4)\Big(\sum_{j = 1}^{n} \int_{t_{j-1}}^{t_j} t_j^{-1}(t_{n} - t_{n-j+1} )^{\frac12} \d \sigma\Big)^2\notag\\
\lesssim &\, {(\tau + h^4)\Big(\sum_{j = 1}^{n} \int_{t_{j-1}}^{t_j} \sigma^{-1} \sigma^{\frac12} \d \sigma\Big)^2}
\lesssim  \tau + h^4.\notag
\end{align}
The term $T_{22}^c$ can be estimated by applying \eqref{lem:fem-ferrs-L2} directly, i.e., 
\begin{align}\label{sfem-ferrs:u-T22c}
T_{22}^c \lesssim (\tau + h^4) \|f(t_n)\|_{L^2}^2 \lesssim \tau + h^4 .
\end{align}
Substituting \eqref{sfem-ferrs:u-T21}--\eqref{sfem-ferrs:u-T22c} into \eqref{sfem-ferrs:u-T2} yields 
\begin{align}\label{sfem-ferrs:u-T2-sum}
\E \|T_2\|_{L^2}^2 \lesssim \tau + h^4 .
\end{align}

It remains to estimate the term $T_3$ in \eqref{error-expr}.
By using \eqref{Ito_isometry} and a change of variables $\sigma=t_n - s$ and $j = n - i$, we can decompose $T_3$ into three parts as follows:  
\begin{align}\label{sfem-ferrs:u-T3}
\E \|T_3\|_{L^2}^2 
=\,& \E \sum_{i = 0}^{n-1} \int_{t_i}^{t_{i+1}}\|E(t_n -s)P_X B(u(s)) - \bar E_{h, \tau}^{n-i} P_{X_h}B(u_h^i) \|_{ \L_2^0}^2 \d s
\notag \\
= \, &\E \sum_{j = 1}^n \int_{t_{j - 1}}^{t_j}\|[E(\sigma)P_X B(u(t_n - \sigma)) - \bar E_{h, \tau}^j P_{X_h}B(u_h^{n - j})  \|_{ \L_2^0}^2 \d \sigma\notag\\
\lesssim\, &\E \sum_{j = 1}^n \int_{t_{j - 1}}^{t_j}\|E(\sigma)P_X [B(u(t_n - \sigma)) - B(u(t_{n-j}))]\|_{ \L_2^0}^2  \d \sigma \notag\\
&+ \E \sum_{j = 1}^n \int_{t_{j - 1}}^{t_j}\|[E(\sigma)P_X - \bar E_{h, \tau}^j P_{X_h}] B(u(t_{n-j})) \|_{ \L_2^0}^2 \d \sigma\notag \\
&+\tau \E \sum_{j = 1}^n \|\bar E_{h, \tau}^jP_{X_h}[B(u(t_{n-j})) - B(u_h^{n-j})]\|_{ \L_2^0}^2\notag\\
=&\!: T_{31} + T_{32} + T_{33}.
\end{align}
The three terms $T_{31}$, $T_{32}$ and $T_{33}$ are estimated separately below. 

The term $T_{31}$ can be estimated by using \eqref{lem:B-noise-holder} and H\"older continuity \eqref{SPDE-u-holder}, which imply that 
\begin{align}\label{sfem-ferrs:u-T31}
T_{31}
\le\, & \sum_{j = 1}^n \int_{t_{j - 1}}^{t_j}\sigma^{-\frac12}\E\|u(t_n - \sigma)- u(t_{n-j})\|_{L^2}^2\d \sigma\\
\le\, &C\big(1 + \E\|u^0\|^2_{H^2}\big)\E \sum_{j = 1}^n \int_{t_{j - 1}}^{t_j}\sigma^{-\frac12}(t_j - \sigma)\d \sigma \notag\\
\le\, &C \tau \big(1 + \E\|u^0\|^2_{ H^2}\big). \notag
\end{align}
The term $T_{32}$ can be estimated by decomposing it into three parts, i.e.,
\begin{align}\label{sfem-ferrs:u-T32}
T_{32} 
\lesssim &\,   \E \sum_{j = 1}^n \int_{t_{j - 1}}^{t_j}\|[E(\sigma) - E(t_j)] P_X[B(u(t_{n-j})) - B(u(t_{n-1}))] \|_{ \L_2^0}^2\d \sigma \\
&\, + \E \sum_{j = 1}^n \int_{t_{j - 1}}^{t_j}\| [E(t_j)P_X - \bar E_{h, \tau}^j P_{X_h}] [B(u(t_{n-j})) - B(u(t_{n-1}))]\|_{ \L_2^0}^2 \d \sigma \notag\\
&\,  + \E \sum_{j = 1}^n \int_{t_{j - 1}}^{t_j}\| [E(\sigma)P_X - \bar E_{h, \tau}^j P_{X_h}]  B(u(t_{n-1})) \|_{ \L_2^0}^2 \d \sigma\notag\\
= &\!:T_{32}^a  + T_{32}^b + T_{32}^c. \notag
\end{align} 
By using  \eqref{ass-con-nosie-1}, \eqref{A:sg-sta1} with $\gamma = \frac34$, \eqref{A:sg-sta2} with $\mu = \frac12$
and H\"older continuity \eqref{SPDE-u-holder}, we have the following bound 
for $T_{32}^a$ 
\begin{align}\label{sfem-ferrs:u-T32a}
T_{32}^a
\lesssim &\, \sum_{j = 1}^{n} \int_{t_{j-1}}^{t_j}\E \| E(\sigma) [I- E(t_j - \sigma)] P_X[B(u(t_{n-j})) - B(u(t_{n-1}))]  \|_{ \L_2^0}^2 \d \sigma \notag\\
\lesssim &\, \sum_{j = 1}^{n} \int_{t_{j-1}}^{t_j}\E \sum_{\ell}\mu_\ell\| {(I+A)^{\frac34}}E(\sigma)  {(I+A)^{-\frac34}} [I- E(t_j - \sigma)] P_X[B(u(t_{n-j})) - B(u(t_{n-1}))] \phi_\ell \|_{L^2}^2\d \sigma \notag\\
\lesssim &\, {\sum_{j = 1}^{n} \int_{t_{j-1}}^{t_j} \sigma^{-\frac32} \E \sum_{\ell}\mu_\ell\|  (I+A)^{-\frac12} [I- E(t_j - \sigma)] (I+A)^{-\frac14} P_X[B(u(t_{n-j})) - B(u(t_{n-1}))] \phi_\ell \|_{L^2}^2\d \sigma }\notag\\
\lesssim &\, \sum_{j = 1}^{n} \int_{t_{j-1}}^{t_j}{\sigma^{-\frac32}} (t_j - \sigma) \E \sum_{\ell}\mu_\ell \|B(u(t_{n-j})) - B(u(t_{n-1}))\phi_\ell \|_{{H^{-\frac12}}}^2\d \sigma \notag\\
\lesssim &\, \sum_{j = 1}^{n} \int_{t_{j-1}}^{t_j}{\sigma^{-\frac32}} (t_j - \sigma)
\E \|u(t_{n-j}) - u(t_{n-1})\|_{L^2}^2\d \sigma 
\notag\\
\lesssim &\, \tau \big(1 + \E\|u^0\|^2_{ H^2}\big)\sum_{j = 1}^{n} \int_{t_{j-1}}^{t_j}{\sigma^{-\frac32}}
(t_{n-1} - t_{n-j} )\d \sigma \notag\\
\lesssim &\, {\tau \big(1 + \E\|u^0\|^2_{ H^2}\big)\sum_{j = 1}^{n} \int_{t_{j-1}}^{t_j} \sigma^{-\frac32}
\sigma \d \sigma} \notag\\
\lesssim &\, \tau \big(1 + \E\|u^0\|^2_{ H^2}\big).
\end{align}
where \eqref{ass-con-nosie-1} is used in the derivation of the fourth to last inequality, inequalities $t_j - \sigma \le \tau$ and \eqref{SPDE-u-holder} are used in deriving the third to last inequality, and estimate $t_{n-1} - t_{n-j} = t_{j - 1} \leq \sigma$ for $\sigma \in [t_{j-1}, t_j]$ is used in deriving the second to last inequality.

By noting $E(t_j)P_X - \bar E_{h, \tau}^j P_{X_h}=\Phi_h(t_j)+\bar \Phi_{h, \tau}^j$,
$T_{32}^b$ is bounded by 
\begin{align}\label{sfem-ferrs:u-T32b}
T_{32}^b
= &\, \sum_{j = 1}^{n} \int_{t_{j-1}}^{t_j} \E \|[\Phi_h(t_j)+\bar \Phi_{h, \tau}^j][B(u(t_{n-j})) - B(u(t_{n-1}))]  \|_{ \L_2^0}^2 \d \sigma\\
\lesssim &\, \sum_{j = 1}^{n} \int_{t_{j-1}}^{t_j} t_j^{-1}h^4 \,\E \|(I + A)^{\frac{1}{2}}P_X[B(u(t_{n-j})) - B(u(t_{n-1}))]\|_{ \L_2^0}^2 \d \sigma 
\notag\\
&+ \sum_{j = 1}^{n} \int_{t_{j-1}}^{t_j} t_j^{-1}\tau\,\E \|B(u(t_{n-j})) - B(u(t_{n-1}))\|_{ \L_2^0}^2  \d \sigma
\notag\\
\lesssim &\, (\tau+h^4)\sum_{j = 1}^{n} \int_{t_{j-1}}^{t_j}  t_j^{-1} \E \|u(t_{n-j}) - u(t_{n-1})\|_{H^\beta}^2 \d \sigma 
\notag\\
\lesssim &\,(\tau+h^4)\big(1 + \E\|u^0\|^2_{ H^2}\big)\sum_{j = 1}^{n} \int_{t_{j-1}}^{t_j} t_j^{-1}(t_{n-1} - t_{n-j} )^{2-\beta} \d \sigma
\notag\\
\lesssim &\,(\tau + h^4)\big(1 + \E\|u^0\|^2_{ H^2}\big)\int_{0}^{t_n}  \sigma^{1-\beta} \d \sigma\notag\\
\lesssim &\, (\tau + h^4)\big(1 + \E\|u^0\|^2_{ H^2}\big).\notag
\end{align}
{where \eqref{lem:FEM-errs-TH} and \eqref{lem:D-Fully-err-l2} are used in the derivation of the fifth to last inequality,  \eqref{ass-con-nosie-2} is used in the fourth to last inequality, \eqref{SPDE-u-holder-Hbeta} is used in the third to last inequality, and $t_{n-1} - t_{n-j} = t_{j - 1} \leq \sigma$ for $\sigma \in [t_{j-1}, t_j]$ is used in the second to last inequality.} 
Directly applying \eqref{lem:fem-ferrs-L2-int} and \eqref{L20:Sta-B} shows that 
%
\begin{align}\label{sfem-ferrs:u-T32c}
T_{32}^c 
\lesssim
(\tau + h^4)\sup_{t \in [0, T]}\E \|(I+A)^{ \frac{1}{2}} P_X B(u(t)) \|_{ \L_2^0}^2 
\lesssim \tau +  h^4 .
\end{align}
Moreover,  by \eqref{E_ht:sg-sta} with $\gamma = \frac12$ {and \eqref{ass-con-nosie-1}}, we further have  
\begin{align}\label{sfem-ferrs:u-T33}
\E \|T_{33}\|_{L^2}^2
\lesssim&\, \tau  \sum_{j = 1}^n \E\|(I+A_h)^{\frac14}\bar E_{h, \tau}^j (I+A_h)^{-\frac14}P_{X_h}[B(u(t_{n-j})) - B(u_h^{n-j})]\|_{ \L_2^0}^2 \notag \\
\lesssim&\, \tau \sum_{j = 1}^n  t_j^{-\frac12}\E\sum_\ell \mu_\ell  \, \|P_{X_h}[(B(u(t_{n-j})) - B(u_h^{n-j}) )\phi_{\ell}]\|^2_{H^{-\frac12}}
\notag\\
\lesssim &\, \tau \sum_{j = 1}^n  t_j^{-\frac12}\E  \|B(u(t_{n-j})) - B(u_h^{n-j}) \|_{{\L_2^0(\mathbb{L}^2, \mathbb{H}^{-1/2})}}^2
\notag\\
\lesssim&\, {\tau \sum_{j = 1}^n  t_j^{-\frac12}\E\|u(t_{n-j}) - u_h^{n-j}\|_{L^2}^2 }\notag\\
\lesssim&\, \tau \sum_{i = 0}^{n-1}  (t_n - t_i)^{-\frac12}\E\|u(t_i) - u_h^i\|_{L^2}^2. 
\end{align}
In view of \eqref{sfem-ferrs:u-T3}--\eqref{sfem-ferrs:u-T33}, we have
\begin{align}\label{sfem-ferrs:u-T3-sum}
\E \|T_{3}\|_{L^2}^2 \lesssim \tau + h^4  + \tau \sum_{i = 0}^{n-1}  (t_n - t_i)^{-\frac12}\E\|u(t_i) - u_h^i\|_{L^2}^2.
\end{align}
Substituting \eqref{sfem-ferrs:u-T1}, \eqref{sfem-ferrs:u-T2-sum} and \eqref{sfem-ferrs:u-T3-sum} into \eqref{sfem-ferrs:u-inq} yields 
\begin{align}
\E \|u(t_n) - u_h^n\|_{L^2}^2 \lesssim \tau + h^4 + \tau \sum_{i = 0}^{n-1}  (t_n - t_i)^{-\frac12}\E\|u(t_i) - u_h^i\|_{L^2}^2.
\end{align}
By using the {discrete version of generalized Gronwall's inequality in \cite[Lemma A.4]{kruse2014strong}}, we obtain the desired error estimate in \eqref{sfem-ferrs:u} for the velocity.

\subsection{Error estimates for the pressure.}  
From the numerical scheme in \eqref{fully-sFEM-weak} we can derive that 
\begin{align}\label{fully-sFEM-weak-sum}
\tau\sum_{n = 1}^m(p_h^n,\nabla\cdot v_h) 
= &\, (u_h^m,v_h) - (u_h^0,v_h) 
+2\tau\sum_{n = 1}^m\big(\D(u_h^n),\D(v_h)\big)
\notag\\
&\, -\tau\sum_{n = 1}^m (f(t_n) ,v_h) - \tau\sum_{n = 1}^m(B(u_h^{n-1})\Delta W_n ,v_h) \qquad v_h \in V_h. 
\end{align}
Subtracting \eqref{fully-sFEM-weak-sum} from \eqref{spde-weak} yields 
\begin{align*} 
\Big(\int_0^{t_m}p(t)\,\d t - \tau\sum_{n = 1}^m  p_h^n,\nabla\cdot v_h\Big) 
=\, &\big([u(t_m) - u_h^m] - [u^0 - u_h^0],v_h\big)\\
&+  2\Big(\sum_{n = 1}^m \int_{t_{n-1}}^{t_n} \D( u(t) -u_h^n)\,\d t, \D( v_h)\Big) \notag \\
&- \Big(\sum_{n = 1}^m \int_{t_{n-1}}^{t_n} [f(t) - f(t_n)]\,\d t,v_h\Big)
\notag\\
&- \Big(\sum_{n = 1}^m \int_{t_{n-1}}^{t_n} [B(u(t)) - B(u_h^{n-1})]\,\d W(t),v_h\Big)
\notag\\
=&\!: J_1(v_h) + J_2(v_h) + J_3(v_h) + J_4(v_h) \qquad \forall\, v_h \in V_h , 
\end{align*}
where
\begin{align*} 
|J_1(v_h)|
\lesssim &\, \big(  \|u(t_m) - u_h^m\|_{L^2} +\|u^0 - P_{X_h}u^0\|_{L^2} \big) 
\|v_h\|_{L^2} 
=: J_1^*\|v_h\|_{L^2} , \\
|J_2(v_h)|
\lesssim &\,
\bigg\|\sum_{n = 1}^m  \int_{t_{n-1}}^{t_n}  \D\big( u(t) -u_h^n\big) \,\d t \bigg\|_{L^2} \|\nabla v_h\|_{L^2} 
=: J_2^* \|\nabla v_h\|_{L^2} \\
|J_3(v_h)|
\lesssim &\, 
{\sum_{n = 1}^m  \int_{t_{n-1}}^{t_n}}
\|
f(t) - f(t_n)\|_{L^2} \,\d t \|v_h\|_{L^2} 
=: J_3^* \|v_h\|_{L^2} \\
|J_4(v_h)|
\lesssim &\, 
\Big\|\sum_{n = 1}^m  \int_{t_{n-1}}^{t_n} [B(u(t)) - B(u_h^{n-1})]\,\d W(t) \Big\|_{H^{-1}} \|v_h\|_{H^1} 
=: J_4^* \|v_h\|_{H^1} , 
\end{align*} 
which imply that (by using the inf-sup condition \eqref{inf-sup} of the finite element space) 
\begin{align}\label{Error-J1234}
\E\bigg\|\int_0^{t_m}p(t)\,\d t - \tau\sum_{n = 1}^m  p_h^n\bigg\|_{L^2}^2
\lesssim \E |J_1^*|^2+ \E |J_{2}^*|^2+ \E |J_{3}^*|^2+ \E |J_{4}^*|^2 . 
\end{align}

By using \eqref{sfem-ferrs:u} and \eqref{H2-app-PXh1}, we have 
\begin{align}\label{J1-star}
\E |J_1^*|^2
\lesssim &\, ( \tau + h^4  )  .
\end{align}
and
\begin{align}\label{J2-1234-star}
J_2^* 
\le &\, \bigg\| \sum_{n = 1}^m  \int_{t_{n-1}}^{t_n}  \D\big( u(t) -u(t_n)\big) \,\d t  \bigg\|_{L^2}
+ \bigg\| \sum_{n = 1}^m  \int_{t_{n-1}}^{t_n}  \D\big( u(t_n) - u_h^n\big) \,\d t  \bigg\|_{L^2}    \\
\le &\, \bigg\| \sum_{n = 1}^m  \int_{t_{n-1}}^{t_n}  \D\big( u(t) -u(t_n)\big) \,\d t  \bigg\|_{L^2} \quad\mbox{(using \eqref{SPDE-mild} with $t = t_n$ and \eqref{fully-sFEM-sum})} \notag\\
&\,+ \bigg\| \sum_{n = 1}^m \int_{t_{n-1}}^{t_n}  \D\big( [E(t_n)P_X - \bar E_{h, \tau}^nP_{X_h}]u^0\big) \,\d t \bigg\|_{L^2}  \notag \\
&\,+ \bigg\| \sum_{n = 1}^m  \int_{t_{n-1}}^{t_n} \sum_{i = 0}^{n-1}  \int_{t_{i}}^{t_{i+1}}  \D\big( E(t_n - s)P_{X}f(s) - \bar E_{h, \tau}^{n-i} P_{X_h}f(t_{i+1})\big)\,\d s \,\d t \bigg\|_{L^2}  \notag\\
&\,+ \bigg\| \sum_{n = 1}^m  \int_{t_{n-1}}^{t_n} \sum_{i = 0}^{n-1}  \D\Big(\int_{t_{i}}^{t_{i+1}}E(t_n - s)P_{X} B(u(s)) - \bar E_{h, \tau}^{n-i} P_{X_h} B(u_h^i)\,\d W(s)\Big) \,\d t \bigg\|_{L^2} \notag\\
=&\!: J_{21}^*+ J_{22}^* +  J_{23}^* + J_{24}^*, \notag
\end{align}
where 
\begin{align}\label{sFEM-err-p:I21}
\E |J_{21}^*|^2 
\lesssim &\, \sum_{n = 1}^m  \int_{t_{n-1}}^{t_n}
\E \|u(t) -u(t_n)\|_{H^1}^2 \,\d t  \notag\\
\lesssim &\, \sum_{n = 1}^m  \int_{t_{n-1}}^{t_n}  \big(1+ \E\|u^0\|_{H^2}^2\big)(t - t_n) \, \d t 
\quad\mbox{(here \eqref{SPDE-u-holder} is used)} \notag\\
\lesssim &\,\tau , \notag \\
\E |J_{22}^*|^2 
\lesssim &\, \E \Big\|\sum_{n = 1}^m \int_{t_{n-1}}^{t_n} \nabla [E(t)P_X - \bar E_{h, \tau}^nP_{X_h}]u^0 \,\d t \Big\|_{L^2}^2 
+ \E \Big\|\sum_{n = 1}^m \int_{t_{n-1}}^{t_n} \nabla [E(t_n)- E(t) ]P_X u^0 \,\d t \Big\|_{L^2}^2 \notag\\
\lesssim &\, \tau + h^2  \qquad\mbox{(here \eqref{lem:fem-ferrs-H1} is used)} \notag\\
&\, + {\E \sum_{n = 1}^m \int_{t_{n-1}}^{t_n} \| (I+A)^{-\frac12}[E(t_n - t) - I] E(t) (I+A) u^0  \|_{L^2}^2\,\d t} \notag\\
\lesssim &\, \tau + h^2 
+ \E \int_{0}^{t_m} \tau \|E(t)  (I+A) u^0 \|_{L^2}^2\,\d t \quad\mbox{(here \eqref{A:sg-sta2} is used)}\notag\\
\lesssim &\, \tau + h^2 + \E \int_{0}^{t_m} \tau \|  u^0 \|_{{H^2}}^2\,\d t  \notag\\
\lesssim &\, \tau + h^2 , \notag \\ 
\E |J_{23}^*|^2 
\lesssim &\, \E \Big\|\sum_{n = 1}^m  \int_{t_{n-1}}^{t_n} \sum_{i = 0}^{n-1}  \int_{t_{i}}^{t_{i+1}} \nabla E(t_n - s)P_{X}[f(s) -f(t_{i+1})]\,\d s \,\d t\Big\|_{L^2}^2\\
&+ \E \Big\|\sum_{n = 1}^m  \int_{t_{n-1}}^{t_n} \sum_{i = 0}^{n-1}  \int_{t_{i}}^{t_{i+1}} \nabla[E(t_n - s)P_{X} - \bar E_{h, \tau}^{n-i} P_{X_h}]f(t_{i+1})\,\d s \,\d t\Big\|_{L^2}^2\notag\\
=&\!: \E |J_{231}^*|^2 + \E |J_{232}^*|^2 .
\notag
\end{align}
The two terms $\E |J_{231}^*|^2 $ and $ \E |J_{232}^*|^2$ can be estimated as follows:  
\begin{align}
\E |J_{231}^*|^2 
\lesssim &\,  \Big(\sum_{n = 1}^m  \int_{t_{n-1}}^{t_n}  \sum_{i = 0}^{n-1}  \int_{t_{i}}^{t_{i+1}}   \|(I+A)^{\frac12}E(t_n - s) P_{X}[f(s) -f(t_{i+1})]\|_{L^2}\,\d s \,\d t\Big)^2 \\
\lesssim &\,  \Big( \sum_{n = 1}^m  \int_{t_{n-1}}^{t_n}  \sum_{i = 0}^{n-1}  \int_{t_{i}}^{t_{i+1}} (t_n - s)^{-\frac12}\E \|f(t_{i+1})-f(s) \|_{L^2}\,\d s \,\d t\Big)^2  \notag\\
\lesssim &\,  \Big(\int_{0}^{t_m} \int_{0}^{t_n}  (t_n - s)^{-\frac12} \tau^{\frac12} \d s  \,\d t\Big)^2 \notag\\
\lesssim &\, \tau 
\notag
\end{align}
and, by using a change of variables $\sigma= t_n - s $ and $j=n - i $, 
\begin{align}
\E |J_{232}^*|^2 
= \,&  \E \Big\|\tau\sum_{i = 0}^{m-1}   \sum_{n = i+1}^m  \int_{t_i}^{t_{i+1}} \nabla[E(t_n - s)P_{X} - \bar E_{h, \tau}^{n-i} P_{X_h}]f(t_{i+1})\,\d s \Big\|_{L^2}^2\\
= \,&  \E \Big\|\tau\sum_{i = 0}^{m-1}   \sum_{j = 1}^{m-i}  \int_{t_{j-1}}^{t_j} \nabla[E(\sigma)P_{X} - \bar E_{h, \tau}^{j} P_{X_h}]f(t_{i+1})\,\d \sigma \Big\|_{L^2}^2\notag\\
\lesssim &\, \tau\sum_{i = 0}^{m-1}  \E \Big\|\sum_{j = 1}^{m-i}  \int_{t_{j-1}}^{t_j} \nabla[E(\sigma)P_{X} - \bar E_{h, \tau}^{j} P_{X_h}]f(t_{i+1})\,\d \sigma \Big\|_{L^2}^2\notag\\
\lesssim &\, \tau\sum_{i = 0}^{m-1}(\tau + h^2)\E\|f(t_{i+1})\|_{L^2}^2 \quad\mbox{(here \eqref{lem:fem-ferrs-H1} is used)} \notag\\
\lesssim &\, \tau + h^2 .\notag
\end{align}

It remains to estimate the term $J_{24}^*$ in \eqref{J2-1234-star}. 
Since the integrand in the expression of $J_{24}^*$ is independent of $t$ for $t\in(t_{n-1},t_n]$, it follows that the integration with respect to $t$ is actually a summation times $\tau$.  
By interchanging the order of integration in the expression of $J_{24}^*$ and using It\^o's isometry in \eqref{Ito_isometry}, we obtain 
\begin{align}
\E |J_{24}^*|^2 
=\,&\E \bigg\| \D\Big( \sum_{n = 1}^m  \int_{t_{n-1}}^{t_n} \sum_{i = 0}^{n-1} \int_{t_{i}}^{t_{i+1}}E(t_n - s)P_{X} B(u(s)) - \bar E_{h, \tau}^{n-i} P_{X_h} B(u_h^i)\,\d W(s) \,\d t \Big)\bigg\|_{L^2}^2 \notag\\
=\, &\E \bigg\|\D\Big( \sum_{i = 0}^{m-1} \int_{t_i}^{t_{i+1}}\sum_{n = i+1}^m \int_{t_{n-1}}^{t_n} [E(t_n - s)P_{X} B(u(s)) - \bar E_{h, \tau}^{n-i} P_{X_h} B(u_h^i)]\, \d t \,\d W(s) \Big) \bigg\|_{L^2}^2\notag \\
\lesssim\,&\E\sum_{i = 0}^{m-1} \int_{t_i}^{t_{i+1}}\Big\| \sum_{n = i+1}^m \int_{t_{n-1}}^{t_n} \nabla[E(t_n - s)P_{X} B(u(s)) - \bar E_{h, \tau}^{n-i} P_{X_h} B(u_h^i)] \, \d t\Big\|_{ \L_2^0}^2\,\d s. \notag 
\end{align}
%
By using a change of variables $\sigma = t - t_i$ and $j = n - i$, we split $\E |J_{24}^*|^2$ into three parts as follows, by using the triangle inequality, 
\begin{align}
\E |J_{24}^*|^2 
=\,&\E\sum_{i = 0}^{m-1} \int_{t_i}^{t_{i+1}}\Big\| \sum_{j = 1}^{m-i} \int_{t_{j-1}}^{t_j}  \nabla[E(t_{i+j} - s)P_{X} B(u(s)) - \bar E_{h, \tau}^{j} P_{X_h} B(u_h^i)]\, \d \sigma\Big\|_{ \L_2^0}^2\,\d s \notag\\
\lesssim\, &\E\sum_{i = 0}^{m-1} \int_{t_i}^{t_{i+1}}\Big\| \sum_{j = 1}^{m-i} \int_{t_{j-1}}^{t_j}  \nabla[E(t_{i+j} - s) - E(\sigma)]P_{X} B(u(s)) \, \d \sigma\Big\|_{ \L_2^0}^2\,\d s \notag\\
&+\E\sum_{i = 0}^{m-1} \int_{t_i}^{t_{i+1}}\Big\| \sum_{j = 1}^{m-i} \int_{t_{j-1}}^{t_j}  \nabla E(\sigma)P_{X} [B(u(s)) - B(u_h^i)]\, \d \sigma\Big\|_{ \L_2^0}^2\,\d s \notag\\
&+\tau \E\sum_{i = 0}^{m-1} \Big\| \sum_{j = 1}^{m-i} \int_{t_{j-1}}^{t_j}  \nabla[E(\sigma)P_{X}  - \bar E_{h, \tau}^{j} P_{X_h}] B(u_h^i)\, \d \sigma\Big\|_{ \L_2^0}^2 \notag\\
=&\!:\E |J_{241}^*| + \E |J_{242}^*| + \E |J_{243}^*|.\notag
\end{align}
In order to estimate the term $\E |J_{241}^*|$,  we first note that for $\xi_j,\sigma\in (t_{j-1},t_j)$ we have $|\xi_j - \sigma| < \tau$ and therefore, by using the triangle inequality, 
\begin{align}
&\sum_{ j = 1}^{m} \int_{t_{j-1}}^{t_j} \| [E(\xi_j)-E(\sigma)] \widetilde{v(s)} \|_{ \L_2^0}^2\, \d \sigma  \notag\\
=\, & \int_{0}^{2\tau} \| [E(\xi_j)-E(\sigma)] \widetilde{v(s)} \|_{ \L_2^0}^2\, \d \sigma 
+ \sum_{ j = 3}^{m} \int_{t_{j-1}}^{t_j}\big\|\big[[E(\xi_j)-E(\sigma -\tau)] - [E(\sigma)-E(\sigma -\tau)]\big] \widetilde{v(s)}\big \|_{ \L_2^0}^2\, \d \sigma \notag\\
\lesssim\, &  \tau   \| \widetilde{v(s)}\|_{ \L_2^0}^2 + \sum_{ j = 3}^{m} \int_{t_{j-1}}^{t_j}\|(I+A)^{-1}[E(\xi_j + \tau - \sigma) - I]  (I+A)E(\sigma-\tau) \widetilde{v(s)}\|_{ \L_2^0}^2 \, \d \sigma \notag\\
& + \sum_{ j = 3}^{m} \int_{t_{j-1}}^{t_j}\|(I+A)^{-1}[E(\tau) - I]  (I+A)E(\sigma-\tau) \widetilde{v(s)}\|_{ \L_2^0}^2 \, \d \sigma 
\notag\\
\lesssim\, &\tau   \| \widetilde{v(s)}\|_{ \L_2^0}^2 
+ \tau^2  \int_{2\tau}^{t_m}  \| (I+A)E(\sigma-\tau)\widetilde{v(s)}\|_{ \L_2^0}^2\, \d \sigma   \quad\mbox{(here \eqref{A:sg-sta2} with $\mu = 1$ is used)} \notag\\
\lesssim\, &\tau   \| \widetilde{v(s)}\|_{ \L_2^0}^2 
+ \tau^2 \| \widetilde{v(s)}\|_{ \L_2^0}^2 \int_{2\tau}^{t_m}  (\sigma - \tau )^{-2}\, \d \sigma  \quad\mbox{(here \eqref{A:sg-sta1} with $\gamma =1$ is used)} \notag\\
\lesssim\, & \tau   \| \widetilde{v(s)}\|_{ \L_2^0}^2. \notag
\end{align}
Then, the term $\E |J_{241}^*|$ can be estimated by applying the above estimate with $\xi = t_{i+j} - s$ and $\widetilde{v(s)} = (I+A)^{\frac{1}{2}} P_{X} B(u(s))$, i.e., 
\begin{align}
\E |J_{241}^{*}|
\lesssim \tau \sum_{i = 0}^{m-1} \int_{t_i}^{t_{i+1}}   \E\|(I+A)^{\frac{1}{2} }P_{X} B(u(s))\|_{ \L_2^0}^2 \,\d s \lesssim  \tau.  \qquad\mbox{(here \eqref{L20:Sta-B} is used)} \notag
\end{align}
The term $\E |J_{242}^{*}|$ can be estimated by using a variable transform $t_{m-i} - \sigma = t$, i.e.,
\begin{align}
\E |J_{242}^{*}|
\lesssim\, &\E\sum_{i = 0}^{m-1} \int_{t_i}^{t_{i+1}}\Big\| (I+A)^{\frac34} \int_{0}^{t_{m - i}}  E(t_{m - i} - t) (I+A)^{-\frac 14}P_{X} [B(u(s)) - B(u_h^i)]\, \d t\Big\|_{ \L_2^0}^2\,\d s\notag\\
\lesssim\, & \sum_{i = 0}^{m-1} \int_{t_i}^{t_{i+1}}  \bigg(\int_{0}^{t_{m - i}}  (t_{m - i} - t)^{-\frac34} \d t\bigg)^2 \E\| (I+A)^{-\frac 14}P_{X} [B(u(s)) - B(u_h^i)]\|_{ \L_2^0}^2\,\d s \notag\\
&\hspace{205pt} 
\quad\mbox{(here \eqref{A:sg-sta4} with $\rho = \frac34$ is used)}  \notag\\ 
\lesssim\, & \sum_{i = 0}^{m-1} \int_{t_i}^{t_{i+1}}\E\| (I+A)^{-\frac 14}P_{X} [B(u(s)) - B(u_h^i)]\|_{ \L_2^0}^2\,\d s 
\notag\\
\lesssim\, & \sum_{i = 0}^{m-1} \int_{t_i}^{t_{i+1}}\E\sum_{\ell} \mu_{\ell}\|[B(u(s)) - B(u_h^i) ]\phi_{\ell}\|_{{H^{-\frac12}}}^2\,\d s 
\quad\mbox{(here \eqref{P_X-bound} with $s = \frac12$ is used)}  \notag\\ 
\lesssim\, & \sum_{i = 0}^{m-1} \int_{t_i}^{t_{i+1}}\E\| u(s) - u_h^i\|_{L^2}^2\,\d s 
\quad\mbox{(here \eqref{ass-con-nosie-1} used)} \notag \\
\lesssim\, & \sum_{i = 0}^{m-1} \int_{t_i}^{t_{i+1}}\big( \E \| u(s) - u(t_i)\|_{L^2}^2+\E\| u(t_i) - u_h^i\|_{L^2}^2\,\big)\, \d s \notag\\
\lesssim\, & \tau + \max_{1\le n\le N}\E\|u(t_{n}) - u_h^n\|_{L^2}^2
\quad\mbox{(here \eqref{SPDE-u-holder} is used)}  \notag\\
\lesssim\,& \tau + h^2. \qquad\mbox{(here \eqref{sfem-ferrs:u} is used, which is already proved)} 
\end{align}
The term $\E |J_{243}^{*}|$ can be estimated by directly using \eqref{lem:fem-ferrs-H1}, we obtain
\begin{align}\label{sFEM-err-p:I24c}
\E |J_{243}^{*}|
\lesssim\, &  \tau \sum_{i = 0}^{m-1}(\tau + h^2)\E\|B(u_h^i)\|_{ \L_2^0}^2 \\
\lesssim\, & (\tau + h^2)  \sum_{i = 0}^{m-1}\tau \big(1+\E\|u_h^i\|_{H^{\frac12}}^2 \big) 
\qquad\mbox{(here \eqref{L20-sta-L2} is used)}\notag\\
\lesssim\, & \tau + h^2.  \qquad\mbox{(here \eqref{sFEM-uhn-sta} is used)}\notag
\end{align}
The estimates for $J_{21}^*$, $J_{22}^*$, $J_{23}^*$ , $J_{24}^*$ above imply that 
\begin{align}\label{sFEM-err-p:I2}
\E | J_2^* |^2\lesssim \tau + h^2 .    
\end{align}

The term $J_3$ can be estimated directly by using the H\"older continuity of $f$ in time, i.e., 
\begin{align} 
\E | J_3^* |^2 
\lesssim&\, \sum_{n = 1}^m  \int_{t_{n-1}}^{t_n} \E\|f(t) - f(t_n)\|_{L^2}^2\,\d t 
\lesssim \sum_{n = 1}^m  \int_{t_{n-1}}^{t_n} \tau\,\d t 
\lesssim \tau . 
\end{align} 

The term $J_4$ can be estimated by
\begin{align}\label{sFEM-err-p:I4}
\E | J_4^* |^2 
\lesssim &\,  
\E \sum_{n = 1}^m  \int_{t_{n-1}}^{t_n} \big\|[B(u(t)) - B(u_h^{n-1})] \big\|_{{\L_2^0(\mathbb{L}^2, \, \mathbb{H}^{-1})}}^2
\d t \notag\\
\lesssim &\,    \sum_{n = 1}^m  \int_{t_{n-1}}^{t_n}\E\|u(t) - u_h^{n-1}\|_{L^2}^2 
\quad\mbox{( here \eqref{ass-con-nosie-1} is used)} \notag\\
\lesssim &\,    \sum_{n = 1}^m  \int_{t_{n-1}}^{t_n}
\big( \E\|u(t) - u(t_{n-1})\|_{L^2}^2 + \E\|u(t_{n-1}) - u_h^{n-1}\|_{L^2}^2 \big) \notag\\
\lesssim &\,   \tau + h^2 \qquad\mbox{(here \eqref{SPDE-u-holder} and \eqref{sfem-ferrs:u} are used)} .
\end{align} 
By substituting \eqref{J1-star} and \eqref{sFEM-err-p:I2}--\eqref{sFEM-err-p:I4} into \eqref{Error-J1234}, we obtain 
\begin{align*}
\E \Big\|\int_0^{t_m}p(t)\,\d t - \tau\sum_{n = 1}^m  p_h^n\Big\|_{L^2}^2  \lesssim \tau   + h^2 . 
\end{align*}
This proves desired error estimate in \eqref{sfem-ferrs:p} for the pressure.
\hfill\endproof

\section{Numerical experiments}\label{sec:num}
In this section, we present numerical tests to support the theoretical analysis in Theorem \ref{THM:sfem-ferrs} by illustrating the convergence of the fully discrete finite element solutions.  For a stable discretization in space, we use the prototypical MINI element; cf. \cite{arnold1984stable, girault2003quasi} for details.  All the computations are performed using the software package NGSolve (\url{https://ngsolve.org/}). 

We solve the stochastic Stokes equations \eqref{spde} in the two-dimensional square $D = [0, 1] \times [0, 1]$ under the stress boundary condition by the proposed scheme \eqref{fully-sFEM-weak} up to time $T$, with initial value $u_0 = (0, 0)$ and source term $f = (1, 1)^\top$. The noise term $B(u)\d W$ is determined by 
{
\begin{align}
\label{test-B}
B(u) &= \frac12 \left(\begin{array}{cc}\sqrt{u^2_1 +1} &  \sqrt{u^2_1 +1} \\[4pt]  \sqrt{u^2_2 + 1} & \sqrt{u^2_2 + 1} \end{array}\right),\\[5pt] 
\label{test-W}
W(t, \mathbf{x})
&=  \sum_{\ell_1 = 1}^{\infty}  \sum_{\ell_2 = 1}^{\infty}  \sqrt{\mu_{\ell_1 \ell_2}}
\left(\begin{array}{cc} 
\phi_{\ell_1 \ell_2}(\mathbf{x})  \\[4pt] 
\phi_{\ell_1 \ell_2}(\mathbf{x}) \end{array}\right) 
w_{\ell_1 \ell_2}(t) \qquad \forall \,\, t \in [0, T], 
\end{align}
for $\mathbf{x} = (x_1, x_2) \in D$, where 
$$\{w_{\ell_1 \ell_2}(t):\,  \ell_1,\ell_2 = 0, 1,2,\dots\}$$ is a set of independent $\R$-valued Wiener processes, 
\begin{align}
\label{test-phi-cos}
\{ \phi_{\ell_1 \ell_2}(\mathbf{x})  = \cos(\ell_1 \pi x_1) \cos(\ell_2 \pi x_2): \,  \ell_1,\ell_2 = 0, 1,2,\dots\} 
\end{align}
is an orthonormal basis of $L^2(D)$, and 
\begin{align}
\label{test-mu-cos}
\mu_{\ell_1 \ell_2} & = 
\left \{
\begin{aligned}
&0 \quad &&\mbox{for} \,\, \,(\ell_1, \ell_2) = (0, 0),\\
& (\ell_1^2 + \ell_2^2)^{-(r + \varepsilon)}\quad &&\mbox{for} \,\, \, (\ell_1,\ell_2)\in \mathbb{Z}^2 /\{(0, 0)\},  
\end{aligned}
\right .
\end{align}
with $\varepsilon = 0.1$ and $r \in (0, 2]$ determining the regularity of the noise. 
%

The noise term $B(u)\d W$ determined by \eqref{test-B}--\eqref{test-W} can be written as 
\begin{align}\label{B-dW}
B(u)\d W(t)
&= \sum_{\ell_1 = 1}^{\infty}  \sum_{\ell_2 = 1}^{\infty}  \sqrt{\mu_{\ell_1 \ell_2}} \left(\begin{array}{cc}\sqrt{u^2_1 +1} \\ \sqrt{u^2_2 +1}  \end{array}\right) \phi_{\ell_1 \ell_2}(\mathbf{x}) \, w_{\ell_1 \ell_2}(t),
\end{align}
which is non-solenoidal and was used to measure the effectiveness of numerical methods for the stochastic Stokes/NS equation (with $\phi_{\ell_1 \ell_2}(\mathbf{x}) = 2 \sin(\ell_1 \pi x_1) \sin(\ell_2 \pi x_2)$ therein); see \cite{fengProhl2021optimally,fengVo2022analysis}. 
Based on  the  coefficients $\mu_{\ell_1 \ell_2}$ given in \eqref{test-mu-cos}, 
the regularity of the series $W(t)$ presented in  \eqref{test-W} can be characterized as follows:
\begin{align}\label{test-eigen-cos}
\big \|(-\Delta)^{\frac{r-1}{2}}\big\|_{\L_2^0}= 
\big \|(-\Delta)^{\frac{r-1}{2}} Q^{\frac12}\big\|_{\L_2(\mathbb{L}^2, \mathbb{L}^2)} =
\sum_{\ell_1 = 0}^\infty\sum_{\ell_2 = 0}^\infty  \lambda_{\ell_1 \ell_2}^{r-1} \mu_{\ell_1 \ell_2} \lesssim 1 \quad \, \mbox{for} \quad r \in (0, 2],
\end{align}
where $\lambda_{\ell_1, \ell_2} = \pi^2(\ell_1^2 + \ell_2^2)$ represents eigenvalues of  $-\Delta$. In the case $r=2$, the series given in \eqref{test-W} constitutes a $Q$-Wiener process which satisfies Assumption \ref{ass-W}, and the noise in \eqref{B-dW} fulfills the conditions 
\eqref{ass-con-nosie-1}--\eqref{ass-con-nosie-2} in Assumption \ref{ass-B}. In particular, the Wiener processes in \eqref{test-W} is in $L^2(\Omega, H^1(D))$ but not in $L^2(\Omega, H^{1+\varepsilon}(D))$. 

We consider three cases in the numerical experiments:
\begin{itemize}
\item[Case] I : $r = 2$. In this case, the Assumption \ref{ass-W} is satisfied.
\item[Case] II : $r = 1$. The noise is trace-class, but Assumption \ref{ass-W} is not satisfied. 
\item[Case] III : $r = 0.5$. The noise is not trace-class.
\end{itemize}
\begin{figure}[htp]
\centerline{
\includegraphics[width=2.8in]{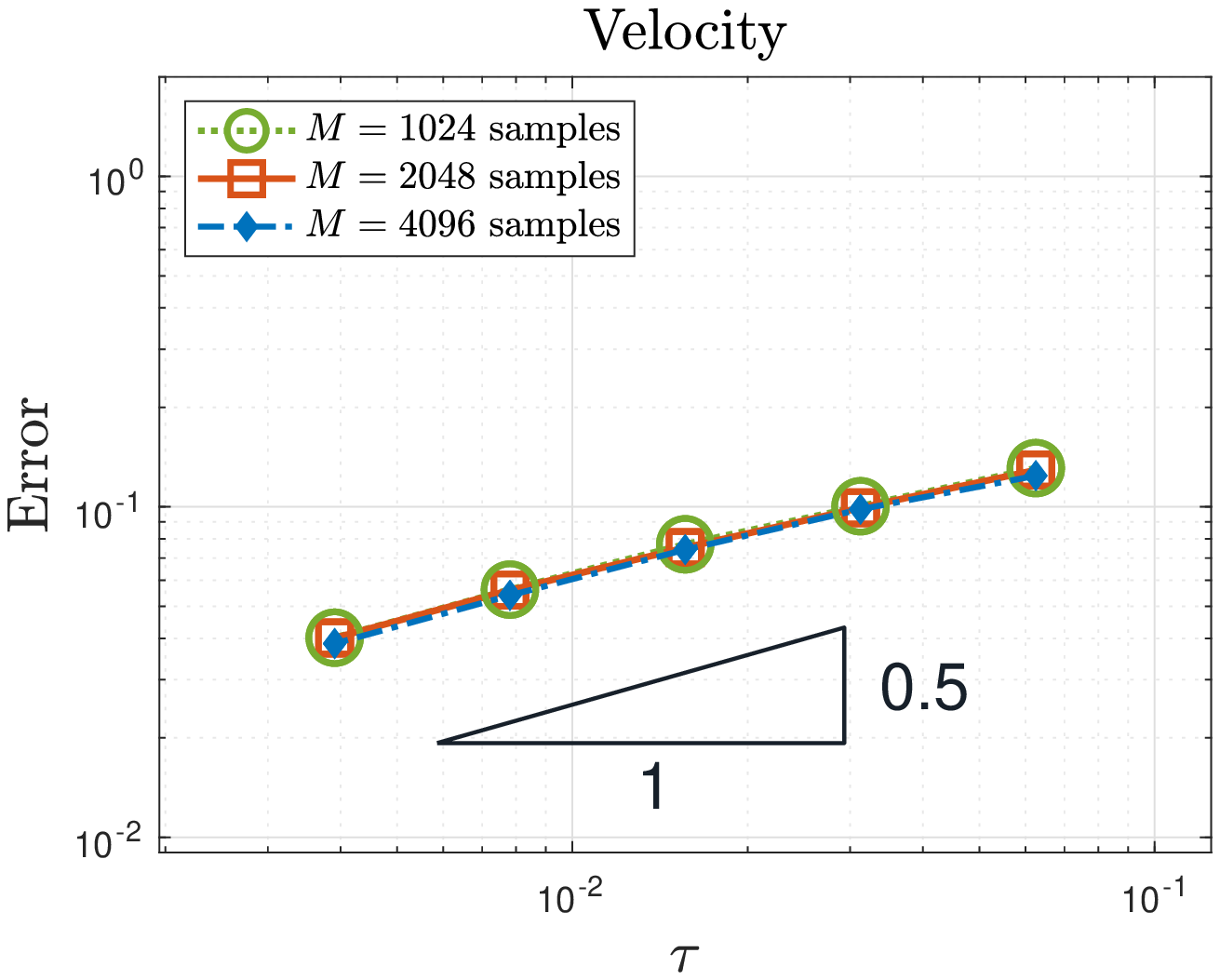}
\includegraphics[width=2.8in]{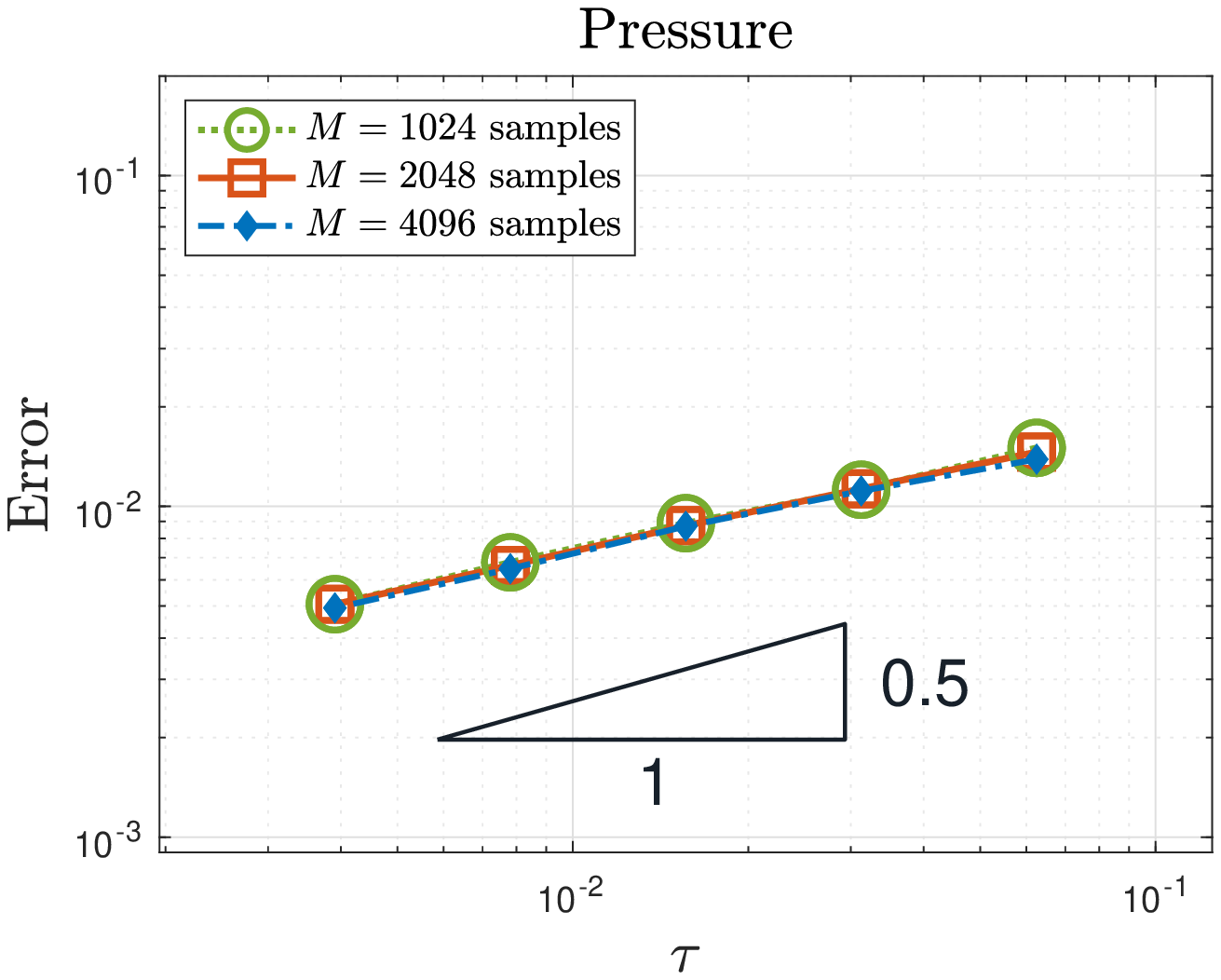}
}
\vspace{-10pt}
\caption{{Time discretization errors at $T = 1$ for Case I with $h = 2^{-6}$.}}
\label{fig_errorRate_Time_cos}
\hspace{8pt}
\centerline{
\includegraphics[width=2.8in]{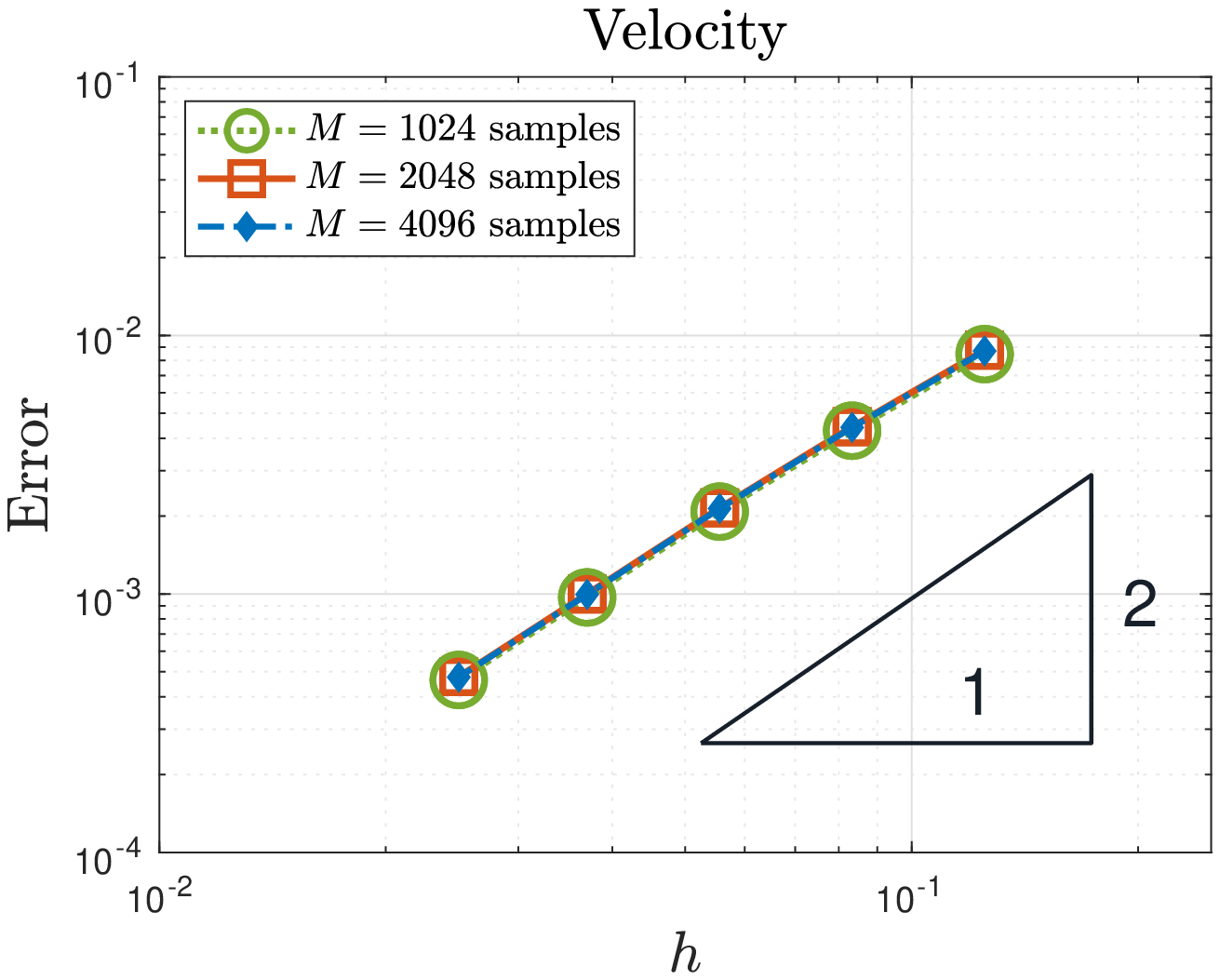}
\includegraphics[width=2.8in]{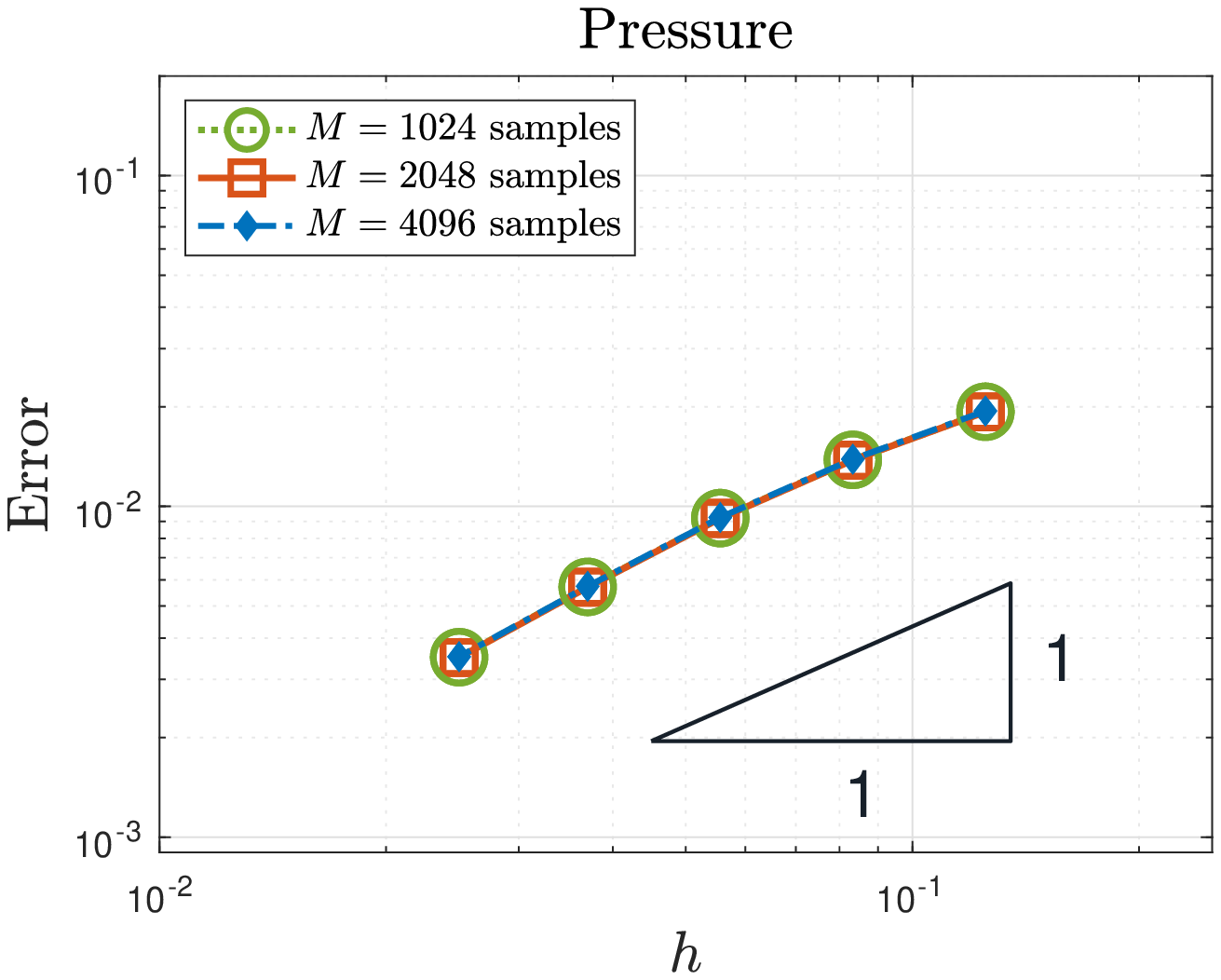}
}
\vspace{-20pt}
\caption{{Spatial discretization errors at $T = 1$ for Case I with $\tau = 2^{-8}$.}}
\label{fig_errorRate_Space_cos}
\end{figure}  

The errors of the numerical solutions in Case I are presented in Figures \ref{fig_errorRate_Time_cos} and \ref{fig_errorRate_Space_cos}. 
The expectations of the errors are computed as averages over $M$ samples, where $M=1024, 2048, 4096$, respectively.
The numerical results in Figures \ref{fig_errorRate_Time_cos} and \ref{fig_errorRate_Space_cos} indicate that the numerical solutions have half-order convergence in time for both velocity and pressure, second-order convergence in space for the velocity, and first-order convergence in space for the pressure. Therefore, the convergence orders observed in the numerical experiments are consistent with the theoretical results proved in Theorem \ref{THM:sfem-ferrs}.

\begin{figure}[htp]
\centerline{
\includegraphics[width=2.95in]{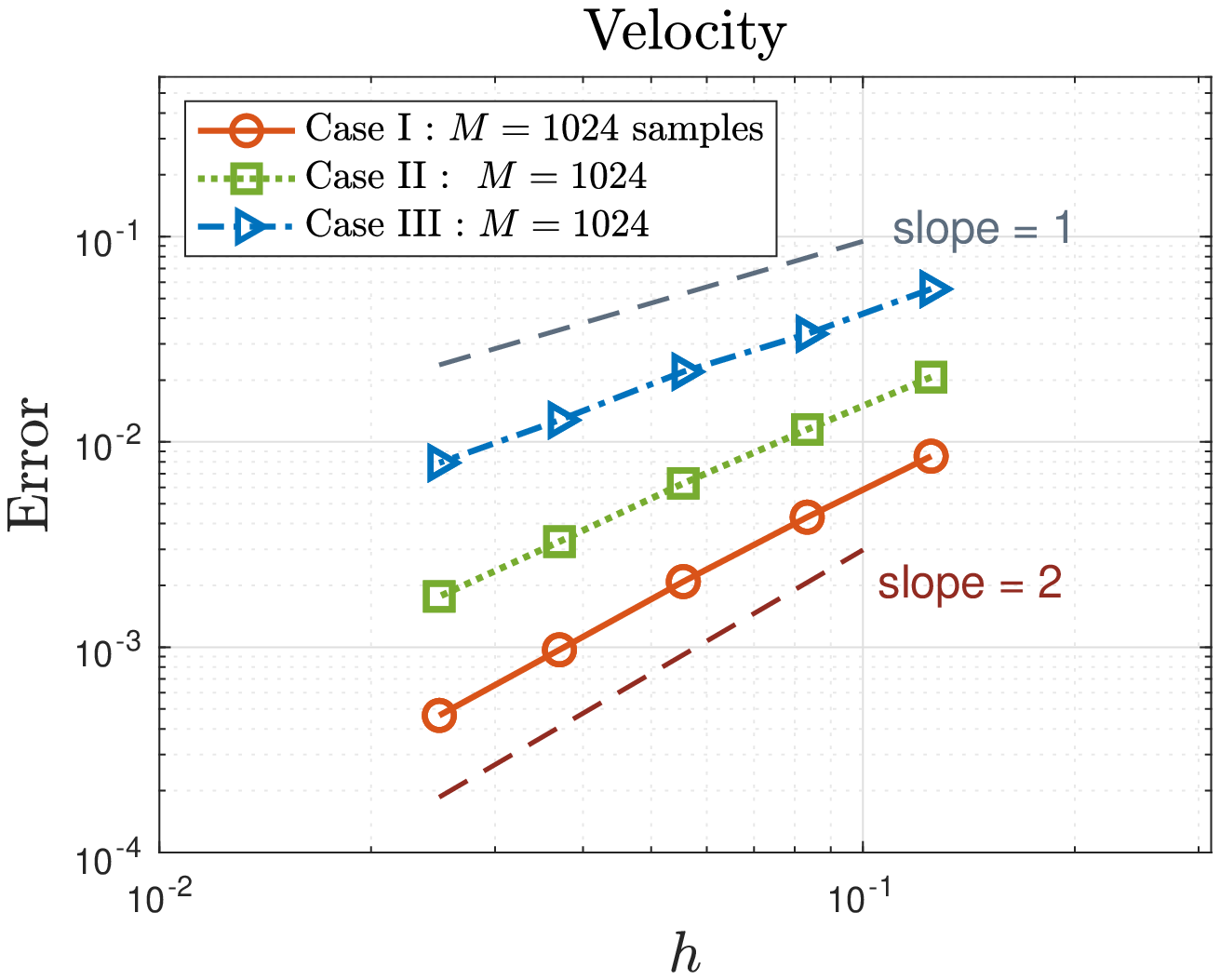}
\includegraphics[width=2.95in]{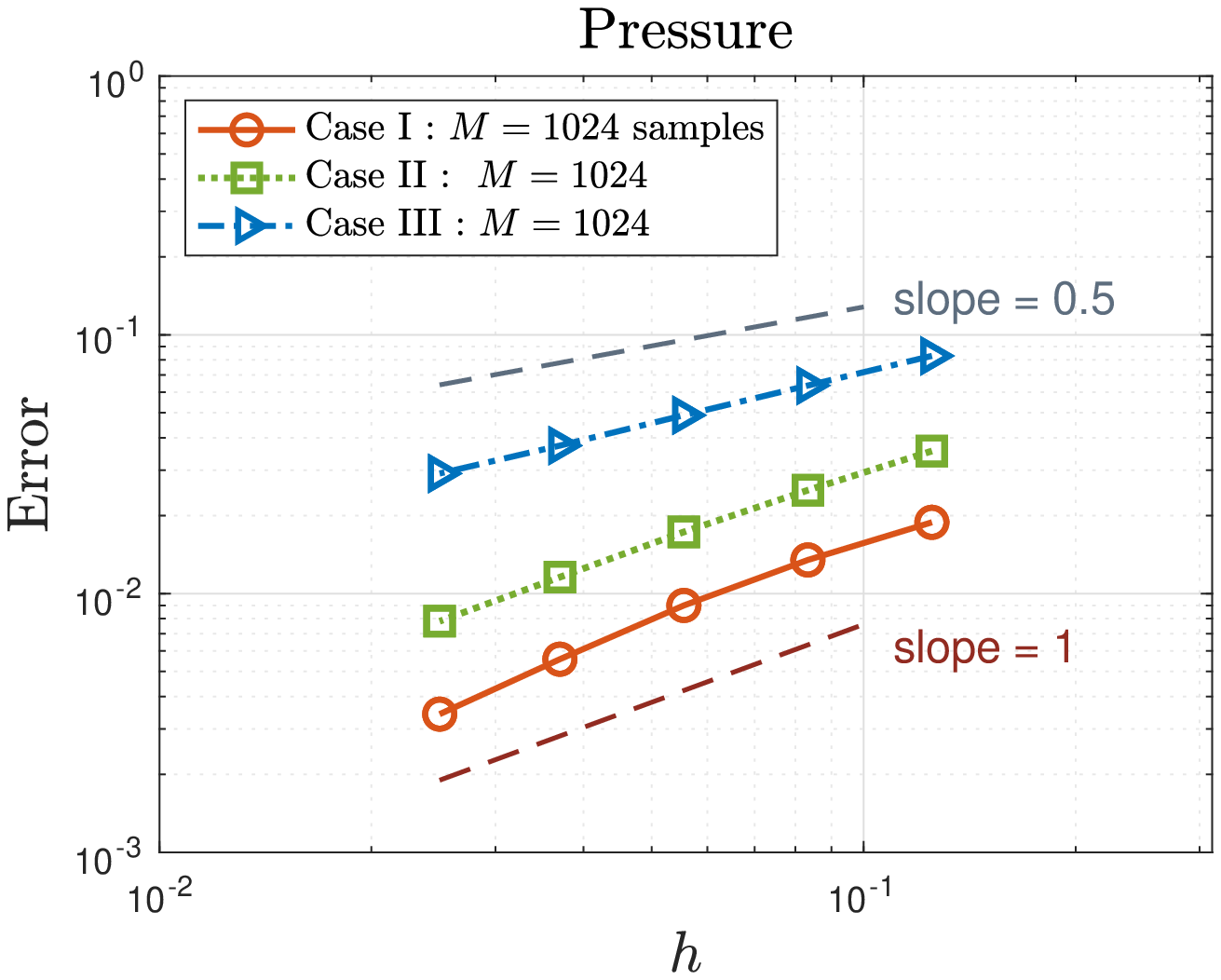}
}
\vspace{-10pt}
\caption{{  Spatial discretization errors at $T = 1$ for Cases I, II and III.}}
\label{fig_errorRate_Space_cos_r}
\end{figure}
The spatial discretization errors for noises in Cases I, II, and III are presented in Figure \ref{fig_errorRate_Space_cos_r}, where the expectations are approximated by computing averages over $M = 1024$ samples.   Notably,  Figures \ref{fig_errorRate_Time_cos} and \ref{fig_errorRate_Space_cos}  demonstrate that the number of samples,  $M = 1024$,  is already sufficiently large to capture the influence of the noise on the convergence rate. The numerical results in Figure \ref{fig_errorRate_Space_cos_r} indicate that order reduction may occur if  Assumption \ref{ass-W} is not satisfied.
}

\section{Conclusion}

We have proved higher-order strong convergence of fully discrete mixed FEMs for the stochastic Stokes equations under the stress boundary condition driven by a stochastic noise satisfying condition \eqref{ass-BdW-weak}. The error estimates of $O(\tau^\frac12 + h^2)$ and $O(\tau^\frac12 + h)$ are proved for the velocity and pressure approximations, respectively, for a semi-implicit mixed finite element method. The analysis is based on new estimates of the semi-discrete and fully discrete semigroups associated to the Stokes operator, and the $H^1$-stability of the orthogonal projection onto the discrete divergence-free finite element subspace, as shown in Section \ref{sec:est-dis-semigroup}. The improved convergence orders are consistent with the numerical experiments. 

For the simplicity of illustration, we have focused on the stochastic Stokes equations in this article. However, the methodology introduced in this article may also be extended to the stochastic NS equations to obtain higher-oder convergence in space.

\renewcommand{\theequation}{A.\arabic{equation}}
\renewcommand{\thelemma}{A.\arabic{lemma}}
\setcounter{equation}{0}

\section*{Appendix A: Well-posedness and regularity of the mild solution}$\,$

In this Appendix we prove Proposition \ref{thm-u-stability}, including existence and uniqueness of a mild solution to the stochastic Stokes equations Assumptions \ref{ass-W}--\ref{ass-u0}, and the regularity of the mild solution. 
\medskip

{\it Existence and uniqueness:} 
Let 
$Y = C([0, T]; L^2(\Omega;X)) \cap L^2(\Omega, \{\F_t\}_{t \ge 0}; L^2(0, T;X))$, 
i.e., the predictable subspace of $C([0, T]; L^2(\Omega;X))$, where $X =\{ v\in L^2(D)^d: \nabla\cdot v=0\}$. For any $v \in Y$ we denote by $Mv$ the function defined by
\begin{align}
Mv(t)
=E(t)u^0 + \int_{0}^{t} E(t-s) P_X f(s)\d s
+ \int_{0}^{t}E(t-s) P_X B(v(s))\, \d W(s) \,\,\, \mbox{for} \,\, t\in [0, T],
\end{align}
Under Assumptions \eqref{ass-W}--\eqref{ass-u0}, the following results hold, which are simple modifications of \eqref{lem:B-noise-holder}:
\begin{align}\label{E-PX-B-sta1}
\|E(t)P_XB(v)\|_{ \L_2^0}^2 &\lesssim t^{-\frac12} (1 +  \E \|v\|_{L^2}^2) &&t >0,&\\
\label{E-PX-B-sta2}
\|E(t)P_X[B(u) - B(v)]\|_{ \L_2^0}^2 &\lesssim t^{-\frac12}\, \E \|u - v\|_{L^2}^2  &&t >0.&
\end{align}
Then $Mv$ is predictable and 
\begin{align}
\E \|Mv(t)\|_{L^2}^2 
&\lesssim \E \|u^0\|_{L^2}^2 +  \E\int_{0}^{t} \|f(s)\|_{L^2}^2 \d s
+ \E\int_{0}^{t} \| E(t-s) P_X B(v(s))\|_{ \L_2^0}^2 \, \d s \notag\\
&\lesssim (1+ \E \|u^0\|_{L^2}^2 ) 
+ \int_{0}^{t} (t - s)^{-\frac {1}{2}} \E \| v(s)\|_{L^2}^2  \, \d s 
\notag\\
&\lesssim 1 + \|v\|_Y,
\notag
\end{align}
which implies that $Mv \in L^{\infty}(0, T; L^2(\Omega;X))$. 
By considering $\E\|Mv(t_2)-Mv(t_1)\|_{L^2}^2$ one can also prove the continuity of $Mv$ in time. 
As a result,  
$Mv \in C([0, T]; L^2(\Omega;X))$ and therefore the map $M : Y \to Y$ is well defined.

Clearly, a mild solution of \eqref{spde_abstact} is equivalent to a fixed point of $M$.

To prove the existence and uniqueness of a fixed point of $M$, we define $Y_\lambda$ to be
$Y$ equipped with an equivalent norm
\begin{align}
\|w\|_{Y_\lambda} := \max_{t\in [0, T]} e^{-\lambda t} \big(\E \|w\|_{L^2}^2\big)^{\frac12}  \qquad \mbox{for} \,\, w \in Y_\lambda.
\end{align}
If $v_1, v_2 \in Y$  then
\begin{align}
Mv_1(t) - Mv_2(t) = \int_0^t E(t - s) P_X[B(v_1(s)) - B(v_2(s))] \,\d W(s) \qquad \mbox{for} \,\, t \in [0,T] . \notag
\end{align} 
By applying \eqref{E-PX-B-sta2} we obtain
\begin{align}
e^{-2\lambda t}\, \E\|Mv_1(t) - Mv_2(t)\|_{L^2}^2 
&\lesssim \int_0^t e^{-2\lambda t} (t - s)^{-\frac {1}{2}} \E \| v_1(s) - v_2(s)\|_{L^2}^2 \, \d s   \notag\\
&= \int_0^t e^{-2\lambda (t-s)} (t - s)^{-\frac {1}{2}}  e^{-2\lambda s}\E \|v_1(s) - v_2(s)\|_{L^2}^2  \,\d s   \notag\\
&\lesssim \int_0^t e^{-2\lambda \sigma} \sigma^{-\frac {1}{2}} \,\d \sigma\|v_1 - v_2\|_{Y_\lambda}, \notag\\
&\lesssim \lambda^{-\frac12} \|v_1 - v_2\|_{Y_\lambda}, \notag
\end{align}
which implies that
\begin{align}
\|Mv_1 - Mv_2\|_{Y_{\lambda}} 
\lesssim \lambda^{-\frac14}\|v_1 - v_2\|_{Y_\lambda}.
\end{align}
As a result, by choosing a sufficiently large $\lambda$, the map $M:Y_\lambda\rightarrow Y_\lambda$ is a contraction. By the Banach fixed point theorem, $M$ has a unique fixed point in  $Y_\lambda$ (which
consists of the same elements as $Y$). 
This proves the existence of a unique mild solution satisfying \eqref{SPDE-mild}.


\medskip
{\it Regularity:} 
We first consider the case $H^{\beta}$ {with parameter $\beta \in (\frac{d}{2}, 2)$}.
It\^o's isometry \eqref{Ito_isometry} together with \eqref{IA-eqi-norms} and \eqref{A:sg-sta1} yields
\begin{align}\label{H-beta-u}
\E\|u(t)\|_{H^\beta}^2 
\lesssim\, &\E\|E(t)(I+A)^{\frac {\beta}{2}}u^0\|_{L^2}^2 +  \E\Big(\int_{0}^{t} \| (I+A)^{\frac {\beta}{2}}E(t-s) P_X f(s)\|_{L^2}\d s \Big)^2\notag\\
&+ \E \Big\|\int_{0}^{t} (I+A)^{\frac {\beta}{2}} E(t-s) P_X B(u(s))\d W(s)\Big\|_{L^2}^2\notag\\
\lesssim\, &\E \|u^0\|_{H^\beta}^2 
+ \E\Big(\int_{0}^{t} (t - s)^{-\frac {\beta}{2}} \|f(s)\|_{L^2} \d s\Big)^2 \notag\\
&+ \E\int_{0}^{t} \| (I+A)^{\frac {\beta -  1}{2}}E(t-s)  (I+A)^{ \frac {1}{2}}P_X B(u(s))\|_{ \L_2^0}^2 \d s \notag\\
\lesssim\, & (1+ \E \|u^0\|_{H^\beta}^2 ) 
+ \int_{0}^{t} (t - s)^{-(\beta- 1)} \E \| (I+A)^{\frac 12}P_X B(u(s))\|_{ \L_2^0}^2  \d s 
\notag\\
\lesssim\, & (1+ \E \|u^0\|_{H^\beta}^2 ) + \int_{0}^{t} (t - s)^{-(\beta- 1)} \E \|u(s)\|^2_{ H^\beta}   \d s ,
\end{align}
where \eqref{lem:B-noise-sta} is used in the last inequality. By using the {generalized Gronwall's inequality in \cite[Lemma 6.3]{elliott1992error} (or \cite[Lemma A.2]{kruse2014strong}}), we arrive at
\begin{align}\label{SPDE-u-sta-Hbeta}
\sup_{t \in [0,T]}  \E\|u(t)\|_{H^\beta}^2  \lesssim 1+ \E \|u^0\|^2_{ H^\beta}  \qquad {\mbox{for all}\quad  \frac{d}{2} < \beta < 2}.
\end{align}
For the regularity in time, it follows from \eqref{lem:B-noise-sta2}, \eqref{lem:B-noise-sta} and \eqref{SPDE-u-sta-Hbeta} that
\begin{align} 
&\sup_{t \in [0,T]}  \E\|B(u(t))\|_{ \L_2^0}^2 + \sup_{t \in [0,T]}  \E\|(I+A)^{\frac {1}{2}}P_X B(u(t))\|_{ \L_2^0}^2 \lesssim 1+ \E \|u^0\|^2_{H^2}, \notag
\end{align}
which together with \eqref{A:sg-sta1}--\eqref{A:sg-sta2} shows
\begin{align}
\E \|u(t) - u(s)\|_{H^1}^2 \lesssim \, & \E\|(I+A)^{-\frac{1}{2}}(E(t-s) - I)(I+A) u^0\|_{L^2}^2 \\
&+ \E\Big(\int_{s}^{t} \| (I+A)^{\frac{1}{2}}E(t-\sigma) P_X f(\sigma)\|_{L^2}\d \sigma \Big)^2 \notag\\
&+ \E\int_{s}^{t}\|(I+A)^{\frac{1}{2}}E(t-\sigma) P_X B(u(\sigma))\|_{ \L_2^0}^2\d \sigma\notag\\
\lesssim\, & (t-s)(1 + \E\|u^0\|_{H^2}^2 ) 
+ \int_{s}^{t} \E\|(I+A)^{\frac{1}{2}} P_X B(u(\sigma))\|_{ \L_2^0}^2\d \sigma 
\notag \\
\lesssim\, & (1+ \E \|u^0\|_{H^2}^2 )(t-s). \notag
\end{align}
and
\begin{align}
\E \|u(t) - u(s)\|_{H^\beta}^2 \lesssim\, & \E\| (I+A)^{-\frac{2- \beta}{2}}(E(t-s) - I)(I+A) u^0\|_{L^2}^2 \\
&+ \E\Big(\int_{s}^{t} \| (I+A)^{\frac{\beta}{2}}E(t-\sigma) P_X f(\sigma)\|_{L^2}\d \sigma \Big)^2\notag\\
&+ \E\int_{s}^{t}\|(I+A)^{\frac{\beta-1}{2}}E(t-\sigma) (I+A)^{\frac{1}{2}}P_X B(u(\sigma))\|_{ \L_2^0}^2\d \sigma \notag\\
\lesssim\, & (t-s)^{2 -\beta }(1 + \E\|u^0\|_{H^2}^2 ) 
+ \int_{s}^{t} (t - \sigma)^{1-\beta}\E\| (I+A)^{\frac{1}{2}} P_X B(u(\sigma))\|_{ \L_2^0}^2\d \sigma 
\notag \\
\lesssim\, & (1+ \E \|u^0\|_{H^2}^2 )(t-s)^{2 - \beta}
\qquad {\mbox{for all}\quad  \frac{d}{2} < \beta < 2}. \notag
\end{align}
This proves \eqref{SPDE-u-holder} and \eqref{SPDE-u-holder-Hbeta}.

With the help of the above estimates, the $H^2$ stability follows from 
\begin{align}
\E\|u(t)\|_{H^2}^2
\lesssim\, &\E\|u^0\|_{H^2}^2 
+ \E \Big\|\int_{0}^{t}  (I+A)E(t-s) P_X f(s)\d s  \Big\|_{L^2}^2 
\\
&+ \E \Big\|\int_{0}^{t} (I+A)^{\frac12} E(t-s)  (I+A)^{\frac12} P_X B(u(s))\,\d W(s)\Big\|_{L^2}^2\notag\\
=:\, &\E\|u^0\|_{H^2}^2  + \E|S_1|^2 + \E|S_2|^2,\notag
\end{align}
where we  use the triangle inequality to get
\begin{align}
\E|S_1|^2 \lesssim \,&\E \Big( \int_{0}^{t} \| (I+A)E(t-s) P_X [f(s) - f(t)]\|_{L^2}\d s  \Big)^2 
+ \E \Big\|(I+A)\int_{0}^{t}  E(t-s) P_X f(t)\d s  \Big\|_{L^2}^2 
\notag\\
\lesssim\, & \E \Big( \int_{0}^{t} (t-s)^{-1} \|f(s) - f(t)\|_{L^2} \d s \Big)^2
+ \E \| f(t)\|_{L^2}^2
\quad \mbox{(here \eqref{A:sg-sta1} and \eqref{A:sg-sta4} are used)} 
\notag \\
\lesssim\, &1,  \qquad  \mbox{(here \eqref{ass-con-f-sta}--\eqref{ass-con-f-lip} are used)}  \notag \\[5pt]
\E|S_2|^2 \lesssim\, &\E \int_{0}^{t} \|(I+A)^{\frac12} E(t-s)  (I+A)^{\frac12} P_X  [B(u(s)) - B(u(t))]\|_{ \L_2^0}^2 \,\d s\notag\\
&+ \E \int_{0}^{t} \|(I+A)^{\frac12} E(t-s)  (I+A)^{\frac12} P_X  B(u(t))\|_{ \L_2^0}^2 \,\d s\notag\\
\lesssim\, &\int_{0}^{t} (t-s)^{-1}\E\|(I+A)^{\frac12} P_X  [B(u(t))-B(u(s))]\|_{ \L_2^0}^2 \, \d s
\quad \mbox{(here \eqref{A:sg-sta1} is  used)}  \notag\\
&+ \E \|  (I+A)^{\frac12} P_X B(u(t))\|_{ \L_2^0}^2
\quad \mbox{(here \eqref{A:sg-sta3} with $\rho = 1$ is used)} 
\notag \\
\lesssim\, & \int_{0}^{t} (t-s)^{-1}\E\|u(t)-u(s)\|_{H^\beta}^2 \d s  + 1+ \E \|u^0\|^2_{H^2}
\quad  \mbox{(here \eqref{L20-sta-H1} is used)} 
\notag\\
\lesssim\, &1+ \E \|u^0\|_{H^2}^2   + \int_{0}^{t} (t - s)^{1-\beta} \d s 
\quad  \mbox{(here \eqref{SPDE-u-holder-Hbeta} for {$\beta \in (\frac{d}{2}, 2)$ is used)}} 
\notag\\
\lesssim\, &1+ \E \|u^0\|_{H^2}^2.\notag
\end{align}
This proves \eqref{SPDE-u-sta}.
The proof of Proposition \ref{thm-u-stability} is completed.
\hfill\endproof

\begin{remark}\upshape 
We have shown the regularity of the mild solution more formally (like a priori estimates for PDEs) than rigorously. Rigorously speaking, we need to firstly ensure that $u\in L^\infty(0,T;L^2(\Omega;H^\beta(D)^d))$ before using the last inequality in \eqref{H-beta-u}. Here we briefly discuss how the proof of Proposition \ref{thm-u-stability} can be made  rigorous by considering the following regularized problem (in the semigroup formulation): 
Find $u_\varepsilon\in C([0, T]; L^2(\Omega;X)) \cap L^2(\Omega, \{\F_t\}_{t \ge 0}; L^2(0, T;X))$ such that 
\begin{align}\label{mollified-problem}
u_\varepsilon(t)
=E(t)u^0 + \int_{0}^{t} E(t-s) P_X f(s)\d s
+ \int_{0}^{t}E(t-s) P_X B(\phi_\varepsilon *\Lambda u_\varepsilon(s))\, \d W(s) \,\,\, \mbox{for} \,\, t\in [0, T],
\end{align}
where $\Lambda: L^1(\Omega)^d\rightarrow L^1(\R^d)^d$ is Stein's extension operator (see \cite[p.\ 181, Theorem 5]{Stein1970}) which is bounded from $H^{s}(\Omega)^d$ to $H^{s}(\R^d)^d$ for all $s\ge 0$, and $\phi_\varepsilon $ is a standard mollifier which has the following estimates:  
\begin{align*}
\|\phi_\varepsilon *\Lambda u_\varepsilon\|_{H^s(\R^d)} \le C\|u_\varepsilon\|_{H^s(\Omega)} 
\quad\mbox{and}\quad 
\|\phi_\varepsilon *\Lambda u_\varepsilon\|_{H^s(\R^d)} \le C\varepsilon^{-s}\|u_\varepsilon\|_{L^2(\Omega)} 
\quad\,\,\,\forall\,s\ge 0. 
\end{align*} 

For the problem in \eqref{mollified-problem}, one can prove the existence and uniqueness of a weak solution $u_\varepsilon$ by using the same fixed-point argument in Appendix A, and then prove the higher regularity of the solution by using \eqref{H-beta-u} and the generalized Gronwall's inequality in \cite[Lemma 6.3]{elliott1992error}. However, for the mollified problem in \eqref{mollified-problem}, inequality \eqref{H-beta-u} is rigorous because we already know that $\phi_\varepsilon *\Lambda u_\varepsilon \in L^\infty(0,T;L^2(\Omega;H^\beta(D)^d))$ for $u_\varepsilon\in L^\infty(0,T;L^2(\Omega;L^2(D)^d))$. The latter has been proved by using the fixed-point argument. Therefore, \eqref{H-beta-u} can be slightly changed to 
\begin{align*}
\E\|u_\varepsilon(t)\|_{H^\beta}^2 
\lesssim\, & (1+ \E \|u^0\|_{H^\beta}^2 ) + \int_{0}^{t} (t - s)^{-(\beta- 1)} \E \|\phi_\varepsilon *\Lambda  u_\varepsilon(s)\|^2_{ H^\beta}   \d s \\
\lesssim\, & (1+ \E \|u^0\|_{H^\beta}^2 ) + \varepsilon^{-2\beta} \int_{0}^{t} (t - s)^{-(\beta- 1)} \E \|u_\varepsilon(s)\|^2_{L^2}   \d s 
\lesssim  1+ \varepsilon^{-2\beta} 
\end{align*}
which implies $u_\varepsilon \in C([0,T];L^2(\Omega;H^\beta(D)^d))$ qualitatively. Then, once we already know that $u_\varepsilon \in C([0,T];L^2(\Omega;H^\beta(D)^d))$, we can apply \eqref{H-beta-u} again with the following quantitative estimate: 
\begin{align*}
\E\|u_\varepsilon(t)\|_{H^\beta}^2 
\lesssim\, & (1+ \E \|u^0\|_{H^\beta}^2 ) + \int_{0}^{t} (t - s)^{-(\beta- 1)} \E \|\phi_\varepsilon *\Lambda  u_\varepsilon(s)\|^2_{ H^\beta}   \d s \\
\lesssim\, & (1+ \E \|u^0\|_{H^\beta}^2 ) + \int_{0}^{t} (t - s)^{-(\beta- 1)} \E \|u_\varepsilon(s)\|^2_{H^\beta}   \d s .
\end{align*}
This leads to \eqref{SPDE-u-sta-Hbeta} by applying the deterministic generalized Gronwall's inequality in \cite[Lemma 6.3]{elliott1992error}, and the constant in the inequality would be independent of $\varepsilon$. In this way, all the estimates in Proposition \ref{thm-u-stability} can be proved with constants independent of $\varepsilon$, i.e.,
\begin{align}\label{u_epsilon_H^beta}
\begin{aligned}
\sup_{t \in [0,T]}  \E \|u_\varepsilon(t)\|_{H^2}^2 &\lesssim \big(1+ \E\|u^0\|_{H^2}^2\big) , \\
\E \|u_\varepsilon(t) - u_\varepsilon(s)\|_{H^1}^2 &\lesssim \big(1+ \E\|u^0\|_{H^2}^2\big)(t - s)
&&\forall\, 0\le s\le t\le T \\[5pt]
\E \|u_\varepsilon(t) - u_\varepsilon(s)\|_{H^\beta}^2 &\lesssim  \big(1+ \E\|u^0\|_{H^2}^2\big)(t - s)^{2 - \beta} 
&&\forall\, 0\le s\le t\le T,\,\,\, \forall\, \mbox{$\beta\in(\frac{d}{2},2)$} .
\end{aligned}
\end{align}

Finally, by comparing \eqref{SPDE-mild} with \eqref{mollified-problem}, the following error estimate can be shown easily: 
\begin{align}\label{mollified-converg}
\E\| u(t) - u_\varepsilon(t) \|_{L^2}^2
\lesssim 
\int_{0}^{t} \E\| u(s) - u_\varepsilon(s) \|_{L^2}^2\, \d s
+ \int_{0}^{t} \E\| u(s) - \phi_\varepsilon *\Lambda u(s) \|_{L^2}^2 \, \d s .
\end{align}
Since $\lim\limits_{\varepsilon\rightarrow 0}\| u(s) - \phi_\varepsilon *\Lambda u(s) \|_{L^2}^2 = 0$ for almost all $(\omega,t)\in \Omega\times(0,T)$, and $\| u(s) - \phi_\varepsilon *\Lambda u(s) \|_{L^2}^2\lesssim \| u(s) \|_{L^2}^2$ with $\| u(s) \|_{L^2}^2$ integrable on $\Omega\times(0,T)$, by the Lebesgue dominated convergence theorem we have 
$$
\lim_{\varepsilon\rightarrow 0 } \int_{0}^{t} \E\| u(s) - \phi_\varepsilon *\Lambda u(s) \|_{L^2}^2 \, \d s = 0. 
$$
This, together with \eqref{mollified-converg} and the standard deterministic Gronwall's inequality, implies that $u_\varepsilon$ converges to $u$ in $C([0, T]; L^2(\Omega;X))$ as $\varepsilon\rightarrow 0$. 
Since $u_\varepsilon$ satisfies the estimates in \eqref{u_epsilon_H^beta} with right-hand sides independent of $\varepsilon$, by passing to the limit $\varepsilon\rightarrow 0$, the limit function $u$ also satisfies these estimates. This proves Proposition \ref{thm-u-stability} rigorously. 
\end{remark}

\section*{Appendix B: Proof of several technical lemmas}$\,$
\renewcommand{\theequation}{B.\arabic{equation}}
\renewcommand{\thelemma}{B.\arabic{lemma}}
\setcounter{equation}{0}

\medskip
\noindent{\bf Proof of Lemma~\ref{lem-IA-sta}.} 
By Korn's inequality \cite[Theorem 2.4]{john2002analysis}, the following equivalence relation holds: 
\begin{align}\label{az-bilinear-H1}
\|v\|_{L^2}^2 +  \|\D(v)\|_{L^2}^2 \sim \|v\|_{H^1}^2 \qquad \forall\, v \in  H^1(D)^d \cap X .
\end{align}
This implies the coercitivity of the operator $I+A$ and the existence of an inverse operator $(I+A)^{-1}:X\rightarrow X\cap H^1(D)^d$ 
(see the Lax--Milgram Lemma in \cite[Theorem 6.2-1]{ciarlet2013linear}). 
Since $X\cap H^1(D)^d$ is compactly embedded into $X$, it follows that $(I+A)^{-1}$ is compact. 
For such a symmetric positive operator $I+A$ with compact inverse, its fractional powers can be defined by means of the spectral decomposition \cite[Appendix B.2]{kruse2014strong}. 

The $H^2$ elliptic regularity of the Stokes equations implies that $(I+A)^{-1}:X\rightarrow X\cap H^2(D)^d$, which implies $\|u\|_{H^2}\lesssim \|(I+A)u\|_{L^2}$ for $u\in D(A)$. 
Since $\|(I+A)u\|_{L^2}\lesssim \|u\|_{H^2}$ for $u\in D(A)$, it follows that the statement in \eqref{IA-eqi-norms} holds for $s = 2$. The intermediate case for $s \in (0, 2)$ follows by the real interpolation between the two endpoint cases $s=0$ and $s=2$. 

\eqref{az-bilinear-H1} implies $\|(I+A)^{\frac{1}{2}}v\|_{L^2} \sim \|v\|_{H^1}$ for $v\in D(A)$, i.e., the two norms 
$\|\cdot\|_{D(A^{\frac12})} $ and $ \|\cdot\|_{H^1}$ are equivalent on $D(A)$. Since $D(A)$ is dense in $H^1(D)^d \cap X$, it follows that 
$$
D(A^{\frac12}) 
= \mbox{closure of $D(A)$ under the norm $\|\cdot\|_{D(A^{\frac12})}\sim  \|\cdot\|_{H^1}$} = H^1(D)^d \cap X .
$$
Therefore, for $s\in(0,1)$ we have 
\begin{align*}
D(A^{\frac{s}{2}}) 
&= \mbox{closure of $D(A^{\frac12}) $ under the norm $\|\cdot\|_{D(A^{\frac{s}{2}})}$} \\
&= \mbox{closure of $H^1(D)^d \cap X$ under the norm $\|\cdot\|_{H^s}$} 
= H^s(D)^d \cap X .
\end{align*}
%
The statement in \eqref{IA-eqi-norms-dual} follows from the duality argument. 
\hfill\endproof
\bigskip

\noindent{\bf Proof of Lemma~\ref{lem-A:sg-sta}.} 
Let $\widetilde E(t):=e^{-t(I+A)}$ be the semigroup generated by $I+A$, which has a compact inverse operator. {Then \cite[Lemma B.9]{kruse2014strong} is applicable} and gives the estimates in \eqref{A:sg-sta1}--\eqref{A:sg-sta4} with $E(t)$ replaced by $\widetilde E(t)$ therein. Since $E(t)=e^t \widetilde E(t)$ and $t\in(0,T]$, it follows that \eqref{A:sg-sta1}--\eqref{A:sg-sta4} also hold for $E(t)$. 
%
\hfill\endproof

\bigskip
\noindent{\bf Proof of Lemma~\ref{lem:FEM-errs-w}.} 
Subtracting \eqref{pde-wh} from \eqref{pde-w} yields
\begin{equation}
\label{w-err-eqn}
\left \{
\begin{aligned} 
(z(w - w_h), v_h) + 2\big(\D(w - w_h), \D( v_h)\big) - (p - p_h,  \nabla \cdot v_h)&= 0 &&  \forall\, v_h \in V_h ,\\
(\nabla\cdot (w-w_h), q_h)&=0&& \forall \, q_h \in Q_h.
\end{aligned}
\right .
\end{equation}
Let $e_h = P_{X_h}w - w_h \in X_h$. Choosing $(v_h, q_h) = (e_h, P_{Q_h} p - p_h) \in X_h \times Q_h$ in \eqref{w-err-eqn}, where  $P_{Q_h}$ denotes the $L^2$-orthogonal projection onto $Q_h$, we have
\begin{align} 
z\|e_h\|_{L^2}^2 + 2\|\D( e_h)\|_{L^2}^2 = -2\big(\D(w - P_{X_h}w), \D(e_h)\big) + (p - P_{Q_h} p, \nabla \cdot e_h) .
\end{align}
By considering the imaginary and the real parts of the above equality, separately, we obtain
\begin{equation*}
\left \{
\begin{aligned}  
|{\rm Im} (z)| \|e_h\|_{L^2}^2  \lesssim \varepsilon ^{-1}\|w - P_{X_h}w\|_{H^1}^2 + \varepsilon \|\D( e_h)\|_{L^2}^2 + \|p - P_{Q_h} p\|_{L^2},\\
{\rm Re} (z)\|e_h\|_{L^2}^2 + 2\|\D( e_h)\|_{L^2}^2 \lesssim  \varepsilon^{-1}\|w - P_{X_h}w\|_{H^1}^2 + \varepsilon \|\D( e_h)\|_{L^2}^2 + \|p - P_{Q_h} p\|_{L^2} , 
\end{aligned}
\right .
\end{equation*}
where $\varepsilon>0$ can be arbitrarily small. 
For $z \in \Sigma_\phi:=\{1+z'\in\C: |{\rm arg}(z')|<\phi\}$, with some $\phi\in(0,\pi)$, we have $|z|\gtrsim 1$ 
and $|{\rm Re}(z)| \lesssim |{\rm Im}(z)|$. 
By using \eqref{Fortin-Pih-conv}, \eqref{app-pro-q},  \eqref{H2-app-PXh2} and Korn's inequality $\| u\|_{H^1}^2 \lesssim \|u\|_{L^2}^2+\|\D(u)\|_{L^2}^2$, choosing a sufficiently small $\varepsilon$ in the above estimates, we can derive the following result:    
\begin{align} \label{w-err-L2H1-z}
|z|^{\frac12} \|P_{X_h}w - w_h\|_{L^2} + \|\nabla (P_{X_h}w - w_h)\|_{L^2}
&\lesssim \|w - P_{X_h}w\|_{H^1} + \|p - P_{Q_h} p\|_{L^2} \notag \\
&\lesssim h(\|w\|_{H^2} +  \|p\|_{H^1}), 
\end{align}
which, together with triangle inequality  and \eqref{H2-app-PXh2}, implies that 
\begin{align}\label{w-err-H1}
\|\nabla (w - w_h)\|_{L^2} \lesssim h(\|w\|_{H^2} +  \|p\|_{H^1}).
\end{align}

In order to obtain an estimate for $\| w - w_h\|_{L^2} $, we consider the following duality argument. Let $\phi \in \DD(A)$ and $\phi_h \in X_h$ be the solution of the linear Stokes equations
\begin{equation}\label{phi-dual}
\left \{
\begin{aligned}  
z\phi - \nabla \cdot \T(\phi, \eta) &= w - w_h \qquad &&{\rm in}\, \, D,\\
\nabla \cdot \phi &= 0 \qquad &&{\rm in}\, \, D,\\
\T(\phi, \eta)\n &= 0 \qquad &&{\rm on}\, \, \partial D,
\end{aligned}
\right .
\end{equation}
and its finite element weak formulation 
\begin{equation}
\label{pde-phih}
\left \{
\begin{aligned} 
(z\phi_h, v_h) +  2\big(\D( \phi_h), \D( v_h)\big) -(\eta_h,  \nabla \cdot v_h)&= (w -w_h, v_h) &&  \forall \, v_h \in V_h ,\\
(\nabla\cdot \phi_h, q_h)&=0&& \forall \, q_h \in Q_h .\\
\end{aligned}
\right.
\end{equation}
Testing \eqref{phi-dual} by $\phi \in X$ and considering the imaginary and real parts of the results, we can obtain the following result similarly as \eqref{w-err-L2H1-z}:  
\begin{align*}
|z|\|\phi\|_{L^2}^2 + 2\|\nabla \phi\|_{L^2}^2  \le C |z|^{-1}\|w - w_h\|_{L^2}^2 
+ \frac12 |z|\|\phi\|_{L^2}^2 ,
\end{align*}
which implies
\begin{align}
|z|\|\phi\|_{L^2} \lesssim \|w - w_h\|_{L^2}.
\notag
\end{align}
{Through the above estimate and the $H^2$ estimate of the Stokes equations in \eqref{stokes-sta-H2}, we obtain}
\begin{align}
\label{phi-sta-H2}
\|\phi\|_{H^2} + \|\eta\|_{H^1} \lesssim \|w - w_h\|_{L^2} + (1+|z|)\|\phi\|_{L^2}
\lesssim \|w - w_h\|_{L^2} , 
\end{align}
where the last inequality uses property $|z|\gtrsim 1$ for $z \in \Sigma_\phi$ and the estimate of $|z|\|\phi\|_{L^2}$ above. 
Similar to \eqref{w-err-L2H1-z}--\eqref{w-err-H1}, we have
\begin{align} \label{phi-err-H1}
|z|^{\frac12}\|P_{X_h}\phi - \phi_h\|_{L^2} +\|\nabla (\phi - \phi_h)\|_{L^2}
\lesssim h(\|\phi\|_{H^2} +  \|\eta\|_{H^1}) 
\lesssim h\|w - w_h\|_{L^2},
\end{align}
where the last inequality follows from \eqref{phi-sta-H2}. 

Since $(\nabla \cdot v_h, P_{Q_h} \eta) = 0$ for $v_h \in X_h$, it follows that 
\begin{align*}
- ( \eta, \nabla \cdot v_h) = 
- ( \eta - P_{Q_h}  \eta, \nabla \cdot v_h)  \quad \forall \, v_h \in X_h.
\end{align*}
Since $\phi_h \in X_h$, we can derive the following equation from \eqref{w-err-eqn}: 
\begin{align*} 
(z\phi_h, w -w_h) +  2\big(\D( \phi_h), \D(w -w_h)\big) - (\nabla \cdot \phi_h,  p -P_{Q_h} p)&= 0.
\end{align*}
Then, testing the first relation of \eqref{phi-dual} by $w - w_h$ and using the two equations above, 
by utlizing \eqref{app-pro-q} and \eqref{H2-app-PXh2}--\eqref{H2-app-PXh1} we obtain 
\begin{align*}
\|w -w_h\|_{L^2}^2 =\,& z(\phi-\phi_h, w -w_h) +  2\big(\D(\phi-\phi_h), \D (w -w_h)\big)\\
& - (\nabla \cdot(\phi-\phi_h), p - P_{Q_h} p)  - (\eta - P_{Q_h} \eta, \nabla \cdot (w -w_h))\\
\lesssim\,& |z|^{\frac 12}\|\phi-\phi_h\|_{L^2} |z|^{\frac 12}\|w -w_h\|_{L^2} 
+ \|\nabla(\phi-\phi_h)\|_{L^2} \|\nabla (w -w_h)\|_{L^2}\\
&
+ h^2\|\phi\|_{H^2} \|p\|_{H^1} 
+ h\|\eta\|_{H^1} \|\nabla ( w -w_h)\|_{L^2}\\
\lesssim\,& (h^2z+h|z|^{\frac 12})\|w -w_h\|_{L^2}^2
+ h\|w - w_h\|_{L^2}\|\nabla (w -w_h)\|_{L^2} \\
&+   h^2 (\|w\|_{H^2} +  \|p\|_{H^1})\|w - w_h\|_{L^2} 
\qquad\quad \mbox{(here \eqref{phi-sta-H2} and \eqref{phi-err-H1} are used)}\\
\lesssim\,& (h^2+h^3|z|^{\frac12}) \|w - w_h\|_{L^2}(\|w\|_{H^2} +  \|p\|_{H^1}) . 
\qquad \mbox{(here \eqref{w-err-L2H1-z} are used)} \notag
\end{align*}
When $|z|\le h^{-2}$, the inequality above reduces to 
\begin{align}\label{w-err-L2-z}
\|w - w_h\|_{L^2} \lesssim h^2(\|w\|_{H^2} +  \|p\|_{H^1}).
\end{align}
When $|z|\ge h^{-2}$, \eqref{w-err-L2H1-z} immediately implies \eqref{w-err-L2-z}. 

Combining the estimates \eqref{w-err-L2H1-z} and \eqref{w-err-L2-z}, we obtain \eqref{pde-w-err}.
The proof of \eqref{reg-w-err} is similar as that for \eqref{phi-sta-H2}. 
\hfill\endproof

\section*{Appendix C: Proof of Lemma \ref{Lemma:uh-energy}}$\,$
\renewcommand{\theequation}{C.\arabic{equation}}
\renewcommand{\thelemma}{C.\arabic{lemma}}
\setcounter{equation}{0}

\begin{proof}
By iterating with respect to the time levels, the full discrete method in \eqref{fully-sFEM} can be rewritten as
\begin{equation}
\label{fully-sFEM-sum2}
u_h^n
=  \bar  E_{h,\tau}^n P_{X_h}u_0 +  \tau\sum_{i = 0}^{n-1} \bar E_{h, \tau}^{n-i} P_{X_h}f(t_{i + 1})
+ \sum_{i = 0}^{n-1} \bar E_{h, \tau}^{n-i} P_{X_h} [B(u_h^i) \Delta W_{i+1}]. 
\end{equation} 
The stability of $\bar E_{h,\tau}^n$, $1 \le n \le N$ together with \eqref{Ito_isometry} and \eqref{E_ht:sg-sta} yields
\begin{align}
\E\|u_h^n\|_{H^\frac 12}^2 
\lesssim&\, \E\|\bar E_{h,\tau}^n (I+A_h)^{\frac 14} P_{X_h}u^0\|_{L^2}^2 \notag\\
&\,+  \E\Big(\tau \sum_{i = 0}^{n-1}\|(I+A_h)^{\frac 14}\bar E_{h,\tau}^{n - i} P_{X_h} f(t_{i+1})\|_{L^2} \Big)^2 \notag \\
&\,+ \E \Big\|\sum_{i = 0}^{n-1} (I+A_h)^{\frac 14} \bar E_{h,\tau}^{n - i} P_{X_h} [B(u_h^i)\Delta W_{i+1}]\Big\|_{L^2}^2\notag\\
\lesssim&\, \E \|u^0\|_{H^{\frac12}}^2 
+  \E\Big(\tau \sum_{i = 0}^{n-1} t_n^{-\frac14}\|\bar E_{h,\tau}^{n - i} P_{X_h} f(t_{i+1})\|_{L^2} \Big)^2 \notag \\
&\,+ \tau\sum_{i = 0}^{n-1} \E\| (I+A_h)^{\frac 14} \bar E_{h,\tau}^{n - i} P_{X_h} B(u_h^i)\|_{ \L_2^0}^2\notag\\
\lesssim&\,  \big(1+ \E \|u^0\|_{H^{\frac12}}^2 \big) 
+ \tau\sum_{i = 0}^{n-1} t_{n-i}^{-\frac12}\E\|B(u_h^i)\|_{ \L_2^0}^2
\notag\\
\lesssim&\,  \big(1+ \E \|u^0\|_{H^{\frac12}}^2 \big) 
+ \tau\sum_{i = 0}^{n-1} (t_n - t_i)^{-\frac12}(1+\E\|u_h^i\|_{H^\frac12}^2).
\quad \mbox{(here \eqref{ass-con-AQ} is used)} 
\end{align}
By using the discrete version of generalized Gronwall's inequality in \cite[Lemma A.4]{kruse2014strong} (and mathematical induction on $n$), we obtain
\begin{align} \label{SPDE-uh-sta-L2}
\max_{1 \le n \le N}  \E \|u_h^n\|_{H^\frac 12}^2  \lesssim  1+ \E \|u^0\|_{H^{\frac12}}^2 .
\end{align}
This proves the desired estimate for the first term in \eqref{sFEM-uhn-sta}.

Setting $v_h= u_h^n \in X_h$ in \eqref{fully-sFEM-weak} and applying the identity $2(a-b)a  = |a|^2-|b|^2+|a-b|^2 $ for $a, b \in \R$ yield 
\begin{align}\label{uhn-sta-err}
\frac12 \|u_h^n&\|_{L^2}^2 + \frac12\|u_h^n - u_h^{n-1}\|_{L^2}^2
+2\tau\|\D( u_h^n)\|_{L^2}^2\\
\le\, &  \frac12 \|u_h^{n-1}\|_{L^2}^2 + \frac12 \tau \|f(t_n)\|_{L^2}^2
+ \frac12\tau \|u_h^n\|_{L^2}^2
+  (B(u_h^{n-1})\Delta W_n, u_h^n),  \notag
\end{align}
where the last term can be estimated by applying the property of a martingale $$\E(B(u_h^{n-1})\Delta W_n, u_h^{n-1}) = 0$$
and using \eqref{lem:B-noise-sta2} and \eqref{SPDE-uh-sta-L2} as follows: 
\begin{align}
\E(B(u_h^{n-1})\Delta W_n, u_h^n) 
=\,&\E(B(u_h^{n-1})\Delta W_n, u_h^n - u_h^{n-1}) \\
\le\, & C\E\|B(u_h^{n-1})\Delta W_n\|_{L^2}^2 + \frac14 \E\|u_h^n - u_h^{n-1}\|_{L^2}^2\notag\\
\le\, & C\tau \E\|B(u_h^{n-1})\|_{ \L_2^0}^2 + \frac14 \E\|u_h^n - u_h^{n-1}\|_{L^2}^2\notag\\
\le\, &   C\tau \big(1 + \E\|u_h^{n-1}\|_{H^{\frac12}}^2 \big) + \frac14 \E\|u_h^n - u_h^{n-1}\|_{L^2}^2\notag\\
\le\, &  C\tau \big(1 + \E\|u_h^0\|_{H^1}^2 \big)+ \frac14 \E\|u_h^n - u_h^{n-1}\|_{L^2}^2\notag.
\end{align}
By using Korn's inequality (cf. \cite[Theorem 2.4]{john2002analysis}) 
$\|u\|_{L^2}^2 + \|\D(u)\|_{L^2}^2\ge \theta(\|u\|_{L^2}^2 + \|\nabla u\|_{L^2}^2)$, where $\theta > 0$ is some constant, we obtain 
\begin{align}
\E\|u_h^N\|_{L^2}^2 + \sum_{n = 1}^{N}\E\|u_h^n - u_h^{n-1}\|_{L^2}^2
+\tau\sum_{n = 1}^{N}\E\|\nabla u_h^n\|_{L^2}^2
\lesssim  1 + \E\|u_h^0\|_{H^1}^2
+ \tau \sum_{n = 1}^N\E\|u_h^n\|_{L^2}^2 . 
\end{align}
Then applying the discrete Gronwall's inequality yields the desired estimate for the last two terms in \eqref{sFEM-uhn-sta}.
This proves Lemma \ref{Lemma:uh-energy}. 
\end{proof}

\bibliographystyle{abbrv}
\bibliography{NS-reference}

\begin{thebibliography}{10}

\bibitem{ABHN2011}
W.~Arendt, C.~J. Batty, M.~Hieber, and F.~Neubrander.
\newblock {\em {Vector-valued Laplace transforms and Cauchy problems}}.
\newblock Birkh\"auser, second edition, 2011.

\bibitem{arnold1984stable}
D.~N. Arnold, F.~Brezzi, and M.~Fortin.
\newblock A stable finite element for the {S}tokes equations.
\newblock {\em Calcolo}, 21(4):337--344, 1984.

\bibitem{bessaih2014splitting}
H.~Bessaih, Z.~Brze{\'z}niak, and A.~Millet.
\newblock Splitting up method for the 2{D} stochastic {N}avier--{S}tokes
  equations.
\newblock {\em Stoch. Partial Differ. Equ.-Anal. Comput.}, 2(4):433--470, 2014.

\bibitem{bessaih2019strongL2}
H.~Bessaih and A.~Millet.
\newblock Strong {$L^2$} convergence of time numerical schemes for the
  stochastic two-dimensional {N}avier--{S}tokes equations.
\newblock {\em IMA J. Numer. Anal.}, 39(4):2135--2167, 2019.

\bibitem{bessaih2021space}
H.~Bessaih and A.~Millet.
\newblock Space-time {E}uler discretization schemes for the stochastic 2{D}
  {N}avier--{S}tokes equations.
\newblock {\em Stoch. Partial Differ. Equ.-Anal. Comput.}, pages 1--44, 2021.

\bibitem{bessaih2022strong}
H.~Bessaih and A.~Millet.
\newblock Strong rates of convergence of space-time discretization schemes for
  the 2{D} {N}avier--{S}tokes equations with additive noise.
\newblock {\em Stochastics and Dynamics}, page 2240005, 2022.

\bibitem{breit2021convergence}
D.~Breit and A.~Dodgson.
\newblock Convergence rates for the numerical approximation of the 2{D}
  stochastic {N}avier--{S}tokes equations.
\newblock {\em Numer. Math.}, 147(3):553--578, 2021.

\bibitem{breitProhl2023error}
D.~Breit and A.~Prohl.
\newblock Error analysis for {2D} stochastic {N}avier--{S}tokes equations in
  bounded domains with {D}irichlet data.
\newblock {\em Found. Comput. Math.}, pages 1--30, 2023.

\bibitem{brzezniak2013finite}
Z.~Brze{\'z}niak, E.~Carelli, and A.~Prohl.
\newblock Finite-element-based discretizations of the incompressible
  {N}avier--{S}tokes equations with multiplicative random forcing.
\newblock {\em IMA J. Numer. Anal.}, 33(3):771--824, 2013.

\bibitem{Cao-Hong-Liu-2017}
Y.~Cao, J.~Hong, and Z.~Liu.
\newblock {Approximating Stochastic evolution equations with additive white and
  rough noises}.
\newblock {\em SIAM J. Numer. Anal.}, 55(4):1958--1981, 2017.

\bibitem{carelli2012time}
E.~Carelli, E.~Hausenblas, and A.~Prohl.
\newblock Time-splitting methods to solve the stochastic incompressible
  {S}tokes equation.
\newblock {\em SIAM J. Numer. Anal.}, 50(6):2917--2939, 2012.

\bibitem{carelli2012rates}
E.~Carelli and A.~Prohl.
\newblock Rates of convergence for discretizations of the stochastic
  incompressible {N}avier--{S}tokes equations.
\newblock {\em SIAM J. Numer. Anal.}, 50(5):2467--2496, 2012.

\bibitem{ciarlet2013linear}
P.~G. Ciarlet.
\newblock {\em Linear and nonlinear functional analysis with applications},
  volume 130.
\newblock Siam, 2013.

\bibitem{dauge1989stationary}
M.~Dauge.
\newblock Stationary {S}tokes and {N}avier--{S}tokes systems on two-or
  three-dimensional domains with corners. part i. linearized equations.
\newblock {\em SIAM J. Math. Anal.}, 20(1):74--97, 1989.

\bibitem{elliott1992error}
C.~M. Elliott and S.~Larsson.
\newblock Error estimates with smooth and nonsmooth data for a finite element
  method for the cahn-hilliard equation.
\newblock {\em Mathematics of Computation}, 58(198):603--630, 1992.

\bibitem{ern2004theory}
A.~Ern and J.-L. Guermond.
\newblock {\em Theory and practice of finite elements}, volume 159.
\newblock Springer, 2004.

\bibitem{fengProhl2021optimally}
X.~Feng, A.~Prohl, and L.~Vo.
\newblock Optimally convergent mixed finite element methods for the stochastic
  {S}tokes equations.
\newblock {\em IMA J. Numer. Anal.}, 41(3):2280--2310, 2021.

\bibitem{fengQiu2021analysis}
X.~Feng and H.~Qiu.
\newblock Analysis of fully discrete mixed finite element methods for
  time-dependent stochastic {S}tokes equations with multiplicative noise.
\newblock {\em J. Sci. Comput.}, 88(2):1--25, 2021.

\bibitem{fengVo2022analysis}
X.~Feng and L.~Vo.
\newblock Analysis of chorin-type projection methods for the stochastic
  {S}tokes equations with general multiplicative noise.
\newblock {\em Stoch. Partial Differ. Equ.-Anal. Comput.}, pages 1--38, 2022.

\bibitem{girault1979finite}
V.~Girault and P.-A. Raviart.
\newblock {\em Finite element approximation of the {N}avier-{S}tokes
  equations}, volume 749.
\newblock Springer Berlin, 1979.

\bibitem{girault1986finite}
V.~Girault and P.-A. Raviart.
\newblock {\em Finite element methods for {N}avier-{S}tokes equations: theory
  and algorithms}, volume~5.
\newblock Springer Science \& Business Media, 1986.

\bibitem{girault2003quasi}
V.~Girault and L.~R. Scott.
\newblock A quasi-local interpolation operator preserving the discrete
  divergence.
\newblock {\em Calcolo}, 40(1):1--19, 2003.

\bibitem{gottlieb1985eigenvalues}
H.~Gottlieb.
\newblock {Eigenvalues of the Laplacian with Neumann boundary conditions}.
\newblock {\em The ANZIAM Journal}, 26(3):293--309, 1985.

\bibitem{Heywood-Rannacher-1990}
J.~G. Heywood and R.~Rannacher.
\newblock {Finite element approximation of the nonstationary {N}avier-{S}tokes
  problem IV: Error analysis for second-order time discretization}.
\newblock {\em SIAM J. Numer. Anal.}, 27:353--384, 1990.

\bibitem{ingram-2013-new}
R.~Ingram.
\newblock {A new linearly extrapolated Crank-Nicolson time-stepping scheme for
  the {N}avier-{S}tokes equations}.
\newblock {\em Math. Comp.}, 82(284):1953--1973, 2013.

\bibitem{john2002analysis}
V.~John and W.~J. Layton.
\newblock Analysis of numerical errors in large eddy simulation.
\newblock {\em SIAM J. Numer. Anal.}, 40(3):995--1020, 2002.

\bibitem{Kellogg-Osborn-1976}
R.~B. Kellogg and J.~E. Osborn.
\newblock {A regularity for the {S}tokes problem in a convex polygon}.
\newblock {\em J. Funct. Anal.}, 21:397--431, 1976.

\bibitem{kruse2014optimal}
R.~Kruse.
\newblock Optimal error estimates of {G}alerkin finite element methods for
  stochastic partial differential equations with multiplicative noise.
\newblock {\em IMA J. Numer. Anal.}, 34(1):217--251, 2014.

\bibitem{kruse2014strong}
R.~Kruse.
\newblock {\em Strong and weak approximation of semilinear stochastic evolution
  equations}.
\newblock Springer, 2014.

\bibitem{kruse2012optimal}
R.~Kruse and S.~Larsson.
\newblock Optimal regularity for semilinear stochastic partial differential
  equations with multiplicative noise.
\newblock {\em Electron. J. Probab.}, 17:1--19, 2012.

\bibitem{layton-2014-numerical}
W.~Layton, N.~Mays, M.~Neda, and C.~Trenchea.
\newblock {Numerical analysis of modular regularization methods for the BDF2
  time discretization of the {N}avier-{S}tokes equations}.
\newblock {\em ESAIM: M2AN}, 48(3):765--793, 2014.

\bibitem{li2020explicit}
B.~Li.
\newblock An explicit formula for corner singularity expansion of the solutions
  to the {S}tokes equations in a polygon.
\newblock {\em Int. J. Numer. Anal. Model.}, 17(6):900--928, 2020.

\bibitem{Orlt-Sandig-1995}
{M. Orlt and A. M. S\"andig}.
\newblock {\em {Regularity of viscous Navier–Stokes flows in nonsmooth
  domains}}, pages 185--201.
\newblock Marcel Dekker, Inc., 1995.

\bibitem{marion-1998-navier}
M.~Marion and R.~Temam.
\newblock {{N}avier-{S}tokes equations: Theory and approximation}.
\newblock {\em Handbook of numerical analysis}, 6:503--689, 1998.

\bibitem{Mukam-Tambue-2020}
J.~D. Mukam and A.~Tambue.
\newblock {Strong convergence of the linear implicit Euler method for the
  finite element discretization of semilinear non-autonomous SPDEs driven by
  multiplicative or additive noise}.
\newblock {\em Appl. Numer. Math.}, 147:222--253, 2020.

\bibitem{Nochetto-Pyo-2004}
R.~H. Nochetto and J.-H. Pyo.
\newblock {Error estimates for semi-discrete gauge methods for the
  {N}avier--{S}tokes equations}.
\newblock {\em Math. Comp.}, 74:521--542, 2016.

\bibitem{shen-1992-error-1}
J.~Shen.
\newblock {On error estimates of projection methods for {N}avier--{S}tokes
  equations: first-order schemes}.
\newblock {\em SIAM J. Numer. Anal.}, 29(1):57--77, 1992.

\bibitem{shen-1992-error-2}
J.~Shen.
\newblock {On error estimates of some higher order projection and
  penalty-projection methods for {N}avier-{S}tokes equations}.
\newblock {\em Numer. Math.}, 62(1):49--73, 1992.

\bibitem{Shibata-Shimizu}
Y.~Shibata and S.~Shimizu.
\newblock {On a resolvent estimate for the Stokes system with Neumann boundary
  condition}.
\newblock {\em Differential and Integral Equations}, 16(4):385--426, 2003.

\bibitem{Stein1970}
E.~M. Stein.
\newblock {\em {Singular Integrals and Differentiability Properties of
  Functions}}.
\newblock Monographs in harmonic analysis. Princeton University Press, 1970.

\bibitem{taylor1973numerical}
C.~Taylor and P.~Hood.
\newblock A numerical solution of the {N}avier-{S}tokes equations using the
  finite element technique.
\newblock {\em Comput. Fluids}, 1(1):73--100, 1973.

\bibitem{thomee2007galerkin}
V.~Thom{\'e}e.
\newblock {\em {G}alerkin finite element methods for parabolic problems},
  volume~25.
\newblock Springer Science \& Business Media, 2007.

\bibitem{Wang-2017}
X.~Wang.
\newblock {Strong convergence rates of the linear implicit Euler method for the
  finite element discretization of SPDEs with additive noise}.
\newblock {\em IMA. J. Numer. Anal.}, 37:965--984, 2017.

\bibitem{Yan2005}
Y.~Yan.
\newblock {Galerkin finite element methods for stochastic parabolic partial
  differential equations}.
\newblock {\em SIAM J. Numer. Anal.}, 43:1363--1384, 2005.

\end{thebibliography}

\end{document}